\theoremstyle{plain}
\newtheorem{theorem}{Theorem}[section]
\newtheorem{proposition}[theorem]{Proposition}
\newtheorem{lemma}[theorem]{Lemma}
\theoremstyle{definition}
\newtheorem{definition}[theorem]{Definition}
\theoremstyle{remark}
\newtheorem*{remark}{Remark}
\theoremstyle{remark}
\numberwithin{equation}{section}
\DeclareMathOperator{\LCD}{LCD}
\DeclareMathOperator{\Spread}{Spread}
\def \N {\mathbb{N}}
\def \R {\mathbb{R}}
\def \Z {\mathbb{Z}}
\def \E {\mathbb{E}}
\def \P {\mathbb{P}}
\def \DD {\mathcal{D}}
\def \EE {\mathcal{E}}
\def \NN {\mathcal{N}}
\def \LL {\mathcal{L}}
\def \MM {\mathcal{M}}
\def \a {\alpha}
\def \b {\beta}
\def \g {\gamma}
\def \e {\varepsilon}
\def \d {\delta}
\def \l {\lambda}
\def \s {\sigma}
\def \t {\tau}
\def \< {\langle}
\def \> {\rangle}
\def \^ {\widehat}
\def \dist {{\rm dist}}
\def \Span {{\rm span}}
\def \vol {{\rm vol}}
\def \supp {{\rm supp}}
\def \Comp {{\mathit{Comp}}}
\def \Incomp {{\mathit{Incomp}}}
\newcommand{\pr}[2]{\langle {#1} , {#2} \rangle}
\newcommand{\norm}[1]{\left \| #1 \right \|}
\def \etc {,\ldots,}
\begin{document}

\title[]{The smallest singular value of a random rectangular matrix}

\author{Mark Rudelson
  \and Roman Vershynin}

\thanks{M.R. was supported by NSF DMS grants 0556151 and 0652684.
  R.V. was supported by the Alfred P.~Sloan Foundation
  and by NSF DMS grants 0401032 and 0652617.}

\address{Department of Mathematics,
   University of Missouri,
   Columbia, MO 65211, USA}
\email{rudelson@math.missouri.edu}

\address{Department of Mathematics,
   University of Michigan,
   Ann Arbor, MI 48109, USA}
\email{romanv@umich.edu}

\date{\today}

\begin{abstract}
  We prove an optimal estimate of the smallest singular value
  of a random subgaussian matrix, valid for all  dimensions.
  For an $N \times n$ matrix $A$ with
  independent and identically distributed subgaussian entries,
  the smallest singular value of $A$ is at least of the order
  $\sqrt{N} - \sqrt{n-1}$ with high probability.
  A sharp estimate on the probability is also obtained.
\end{abstract}

\maketitle

%\tableofcontents

\section{Introduction}
%-----------------------------------------------------

\subsection{Singular values of subgaussian matrices}
Extreme singular values of random matrices has been
of considerable interest in mathematical physics, geometric functional analysis,
numerical analysis and other fields.
Consider an $N \times n$ real matrix $A$ with $N \ge n$.
The {\em singular values} $s_k(A)$ of $A$ are the eigenvalues of $|A| =
\sqrt{A^tA}$ arranged in nonincreasing order. Of particular
significance are the largest and the smallest singular values
\begin{equation}                                \label{s1 sn}
  s_1(A) = \sup_{x:\; \|x\|_2 = 1} \|Ax\|_2, \qquad s_n(A) =
  \inf_{x:\; \|x\|_2 = 1} \|Ax\|_2.
\end{equation}

A natural matrix model is given by matrices whose entries are
independent real random variables with certain moment assumptions.
In this paper, we shall consider {\em subgaussian random variables}
$\xi$ -- those whose tails are dominated by that of the standard
normal random variable. Namely, a random variable $\xi$ is called
subgaussian if there exists $B>0$ such that
\begin{equation}   \label{subgaussian}
  \P (|\xi| > t) \le 2 \exp(-t^2/B^2) \qquad \text{for all $t > 0$}.
\end{equation}
The minimal $B$ in this inequality is called the {\em subgaussian
moment} of $\xi$. Inequality \eqref{subgaussian} is often
equivalently formulated as the moment condition
\begin{equation}    \label{subgaussian moments}
   (\E |\xi|^p )^{1/p} \le C B \sqrt{p} \qquad \text{for all $p \ge 1$},
\end{equation}
where $C$ is an absolute constant.
The class of subgaussian random variables includes many random variables
that arise naturally in applications, such as normal, symmetric $\pm 1$
and general bounded random variables.

In this paper, we study $N \times n$ real random matrices $A$
whose entries are independent and identically distributed mean zero subgaussian random
variables. The asymptotic behavior of the extreme singular values of $A$ is well understood.
If the entries have unit variance and the dimension $n$ grows to infinity while the aspect ratio $n/N$
converges to a constant $\l \in (0,1)$, then
\[
  \frac{s_1(A)}{\sqrt{N}} \to 1+ \sqrt{\l}, \qquad
  \frac{s_n(A)}{\sqrt{N}} \to 1- \sqrt{\l}
\]
almost surely. This result was proved in \cite{Si} for Gaussian
matrices, and in \cite{BY} for matrices with independent and identically
distributed entries with finite fourth moment. In other words, we have
asymptotically
\begin{equation}                                \label{limit}
  s_1(A) \sim \sqrt{N} + \sqrt{n}, \qquad
  s_n(A) \sim \sqrt{N} - \sqrt{n}.
\end{equation}

Considerable efforts were made recently to establish non-asymptotic
estimates similar to \eqref{limit}, which would hold for arbitrary
fixed dimensions $N$ and $n$; see the survey \cite{L} on the largest singular value,
and the discussion below on the smallest singular value.

Estimates in fixed dimensions are essential for many problems of geometric functional
analysis and computer science. Most often needed are upper bounds on the largest
singular value and lower bounds on the smallest
singular value, which together yield that $A$ acts as a nice
isomorphic embedding of $\R^n$ into $\R^N$.
Such bounds are often satisfactory even if they are known to hold up
to a constant factor independent of the dimension.

The largest singular value is relatively easy to bound above, up to a constant factor.
Indeed, a standard covering argument shows that
$s_1(A)$ is at most of the optimal order $\sqrt{N}$ for all
fixed dimensions, see Proposition~\ref{norm} below.
The smallest singular value is significantly harder to control.
The efforts to prove optimal bounds on $s_n(A)$ have a long history, which
we shall now outline.

\subsection{Tall matrices}

A result of \cite{BDGJN} provides an optimal bound for
tall matrices, those with aspect ratio $\l = n/N$ satisfies $\l < \l_0$
for some sufficiently small constant $\l_0 > 0$.
Recalling \eqref{limit},
one should expect that tall matrices satisfy
\begin{equation}                    \label{tall}
  s_n(A) \ge c \sqrt{N} \qquad \text{with high probability}.
\end{equation}
It was indeed proved in \cite{BDGJN} that for tall $\pm 1$ matrices one has
\begin{equation}                    \label{tall probability}
  \P \big( s_n(A) \le c \sqrt{N} \big)
  \le e^{-c N}
\end{equation}
where $\l_0 > 0$ and $c > 0$ are absolute constants.

\subsection{Almost square matrices}

As we move toward square matrices, thus making the aspect ratio $\l = n/N$
arbitrarily close to $1$, the problem of estimating the smallest singular value becomes
harder. One still expects \eqref{tall} to be true as long as $\l < 1$ is any constant.
Indeed, this was proved in \cite{LPRT} for arbitrary
aspect ratios $\l < 1 - c/\log n$ and for general random matrices
with independent subgaussian entries. One has
\begin{equation}  \label{constant proportion}
  \P \big( s_n(A) \le c_\l \sqrt{N} \big)
  \le e^{-c N},
\end{equation}
where $c_\l > 0$ depends only on $\l$ and the maximal subgaussian moment
of the entries.

In subsequent work \cite{AFMS}, the dependence of $c_\l$ on the
aspect ratio in \eqref{constant proportion} was improved for random
$\pm 1$ matrices; however the probability estimate there was weaker
than in \eqref{constant proportion}.   An estimate for subgaussian
random matrices of all dimensions was obtained in \cite{R2}. For any
$\e \ge CN^{-1/2}$, it was shown that
  \[
    \P \big( s_n(A) \le \e (1-\l)(\sqrt{N} - \sqrt{n}) \big)
    \le (C\e)^{N-n} + e^{-cN}.
  \]
However, because of the factor $(1-\l)$, this estimate is suboptimal
and does not correspond to the expected asymptotic behavior \eqref{limit}.

\subsection{Square matrices}

The extreme case for the problem of estimating the singular value is
for the square matrices, where $N=n$. Asymptotic \eqref{limit} is
useless for square matrices. However, for ``almost'' square
matrices, those with constant defect $N - n = O(1)$, the quantity
$\sqrt{N} - \sqrt{n}$ is of order $1/\sqrt{N}$, so asymptotics
\eqref{limit} heuristically suggests that these matrices should
satisfy
\begin{equation}                    \label{square}
  s_n(A) \ge  \frac{c}{\sqrt{N}}
  \qquad \text{with high probability}.
\end{equation}
This conjecture was proved recently in \cite{RV} for all square subgaussian
matrices:
\begin{equation}                    \label{square probability}
  \P \Big( s_n(A) \le \frac{\e}{\sqrt{N}} \Big)
  \le C\e +  e^{-cN}.
\end{equation}

\subsection{New result: bridging all classes of matrices}

In this paper, we prove the conjectural bound for $s_n(A)$ valid for all
subgaussian matrices in all fixed dimensions $N, n$. The bound is optimal
for matrices with all aspect ratios we encountered above.

\begin{theorem}             \label{t: subgaussian}
  Let $A$ be an $N \times n$ random matrix, $N \ge n$, whose elements
  are independent copies of a mean zero subgaussian random variable with unit variance.
  Then, for every $\e > 0$, we have
  \begin{equation}                                  \label{eq rectangular subgaussian}
    \P \Big( s_n(A) \le \e \big (\sqrt{N} - \sqrt{n-1} \big ) \Big)
    \le (C \e)^{N-n+1} + e^{-cN}
  \end{equation}
  where $C, c > 0$ depend (polynomially)
  only on the subgaussian moment $B$.
\end{theorem}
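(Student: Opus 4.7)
The plan is to combine the compressible/incompressible decomposition of $S^{n-1}$ from \cite{RV} with a sharpened distance-to-subspace estimate that captures the rectangular defect $N-n+1$ in the exponent.

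First, I would split $S^{n-1}=\Comp(\delta,\rho)\cup\Incomp(\delta,\rho)$, where $\Comp$ consists of vectors within Euclidean distance $\rho$ of the set of $\delta n$-sparse vectors. For $x\in\Comp$, the matrix $A$ essentially restricts to a coordinate subspace of dimension at most $\delta n$, where it is very tall. The tall-matrix bound \eqref{tall probability}, combined with a union bound over supports and an $\e$-net on each, shows that $\P(\inf_{x\in\Comp}\|Ax\|_2\le c(\sqrt{N}-\sqrt{n-1}))\le e^{-cN}$, with $\delta$ and $\rho$ chosen small enough to absorb the combinatorial factor $\binom{n}{\delta n}$. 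This accounts for the $e^{-cN}$ term in the final bound.

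For the incompressible part, the starting point is the inequality $\|Ax\|_2\ge|x_k|\,\dist(A_k,H_k)$, where $A_k$ denotes the $k$-th column of $A$ and $H_k=\Span(A_j:j\ne k)$, together with the fact that $x\in\Incomp$ has a constant fraction of coordinates satisfying $|x_k|\gtrsim 1/\sqrt n$. A suitable averaging reduction (of the type developed in \cite{RV}, which avoids paying a union-bound factor of $n$ by exploiting the symmetry of the columns) turns the infimum over $\Incomp$ into a single-column small-ball estimate on $\dist(A_k,H_k)$ at a scale proportional to $\sqrt n(\sqrt N-\sqrt{n-1})$.

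The main obstacle is proving the sharp distance estimate with the \emph{correct} exponent $N-n+1$. Conditioning on $H_k$, the distance equals $\|P_{H_k^\perp}A_k\|_2$, the norm of a subgaussian vector projected onto a random $(N-n+1)$-dimensional subspace. A scalar Littlewood--Offord inequality (as in the square case $N-n+1=1$ of \cite{RV}) would only produce a factor of $C\e$; obtaining $(C\e)^{N-n+1}$ requires a tensorized small-ball inequality, which succeeds only when $H_k^\perp$ is ``generic'' in the sense that its unit vectors have large least common denominator (LCD). The core technical step is therefore to show that with probability $1-e^{-cN}$, every unit vector in $H_k^\perp$ has large LCD; this in turn requires running the compressible/incompressible dichotomy a second time on vectors in $\R^N$ normal to $H_k$, combined with anti-concentration estimates to rule out small LCD. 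This LCD structural statement is what cleanly interpolates between the square regime (where single-vector LCD suffices) and the tall regime (where the net argument alone would do), and is what allows the rectangular defect to enter the exponent. Assembling the compressible bound, the incompressible averaging, and this sharp distance estimate then yields Theorem \ref{t: subgaussian}, with the polynomial dependence of $C,c$ on $B$ inherited from the subgaussian tail bounds, net cardinalities, and small-ball inequalities used throughout.
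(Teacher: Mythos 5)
Your high-level architecture --- the compressible/incompressible split, reduction to a distance problem, a multi-dimensional small-ball inequality valid when the random normal subspace has large LCD, and a structure theorem asserting that random subspaces have exponentially large LCD --- is the same as the paper's (Lemma~\ref{l: compressible}, Theorems~\ref{SBP}, \ref{distance general}, \ref{LCD random}, \ref{distance}). The genuine gap is in the reduction from $s_n(A)$ to a distance. You reduce via $\|Ax\|_2\ge |x_k|\,\dist(A_k,H_k)$ with $|x_k|\sim 1/\sqrt{n}$, so to reach the scale $\e(\sqrt{N}-\sqrt{n-1})$, which is of order $\e d/\sqrt{n}$ with $d:=N-n+1$ in the regime $d\le c_0 n$ where the theorem is new, you need the intermediate estimate $\P\big(\dist(A_k,H_k)<c\,\e d\big)\le (C\e)^d+e^{-cN}$ (this is your ``scale proportional to $\sqrt{n}(\sqrt{N}-\sqrt{n-1})$''). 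That intermediate estimate is false, even for Gaussian matrices: $H_k^\perp$ has dimension $d$, so $\dist(A_k,H_k)$ is the norm of a $d$-dimensional Gaussian vector, and $\P\big(\dist(A_k,H_k)<r\big)\asymp (cr/\sqrt{d})^{d}$ for $r\le\sqrt{d}$; at $r\sim\e d$ this is $\asymp(c\e\sqrt{d})^{d}$, which exceeds $(C\e)^d$ as soon as $d$ is larger than a constant. No sharpening of the LCD/anti-concentration machinery can remove this $d^{d/2}$ loss, since it is present already in the ``ideal'' Gaussian case; the single-column route can only give a bound of the form $(C\e\sqrt{N-n+1}\,)^{N-n+1}+e^{-cN}$, which degenerates as $d\to\infty$. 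This is precisely why the paper does not use single columns: for incompressible $x$ it chooses a random $d$-element set $J$ with $\|P_Jx\|_2\gtrsim\sqrt{d/n}$ and uses $\|Ax\|_2\ge\|P_Jx\|_2\,\dist(Az,H_{J^c})$ with $z=P_Jx/\|P_Jx\|_2\in\Spread_J$ (Lemmas~\ref{total spread} and \ref{l: via dist}); the extra $\sqrt{d}$ carried by $\|P_Jx\|_2$ lets the distance (now to a subspace of codimension $2d-1$) be tested at its natural scale $t\sqrt{d}$, which is what produces the exponent $(Ct)^d$ at the correct threshold.

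A second point your plan does not meet, but any correct repair of the first gap will: once the test object is $d$-dimensional, the distance bound must hold uniformly over $z\in\Spread_J$. The naive net-plus-approximation argument requires control of $\|W\|$ for $W=P_{H_{J^c}^\perp}A|_{\R^J}$, and $\P(\|W\|>K_0\sqrt{d})$ is only about $e^{-d}$, so approximation degrades the additive error from $e^{-cN}$ to $e^{-cd}$, which is useless for nearly square matrices (the square case is $d=1$). The paper bypasses this with the decoupling argument of Section~\ref{s: second}: it stratifies over dyadic levels of $\|W\|$ and, for each level, decouples the event $\{\|Wz\|_2\ \text{small}\}$ from $\{\|W\|\ \text{large}\}$ by splitting the columns indexed by $J$, which is possible exactly because vectors of $\Spread_J$ have all coordinates comparable to $1/\sqrt{d}$. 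Your appeal to the averaging argument of \cite{RV} stays at the single-column level and so never confronts this uniformity issue, but it is an essential ingredient once the reduction is fixed.
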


For {\em tall matrices}, Theorem \ref{t: subgaussian} clearly amounts to the
known estimates \eqref{tall}, \eqref{tall probability}.
For {\em square matrices} ($N=n$), the quantity
$\sqrt{N} - \sqrt{N-1}$ is of order $1/\sqrt{N}$, so
Theorem \ref{t: subgaussian} amounts to the known estimates
\eqref{square}, \eqref{square probability}.
Finally, for matrices that are arbitrarily {\em close to square},
Theorem \ref{t: subgaussian} yields the new optimal estimate
\begin{equation}                    \label{no limit}
  s_n(A) \ge c( \sqrt{N} - \sqrt{n} )
  \qquad \text{with high probability}.
\end{equation}
This is a version of the asymptotics \eqref{limit}, now valid for all
fixed dimensions. This bound was explicitly conjectured e.g. in \cite{V}.

\medskip

Theorem~\ref{t: subgaussian} seems to be new even for Gaussian matrices.
Some early progress was made by Edelman \cite{E 88}
and Szarek \cite{Sz} who in particular proved \eqref{square probability} for Gaussian matrices,
see also the subsequent work by Edelman and Sutton \cite{ES}.
Gordon's inequality \cite{G} can be used to prove that, for Gaussian matrices,
$\E s_n(A) \ge \sqrt{N} - \sqrt{n}$, see Theorem II.13 in \cite{DS}.
One can further use the concentration of measure inequality on the
Euclidean sphere to estimate the probability as
$$
\P \big( s_n(A) \le \sqrt{N} - \sqrt{n} - t \big)
    \le e^{-t^2/2}, \qquad t > 0.
$$
However, this bound is not optimal, and it becomes useless
for matrices that are close to square, when $N - n = o(\sqrt{n})$.

\medskip

The form of estimate \eqref{eq rectangular subgaussian} may be expected if one recalls
the classical $\e$-net argument, which underlies many proofs in geometric
functional analysis. By \eqref{s1 sn}, we are looking for a lower bound on
$\|Ax\|$ that would hold uniformly for all vectors $x$ on the unit Euclidean sphere
$S^{n-1}$. For every fixed $x \in S^{n-1}$, the quantity $\|Ax\|_2^2$ is the sum of $N$
independent random variables (the squares of the coordinates of $Ax$).
Therefore, the deviation inequalities make us to expect that $\|Ax\|_2$ is of the order
$\sqrt{N}$ with probability exponential in $N$, i.e. $1-e^{-cN}$. We can run this
argument separately for each vector $x$ in a small net $\NN$ of the sphere $S^{n-1}$, and
then take the union bound to make the estimate uniform over $x \in \NN$.
It is known how to choose a net $\NN$ of cardinality exponential in the dimension $n-1$ of the sphere,
i.e. $|\NN| \le e^{C(n-1)}$. Therefore, with probability $1 - e^{C(n-1)} e^{-cN}$, we have
a good lower bound on $\|Ax\|_2 \sim \sqrt{N}$ for all vectors $x$ in the net $\NN$. Finally, one transfers
this estimate from the net to the whole sphere $S^{n-1}$ by approximation.

The problem with this argument is that the constants $C$ and $c$ are not the same.
Therefore, our estimate on the probability $1 - e^{C(n-1)} e^{-cN}$ is positive only for tall matrices,
when $N \ge (C/c) n$. To reach out to matrices of arbitrary dimensions, one needs
to develop much more sensitive versions of the $\e$-net arguments. Nevertheless,
the end result stated in Theorem~\ref{t: subgaussian} exhibits the same two forces played
against one another -- the probability quantified by the dimension $N$ and the complexity of the sphere
$S^{n-1}$ quantified by its dimension $n-1$.

\subsection{Small ball probabilities, distance problems, and additive structure}

Our proof of Theorem~\ref{t: subgaussian} is a development of our method in \cite{RV}
for square matrices. Dealing with rectangular matrices is in several ways considerably
harder. Several new tools are developed in this paper, which may be of independent
interest.

\medskip

One new key ingredient is a {\em small ball probability} bound for
sums of independent random vectors in $\R^d$. We consider the sum $S
= \sum_k a_k X_k$ where $X_k$ are i.i.d. random variables and $a_k$
are real coefficients. We then estimate the probability that such
sum falls into a given small Euclidean ball in $\R^d$. Useful upper
bounds on the small ball probability must depend on the additive
structure of the coefficients $a_k$. The less structure the
coefficients carry, the more spread the distribution of $S$ is, so
the smaller is the small ball probability. Our treatment of small
ball probabilities is a development of the Littlewood-Offord theory
from \cite{RV}, which is now done in arbitrary dimension $d$ as
opposed in $d=1$ in \cite{RV}. While this paper was being written,
Friedland and Sodin \cite{FS} proposed two different ways to
simplify and improve our argument in \cite{RV}. With their kind permission,
we include in Section~\ref{s: sbp} a multi-dimensional version of an
unpublished argument of Friedland and Sodin \cite{FS private}, which is
considerably simpler than our original proof.

\medskip

We use small the ball probability estimates to prove an optimal
bound for the distance problem: {\em how close is a random vector
from an independent random subspace?} Consider a vector $X$ in
$\R^N$ with independent identically distributed coordinates and a
subspace $H$ spanned by $N-m$ independent copies of $X$. In
Section~\ref{s: distance}, we show that the distance is at least of
order $\sqrt{m}$ with high probability, and we obtain the sharp
estimate on this probability:
\begin{equation}                    \label{intro dist}
  \P \big( \dist(X,H) < \e \sqrt{m} \big)
  \le (C\e)^m + e^{-cN}.
\end{equation}
This bound is easy for a standard normal vector $X$ in $\R^N$, since $\dist(X,H)$
is in this case the Euclidean norm of the standard normal vector in $\R^m$.
However, for discrete distributions, such as for $X$ with $\pm 1$ random coordinates,
estimate \eqref{intro dist} is non-trivial.
In \cite{RV}, it was proved for $m=1$; in this paper we extend
the distance bound to all dimensions.

To prove \eqref{intro dist}, we first use the small ball probability inequalities
to compute the {\em distance to an arbitrary subspace $H$}. This estimate
necessarily depends on the additive structure of the subspace $H$; the less
structure, the better is our estimate, see Theorem~\ref{distance general}.
We then prove the intuitively plausible fact that
{\em random subspaces have no arithmetic structure},
see Theorem~\ref{LCD random}. This together leads to the desired distance
estimate \eqref{intro dist}.

\medskip

The distance bound is then used to prove our main result, Theorem~\ref{t: subgaussian}.
Let $X$ be some column of the random matrix $A$ and $H$ be the span of the
other columns. The simple rank argument shows that the smallest singular
value $s_n(A) = 0$ if and only if $X \in H$ for some column.
A simple quantitative version of this argument is that
a lower estimate on $s_n(A)$ yields a lower bound on $\dist(X,H)$.

In Section~\ref{s: via dist}, we show how to reverse this argument
for random matrices -- {\em deduce a lower bound on the smallest singular
value $s_n(A)$ from lower bound \eqref{intro dist} on the distance $\dist(X,H)$}.
Our reverse argument is harder than its version for square matrices
from \cite{RV}, where we had $m=1$. First, instead of one column $X$
we now have to consider all linear combinations of $d \sim m/2$ columns;
see Lemma~\ref{l: via dist}.
To obtain a distance bound that would be uniformly good for
all such linear combinations, one would normally use an $\e$-net argument.
However, the distance to the $(N-m)$-dimensional subspace $H$
is not sufficiently stable for this argument to be useful for small $m$
(for matrices close to square).
We therefore develop a decoupling argument in Section~\ref{s: uniform distance}
to bypass this difficulty.

Once this is done, the proof is quickly completed in Section~\ref{s: completion}.

\subsection*{Acknowledgement}
We are grateful to Shuheng Zhou, Nicole Tomczak-Jae\-ger\-mann, Radoslaw Adamczak,
and the anonymous referee
for pointing out several inaccuracies in our argument.
The second named author is grateful for his wife Lilia for
her love and patience during the years this paper was being written.

\section{Notation and preliminaries}
%-----------------------------------------------------

Throughout the paper, positive constants are denoted $C, C_1, C_2, c, c_1, c_2, \ldots$
Unless otherwise stated, these are absolute constants. In some of our arguments
they may depend (polynomially) on specified parameters, such as the subgaussian moment $B$.

The canonical inner product on $\R^n$ is denoted $\< \cdot , \cdot
\> $, and the Euclidean norm on $\R^n$ is denoted $\|\cdot\|_2$. The
Euclidean distance from a point $a$ to a subset $D$ in $\R^n$ is
denoted $\dist(a,D)$. The Euclidean ball of radius $R$ centered at a
point $a$ is denoted $B(a,R)$. The unit Euclidean sphere centered at
the origin is denoted $S^{n-1}$. If $E$ is a subspace of $\R^n$, its
unit Euclidean sphere is denoted $S(E) := S^{n-1} \cap E$.

The orthogonal projection in $\R^n$ onto a subspace $E$ is denoted $P_E$.
For a subset of coordinates $J \subseteq \{1,\ldots,n\}$, we sometimes
write $P_J$ for $P_{\R^J}$ where it causes no confusion.

\subsection{Nets}
%.....................

Consider a subset $D$ of $\R^n$, and let $\e > 0$.
Recall that an $\e$-net of $D$ is a subset $\NN \subseteq D$ such that
for every $x \in D$ one has $\dist(x,\NN) \le \e$.

The following Lemma is a variant of the well known volumetric estimate.

\begin{proposition}[Nets]                     \label{nets}
  Let $S$ be a subset of $S^{n-1}$, and let $\e > 0$.
  Then there exists an $\e$-net of $S$ of
  cardinality at most
  $$
  2n \Big( 1 + \frac{2}{\e} \Big)^{n-1}.
  $$
\end{proposition}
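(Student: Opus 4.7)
The plan is to use the standard volumetric/packing argument, but with one crucial refinement: instead of comparing volumes against the ball $B(0,1+\varepsilon/2)$ (which would yield the weaker bound $(1+2/\varepsilon)^n$), we compare against the thin spherical annulus in which the $\varepsilon/2$-balls around points of $S \subset S^{n-1}$ must lie. That extra saving of one dimension is where the factor $(1+2/\varepsilon)^{n-1}$ (and the extra $n$) comes from.

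First I would take $\NN$ to be a maximal $\varepsilon$-separated subset of $S$. By maximality, $\NN$ is automatically an $\varepsilon$-net of $S$. By separation, the open Euclidean balls $B(x,\varepsilon/2)$, $x\in\NN$, are pairwise disjoint. Since each $x\in\NN\subset S^{n-1}$, the triangle inequality gives
\[
  B(x,\varepsilon/2)\subset A:=\{y\in\R^n : \,|\,\|y\|_2-1\,|\le \varepsilon/2\},
\]
so the disjoint union of these balls lies inside the annulus $A$.

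Comparing $n$-dimensional volumes and cancelling the constant $\vol(B_2^n)$, one obtains
\[
  |\NN|\,(\varepsilon/2)^n \le (1+\varepsilon/2)^n - \bigl((1-\varepsilon/2)_+\bigr)^n.
\]
Now I would apply the elementary inequality $a^n-b^n\le n(a-b)a^{n-1}$ valid for $a\ge b\ge 0$, with $a=1+\varepsilon/2$ and $b=(1-\varepsilon/2)_+$, so that $a-b\le \varepsilon$. This yields
\[
  |\NN|\le \frac{n\,\varepsilon\,(1+\varepsilon/2)^{n-1}}{(\varepsilon/2)^n}
        = 2n\bigl((1+\varepsilon/2)(2/\varepsilon)\bigr)^{n-1}
        = 2n\Bigl(1+\tfrac{2}{\varepsilon}\Bigr)^{n-1},
\]
as required. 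For $\varepsilon>2$ the set $(1-\varepsilon/2)_+=0$, so the inequality still holds (and in fact a single point of $S$ is already an $\varepsilon$-net, a cardinality $\le 2n$).

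There is no serious obstacle here. The only point that has to be noticed is that the packing balls of radius $\varepsilon/2$ around points of $S\subset S^{n-1}$ live not just inside $B(0,1+\varepsilon/2)$ but inside a thin annulus of width $\varepsilon$ around the unit sphere; that observation, together with the telescoping bound on $a^n-b^n$, automatically produces the $(n-1)$-dimensional volumetric factor and the prefactor $2n$.
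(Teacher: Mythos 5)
Your argument is correct and is essentially the paper's own proof: a maximal $\e$-separated subset of $S$, disjoint balls of radius $\e/2$ packed into the spherical shell $B(0,1+\e/2)\setminus B(0,1-\e/2)$, and the elementary bound $a^n-b^n\le n(a-b)a^{n-1}$ (the paper's inequality $(1+x)^n-(1-x)^n\le 2nx(1+x)^{n-1}$ is this same estimate). The only cosmetic difference is your handling of $\e>2$ via $(1-\e/2)_+$, where the paper simply dispatches that case with a single-point net at the outset.
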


The published variants of his lemma (e.g. \cite{MS}, Lemma 2.6) have exponent
$n$ rather than $n-1$. Since the latter exponent will be crucial for our purposes, we
include the proof of this lemma for the reader's convenience.

\begin{proof}
   Without loss of generality we can assume that $\e<2$, otherwise
   any single point forms a desired net.
   Let $\NN$ be an $\e$-separated subset of $S$ of
   maximal cardinality. By maximality, $\NN$ is an $\e$-net of $S$.
   Since $\NN$ is $\e$-separated, the balls $B(x,\e/2)$ with
   centers $x \in \NN$ are disjoint.
   All these balls have the same volume, and they are contained in the spherical
   shell $B(0,1+\e/2) \setminus B(0,1-\e/2)$. Therefore, comparing the volumes, we have
   \[
     |\mathcal{N}| \cdot \vol(B(0,\e/2))
     \le \vol \big( B(0,1+\e/2) \setminus B(0,1-\e/2) \big).
   \]
   Dividing both sides of this inequality by $\vol(B(0,1))$, we obtain
   $$
   |\mathcal{N}| \cdot (\e/2)^n
   \le (1+\e/2)^n-(1-\e/2)^n.
   $$
   Using the inequality $(1+x)^n-(1-x)^n \le 2nx (1+x)^{n-1}$ valid for $x \in (0,1)$,
   we conclude that $|\NN|$ is bounded as desired.
   This completes the proof.
\end{proof}

The following well known argument allows one to compute the norm
of a linear operator using nets. We have not found a published reference to this argument,
so we include it for the reader's convenience.

\begin{proposition}[Computing norm on nets]     \label{norm on nets}
  Let $\NN$ be a $\e$-net of $S^{n-1}$ and $\MM$ be a $\d$-net of $S^{m-1}$.
  Then for any linear operator  $A : \R^n \to \R^m$
  $$
  \|A\| \le \frac{1}{(1-\e)(1-\d)} \sup_{x \in \NN, \, y \in \MM} |\< Ax,y \> |.
  $$
\end{proposition}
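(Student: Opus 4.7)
The plan is to discretize the two suprema in the variational formula $\|A\|=\sup_{x\in S^{n-1},\,y\in S^{m-1}}|\langle Ax,y\rangle|$ one at a time, replacing each sphere by its net at the cost of a factor $1/(1-\e)$ or $1/(1-\d)$.

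First I would establish the single-net statement: for any linear operator $T:\R^n\to\R^m$ and any $\e$-net $\NN$ of $S^{n-1}$,
\[
 \|T\|\le \frac{1}{1-\e}\sup_{x\in\NN}\|Tx\|_2.
\]
By compactness of $S^{n-1}$, pick $x_0\in S^{n-1}$ with $\|Tx_0\|_2=\|T\|$. Choose $x\in\NN$ with $\|x_0-x\|_2\le\e$. Then by the triangle inequality and the definition of operator norm,
\[
 \|T\|=\|Tx_0\|_2\le \|Tx\|_2+\|T(x_0-x)\|_2\le \sup_{x\in\NN}\|Tx\|_2+\e\|T\|,
\]
which rearranges to the claim (note $\e<1$ may be assumed, otherwise the inequality is trivial).

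Apply this to $T=A$ to get $\|A\|\le (1-\e)^{-1}\sup_{x\in\NN}\|Ax\|_2$. For each fixed $x\in\NN$, the vector $Ax\in\R^m$ has Euclidean norm $\|Ax\|_2=\sup_{y\in S^{m-1}}|\langle Ax,y\rangle|$, and the same one-sided argument applied to the linear functional $y\mapsto\langle Ax,y\rangle$ on $\R^m$, with the $\d$-net $\MM$ of $S^{m-1}$, gives
\[
 \|Ax\|_2\le \frac{1}{1-\d}\sup_{y\in\MM}|\langle Ax,y\rangle|.
\]
Taking the supremum over $x\in\NN$ and chaining the two bounds yields
\[
 \|A\|\le\frac{1}{(1-\e)(1-\d)}\sup_{x\in\NN,\,y\in\MM}|\langle Ax,y\rangle|,
\]
as required.

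There is no real obstacle; this is a straightforward two-step approximation argument. The only minor care needed is the existence of maximizers on the spheres, which is immediate from compactness, and handling the trivial cases $\e\ge 1$ or $\d\ge 1$ where the stated inequality holds tautologically.
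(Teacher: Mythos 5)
Your proof is correct and follows essentially the same two-step approximation as the paper: first bound $\|A\|$ by $(1-\e)^{-1}\sup_{x\in\NN}\|Ax\|_2$ via the triangle inequality, then bound each $\|Ax\|_2$ by $(1-\d)^{-1}\sup_{y\in\MM}|\langle Ax,y\rangle|$ the same way. The only cosmetic difference is that you pick a maximizer on the sphere by compactness, whereas the paper decomposes an arbitrary unit vector as a net point plus a small perturbation; the inequalities are identical.
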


\begin{proof}
Every $z \in S^{n-1}$ has the form $z = x + h$, where $x \in \NN$
and $\|h\|_2 \le \e$.
Since $\|A\| = \sup_{z \in S^{n-1}} \|Az\|_2$, the triangle inequality yields
$$
\|A\| \le \sup_{x \in \NN} \|Ax\|_2 + \max_{\|h\|_2 \le \e} \|Ah\|_2.
$$
The last term in the right hand side is bounded by $\e \|A\|$. Therefore we have
shown that
$$
(1-\e)\|A\| \le \sup_{x \in \NN} \|Ax\|_2.
$$
Fix $x \in \NN$. Repeating the above argument for
$\|Ax\|_2 = \sup_{y \in S^{m-1}} |\< Ax,y \> |$ yields the bound
$$
(1-\d) \|Ax\|_2 \le \sup_{y  \in \MM} |\< Ax,y \> |.
$$
The two previous estimates complete the proof.
\end{proof}

Using nets, one easily proves the well known basic bound $O(\sqrt{N})$
on the norm of a random subgaussian matrix:

\begin{proposition}[Norm]                       \label{norm}
  Let $A$ be an $N \times n$ random matrix, $N \ge n$, whose elements
  are independent copies of a subgaussian random variable. Then
  $$
  \P \big( \|A\| > t \sqrt{N} \big)
  \le e^{-c_0 t^2 N}
  \qquad \text{for } t \ge C_0,
  $$
  where $C_0, c_0 > 0$ depend only on the subgaussian moment $B$.
\end{proposition}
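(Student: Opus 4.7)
The plan is to combine the net machinery from Propositions~\ref{nets} and \ref{norm on nets} with a standard subgaussian tail bound, applied with a union bound over a product of two nets. First, I would choose nets $\NN$ of $S^{n-1}$ and $\MM$ of $S^{N-1}$, both at scale $\e = \d = 1/2$. By Proposition~\ref{nets}, their cardinalities satisfy $|\NN| \le 2n \cdot 5^{n-1}$ and $|\MM| \le 2N \cdot 5^{N-1}$, so in particular $|\NN| \cdot |\MM| \le e^{C_1 N}$ for an absolute constant $C_1$ (using $N \ge n$). By Proposition~\ref{norm on nets},
$$
\|A\| \le 4 \sup_{x \in \NN,\, y \in \MM} |\< Ax, y\>|,
$$
so it suffices to control this supremum.

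Next, I would fix $x \in S^{n-1}$, $y \in S^{N-1}$ and study $\<Ax, y\> = \sum_{i,j} A_{ij} x_j y_i$. Since the entries $A_{ij}$ are i.i.d.\ subgaussian with moment $B$, this linear combination is itself a subgaussian random variable. Its subgaussian moment is controlled by $B$ times the $\ell_2$ norm of the coefficient vector $(x_j y_i)_{i,j}$, which equals $\|x\|_2 \|y\|_2 = 1$. Applying the standard Hoeffding-type bound for sums of independent subgaussians (equivalent to \eqref{subgaussian}), we obtain
$$
\P\big(|\<Ax, y\>| > s\big) \le 2 \exp(-c_2 s^2 / B^2)
$$
for all $s > 0$, where $c_2$ is an absolute constant.

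Now I would set $s = (t/4)\sqrt{N}$ and take a union bound over all $(x,y) \in \NN \times \MM$:
$$
\P\Big(\sup_{x \in \NN,\, y \in \MM} |\<Ax,y\>| > (t/4)\sqrt{N}\Big)
\le 2 e^{C_1 N} \exp\!\big(-c_2 t^2 N / (16 B^2)\big).
$$
Combined with the net inequality above, the event $\|A\| > t \sqrt{N}$ is contained in the event in the last display. Choosing $C_0$ large enough so that $c_2 C_0^2/(16 B^2) \ge 2 C_1$, we get, for all $t \ge C_0$, an exponent of at most $-c_0 t^2 N$ with $c_0 := c_2/(32 B^2)$, which yields the claimed bound $e^{-c_0 t^2 N}$.

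There is no substantial obstacle here; the only point requiring mild care is balancing the two nets and the union bound against the Hoeffding exponent to make the $e^{C_1 N}$ cost from the union bound be absorbed into $e^{-c_0 t^2 N}$, which forces the threshold $C_0$ to depend on $B$. The factor $4$ coming from two nets at scale $1/2$ could be tightened at will by taking finer nets, but it plays no role in the final estimate.
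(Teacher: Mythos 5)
Your proof is correct and follows essentially the same route as the paper: $(1/2)$-nets on both spheres via Proposition~\ref{nets}, the bilinear reduction $\|A\| \le 4\sup_{x,y}|\langle Ax,y\rangle|$ from Proposition~\ref{norm on nets}, the subgaussian tail for $\langle Ax,y\rangle$ (the paper cites Fact~2.1 of \cite{LPRT} where you invoke the Hoeffding-type bound directly), and a union bound absorbed by taking $t \ge C_0$ large enough depending on $B$.
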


\begin{proof}
Let $\NN$ be a $(1/2)$-net of $S^{N-1}$ and $M$ be a $(1/2)$-net of $S^{n-1}$.
By Proposition~\ref{nets}, we can choose these nets such that
$$
|\NN| \le 2N \cdot 5^{N-1} \le 6^N, \quad |\MM| \le 2n \cdot 5^{n-1}
\le 6^n.
$$
For every $x \in \NN$ and $y \in \MM$, the random variable
$\< Ax,y\> $ is subgaussian (see Fact~2.1 in \cite{LPRT}), thus
$$
\P \big( |\< Ax,y \> | > t \sqrt{N} \big) \le C_1 e^{-c_1 t^2 N}
\qquad \text{for } t > 0,
$$
where $C_1, c_1 > 0$ depend only on the subgaussian moment $B$.
Using Lemma~\ref{norm on nets} and taking the union bound, we obtain
$$
\P \big( \|A\| > t \sqrt{N} \big)
\le 4 |\NN| |\MM| \max_{x \in N, \, y \in M}
  \P \big( |\< Ax,y \> |  > t \sqrt{d} \big)
\le 4 \cdot 6^N \cdot 6^N \cdot  C_1 e^{-c_1 t^2 N}.
$$
This completes the proof.
\end{proof}

\subsection{Compressible and incompressible vectors}
%...................................................

In our proof of Theorem~\ref{t: subgaussian}, we will make use
of a partition of the unit sphere $S^{n-1}$ into two sets of
compressible and incompressible vectors.
These sets were first defined in \cite{RV} as follows.

\begin{definition}[Compressible and incompressible vectors]
\label{d: compressible}
  Let $\d, \rho \in (0,1)$.
  A vector $x \in \R^n$ is called {\em sparse} if
  $|\supp(x)| \le \d n$.
  A vector $x \in S^{n-1}$ is called {\em compressible} if $x$
  is within Euclidean distance $\rho$ from the set of
  all sparse vectors.
  A vector $x \in S^{n-1}$ is called {\em incompressible}
  if it is not compressible.
  The sets of compressible and incompressible vectors
  will be denoted by $\Comp = \Comp(\d,\rho)$
  and $\Incomp = \Incomp(\d,\rho)$ respectively.
\end{definition}

We now recall without proof two simple results.
The first is Lemma~3.4 from \cite{RV}:

\begin{lemma}[Incompressible vectors are spread]    \label{l: spread}
  Let $x \in \Incomp(\d,\rho)$.
  Then there exists a set $\s = \s(x) \subseteq \{1, \ldots, n\}$
  of cardinality $|\s| \ge \frac{1}{2} \rho^2 \d n$ and such that
  \begin{equation}                                  \label{eq: spread}
    \frac{\rho}{\sqrt{2n}} \le |x_k| \le \frac{1}{\sqrt{\d n}}
    \qquad \text{for all $k \in \s$.}
  \end{equation}
\end{lemma}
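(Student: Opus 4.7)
The plan is to prove the lemma by contradiction. With $\s$ as in the statement, introduce the two ``bad'' sets
\[
\s_1 = \{k : |x_k| > 1/\sqrt{\d n}\}, \qquad
\s_2 = \{k : |x_k| < \rho/\sqrt{2n}\},
\]
so that $\s, \s_1, \s_2$ partition $\{1,\ldots,n\}$. The idea is to construct an explicit sparse vector $y$ close to $x$ and derive a contradiction with $x \in \Incomp(\d,\rho)$.

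First I would control $|\s_1|$ via the normalization: each coordinate $k \in \s_1$ contributes more than $1/(\d n)$ to $\|x\|_2^2 = 1$, so $|\s_1| < \d n$. Hence the truncation $y \in \R^n$ defined by $y_k = x_k$ for $k \in \s_1$ and $y_k = 0$ otherwise has $|\supp(y)| < \d n$, so $y$ is sparse in the sense of Definition~\ref{d: compressible}.

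Next, suppose for contradiction that $|\s| < \frac{1}{2}\rho^2 \d n$. Using the bound $x_k^2 \le 1/(\d n)$ for $k \in \s$, the bound $x_k^2 < \rho^2/(2n)$ for $k \in \s_2$, and the trivial $|\s_2| \le n$, one obtains
\[
  \|x - y\|_2^2
  = \sum_{k \in \s} x_k^2 + \sum_{k \in \s_2} x_k^2
  \le \frac{|\s|}{\d n} + |\s_2| \cdot \frac{\rho^2}{2n}
  < \frac{\rho^2}{2} + \frac{\rho^2}{2} = \rho^2.
\]
Thus $x$ lies within Euclidean distance $\rho$ of the sparse vector $y$, contradicting $x \in \Incomp(\d,\rho)$. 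Hence $|\s| \ge \frac{1}{2}\rho^2 \d n$.

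I do not expect any real obstacle: the argument is a direct accounting that uses only the normalization $\|x\|_2 = 1$ and the two defining thresholds for $\s$. The only design choice is to zero out the small coordinates when forming the sparse approximation $y$, so that the contribution of $\s_2$ to the distance is controlled by its own threshold $\rho/\sqrt{2n}$ rather than by the normalization.
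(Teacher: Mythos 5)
Your proof is correct, and it is essentially the standard argument for this lemma (which the paper cites from \cite{RV}, Lemma~3.4, without reproducing the proof): truncate $x$ to its large coordinates to obtain a sparse vector $y$, note $|\{k : |x_k| > 1/\sqrt{\d n}\}| < \d n$ by the normalization $\|x\|_2 = 1$, and then account for the mass of the small and intermediate coordinates against the incompressibility threshold $\rho$. The only cosmetic difference is that you run the counting as a contradiction (assuming $|\s| < \tfrac{1}{2}\rho^2\d n$) rather than directly deducing $\sum_{k \in \s} x_k^2 \ge \rho^2/2$ and hence $|\s| \ge \tfrac{1}{2}\rho^2 \d n$; both are the same computation.
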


The other result is a variant of Lemma~3.3 from \cite{RV}, which
establishes the invertibility on compressible vectors, and allows us
to focus on incompressible vectors in our proof of Theorem~\ref{t:
subgaussian}. While Lemma~3.3 was formulated in \cite{RV} for a
square  matrix, the same proof applies to $N \times n$ matrices,
provided that $ N \ge n/2$.

\begin{lemma}[Invertibility for compressible vectors]       \label{l: compressible}
    Let $A$ be an $N \times n$ random matrix, $ N \ge n/2$, whose elements
  are independent copies of a subgaussian random variable.
  There exist $\d, \rho, c_3 > 0$ depending only on the subgaussian moment $B$ such that
  $$
  \P \big( \inf_{x \in \Comp(\d,\rho)} \|A x\|_2 \le c_3 \sqrt{N} \big)
  \le e^{-c_3 N}.
  $$
\end{lemma}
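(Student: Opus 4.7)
The plan is to follow the standard two-step net argument: first establish $\|Ax\|_2 \gtrsim \sqrt{N}$ with exponentially small failure probability on a cleverly chosen $\e$-net of the sparse vectors, then transfer the bound to all compressible vectors using the norm estimate from Proposition~\ref{norm}. The key numerical constraint is that the choice of sparsity parameter $\d$ must be small enough that the cardinality of the net of sparse vectors is dominated by the exponential $e^{cN}$ coming from concentration; the hypothesis $N \ge n/2$ makes this trade-off feasible.

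First I would handle the \textbf{single vector estimate}. Fix $x \in S^{n-1}$. Writing the rows of $A$ as $A_1,\dots,A_N$, we have $\|Ax\|_2^2 = \sum_{i=1}^N \<A_i,x\>^2$, a sum of i.i.d.\ nonnegative random variables, each equal to a squared subgaussian variable of unit variance. A standard Bernstein/Hoeffding-type tensorization (as in \cite{LPRT} Fact~2.1) gives
\[
\P\bigl(\|Ax\|_2 \le c_1 \sqrt{N}\bigr) \le e^{-c_2 N},
\]
where $c_1, c_2 > 0$ depend only on $B$. This is the single-vector concentration bound we will spread across the net.

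Next I construct the \textbf{net of sparse vectors}. The set $\Sparse_\d := \{x \in S^{n-1}: |\supp(x)| \le \d n\}$ is the union, over subsets $J \subseteq \{1,\dots,n\}$ with $|J| \le \d n$, of unit spheres of $\R^J$. Applying Proposition~\ref{nets} inside each such sphere with $\e := c_1/(2C_0)$ (where $C_0$ comes from the norm bound in Proposition~\ref{norm}) and summing over $J$, one obtains a $\e$-net $\NN$ of $\Sparse_\d$ of cardinality
\[
|\NN| \le \binom{n}{\lceil \d n \rceil} \cdot 2\lceil \d n\rceil \Bigl(1+\tfrac{2}{\e}\Bigr)^{\d n} \le \Bigl(\frac{C'}{\d}\Bigr)^{\d n}
\]
for a constant $C'$ depending only on $B$. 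Choose $\d > 0$ small enough so that $(C'/\d)^{\d} \le e^{c_2/4}$; since $N \ge n/2$, this gives $|\NN| \le e^{c_2 N/2}$. Taking the union bound over $\NN$ of the single-vector estimate, we conclude that
\[
\P\Bigl(\inf_{x \in \NN} \|Ax\|_2 \le c_1 \sqrt{N}\Bigr) \le e^{c_2 N/2} \cdot e^{-c_2 N} = e^{-c_2 N/2}.
\]

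Finally I \textbf{transfer from the net to $\Comp(\d,\rho)$}. By Proposition~\ref{norm}, with probability at least $1 - e^{-c_0 C_0^2 N}$ we have $\|A\| \le C_0 \sqrt{N}$. Intersecting with the event above, consider any $x \in \Comp(\d,\rho)$. By definition there is a sparse $y \in \Sparse_\d$ with $\|x-y\|_2 \le \rho$, and renormalizing we may assume $y \in S^{n-1} \cap \R^J$ (with a slightly larger $\rho$, still $o(1)$ in our choice). Pick $z \in \NN$ with $\|y-z\|_2 \le \e$; then $\|x - z\|_2 \le \rho + \e$. Therefore
\[
\|Ax\|_2 \ge \|Az\|_2 - \|A\|\,\|x-z\|_2 \ge c_1\sqrt{N} - C_0\sqrt{N}(\rho+\e).
\]
Choosing $\rho$ small enough (depending only on $B$) so that $C_0(\rho+\e) \le c_1/2$ yields $\|Ax\|_2 \ge (c_1/2)\sqrt{N}$. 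Setting $c_3 := \min(c_1/2, c_2/4, c_0 C_0^2)$ finishes the proof.

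The main obstacle is the bookkeeping in the second step: one must verify that the combinatorial factor $\binom{n}{\d n}$ from choosing the support can be absorbed by choosing $\d$ small, and that the resulting $\d$ still admits an admissible $\rho$ to make the approximation step work — this is exactly where the hypothesis $N \ge n/2$ is used, since without it the concentration exponent $e^{-c_2 N}$ could not swallow an $e^{cn}$ net.
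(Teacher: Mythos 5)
Your proposal is correct and follows essentially the same route as the paper's source for this lemma (the paper cites Lemma~3.3 of \cite{RV} without proof, and that proof is exactly this net-plus-union-bound argument over sparse vectors combined with the norm bound and approximation, with the remark that $N \ge n/2$ absorbs the $e^{Cn}$ net cardinality into $e^{-cN}$). The only blemish is a trivial constants slip: having set $\e = c_1/(2C_0)$ you cannot also demand $C_0(\rho+\e) \le c_1/2$, so take instead $\e = c_1/(4C_0)$ and $\rho \le c_1/(8C_0)$ (remembering the factor $2\rho$ from renormalizing the sparse approximant), which leaves the argument unchanged.
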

\qed

\section{Small ball probability and the arithmetic structure}       \label{s: sbp}
%-----------------------------------------------------

Starting from the works of L\'evy \cite{Lev}, Kolmogorov \cite{Kol} and Ess\'{e}en \cite{Ess}, a number of
results in probability theory was concerned with the question how
spread the sums of independent random variables are. It is
convenient to quantify the spread of a random variable in the
following way.

\begin{definition}
  The {\em L\'evy concentration function} of a random vector $S$ in $\R^m$
  is defined for $\e > 0$ as
  $$
  \LL(S, \e) = \sup_{v \in \R^m} \P ( \|S-v\|_2 \le \e ).
  $$
\end{definition}

An equivalent way of looking at the L\'evy concentration function
is that it measures the {\em small ball probabilities} -- the likelihood
that the random vector $S$ enters a small ball in the space.
An exposition of the theory of small ball probabilities can
be found in \cite{LS}.

One can derive a simple but rather weak bound on L\'evy concentration function
from Paley-Zygmund inequality.

\begin{lemma}                               \label{nontrivial conc}
  Let $\xi$ be a random variable with mean zero, unit variance, and
  finite fourth moment. Then for every $\e \in (0,1)$ there exists
  $p \in (0,1)$ which depends only on $\e$ and on the fourth moment,
  and such that
  $$
  \LL(\xi, \e) \le p.
  $$
\end{lemma}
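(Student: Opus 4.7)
The plan is to apply the Paley--Zygmund inequality to the nonnegative random variable $Z_v := (\xi - v)^2$ for each shift $v \in \R$. Since $\LL(\xi,\e) = \sup_v \P(|\xi - v| \le \e) = 1 - \inf_v \P(|\xi - v| > \e)$, any lower bound on $\P(Z_v \ge \e^2)$ that is positive uniformly in $v$ yields the lemma.

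I would first compute $\E Z_v = 1 + v^2$ using $\E\xi = 0$ and $\E\xi^2 = 1$, and bound the second moment via the elementary convexity inequality $(\xi-v)^4 \le 8(\xi^4+v^4)$, obtaining $\E Z_v^2 \le 8(M + v^4)$, where $M = \E\xi^4$. Choosing $\theta := \e^2/(1+v^2)$, which lies in $(0,1)$ because $\e^2 < 1 \le 1+v^2$, the Paley--Zygmund inequality then yields
\[
\P(|\xi - v| \ge \e) \;=\; \P\bigl(Z_v \ge \theta\,\E Z_v\bigr) \;\ge\; (1-\theta)^2\,\frac{(\E Z_v)^2}{\E Z_v^2} \;\ge\; \frac{(1+v^2-\e^2)^2}{8(M+v^4)}.
\]

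It remains to check that the right-hand side is bounded below uniformly in $v\in\R$ by some strictly positive $c(\e,M)$. A short case split suffices: for $v^2 \le 1$ the numerator is at least $(1-\e^2)^2$ while the denominator is at most $16M$ (using $M \ge (\E\xi^2)^2 = 1$); for $v^2 \ge 1$ one has $1+v^2-\e^2 \ge v^2$, so the ratio is at least $v^4/(16\max(M,v^4)) \ge 1/(16M) \ge (1-\e^2)^2/(16M)$. Either way the Paley--Zygmund bound is at least $(1-\e^2)^2/(16M)$, giving $\LL(\xi,\e) \le 1 - (1-\e^2)^2/(16M) < 1$. The only delicate point is ensuring the lower bound does not degenerate as $|v| \to \infty$; this is what the case split addresses, and otherwise the argument is a direct two-sided application of Paley--Zygmund.
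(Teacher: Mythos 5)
Your proof is correct and follows essentially the same route as the paper: both apply Paley--Zygmund to the shifted variable $\xi-v$ (the paper's form $\P(|Z|>\e)\ge (\E Z^2-\e^2)^2/\E Z^4$ is exactly your application with threshold $\theta\,\E Z_v=\e^2$), the only differences being cosmetic --- you bound $\E(\xi-v)^4$ via the pointwise inequality $(\xi-v)^4\le 8(\xi^4+v^4)$ and uniformize over $v$ by a case split, whereas the paper uses Minkowski's inequality in $L^4$ and an algebraic simplification. One minor point: invoke the strict-inequality form $\P\big(Z_v>\theta\,\E Z_v\big)\ge (1-\theta)^2(\E Z_v)^2/\E Z_v^2$ (equally standard, and the form the paper cites), so that you directly lower-bound $\P(|\xi-v|>\e)$, which is the complement of the event defining $\LL(\xi,\e)$.
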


\begin{remark}
  In particular, this bound holds for subgaussian random variables,
  and with $p$ that depends only on $\e$ and the subgaussian moment.
\end{remark}

\begin{proof}
We use Paley-Zygmund inequality, which states for a random variable $Z$ that
\begin{equation}                                    \label{paley-zygmund}
  \P (|Z| > \e)
  \ge \frac{(\E Z^2 - \e^2)^2}{\E Z^4}, \qquad \e > 0,
\end{equation}
see e.g. \cite{LPRT}, Lemma 3.5.

Let $v \in \R$ and consider the random variable $Z = \xi - v$. Then
$$
\E Z^2 = 1 + v^2.
$$
By H\"older inequality, we have
$$
B := \E \xi^4 \ge (\E \xi^2)^2 = 1,
$$
so, using Minkowski inequality, we obtain
$$
(\E Z^4)^{1/4} \le B^{1/4} + v
\le B^{1/4} (1+v)
\le B^{1/4} 2^{1/2} (1+v^2)^{1/2}.
$$
Using this in \eqref{paley-zygmund}, we conclude that
$$
\P(|\xi-v| > \e)
\ge \frac{(1 + v^2 - \e^2)^2}{4B(1+v^2)^2}
= \frac{1}{4B} \Big( 1 - \frac{\e^2}{1+v^2} \Big)^2
\ge \frac{1-\e^2}{4B}.
$$
This completes the proof.
\end{proof}

We will need a much stronger bound on the concentration function for
sums of independent random variables. Here we present a
multi-dimensional version of the inverse Littlewood-Offord
inequality from \cite{RV}. While this paper was in preparation,
Friedland and Sodin \cite{FS} proposed two different ways to
simplify and improve our argument in \cite{RV}. We shall therefore
present here a multi-dimensional version of one of arguments of
Friedland and Sodin \cite{FS private}, which is considerably simpler
than our original proof.

We consider the sum
$$
S = \sum_{k=1}^N a_k \xi_k
$$
where $\xi_k$ are independent and identically distributed random variables,
and $a_k$ are some vectors in $\R^m$.
The Littlewood-Offord theory describes the behavior of the L\'evy concentration
function of $S$ in terms of the additive structure of the vectors $a_k$.

In the scalar case, when $m=1$,
the additive structure of a sequence $a = (a_1,\ldots,a_N)$ of real numbers $a_k$
can be described in terms of the shortest arithmetic progression into which
it (essentially) embeds. This length is conveniently expressed as the
essential {\em least common denominator} of $a$, defined as follows.
We fix parameters $\a, \g \in (0,1)$, and define
$$
\LCD_{\a,\g}(a)
:= \inf \Big\{ \theta > 0: \; \dist (\theta a, \Z^N) < \min(\g\|\theta a\|_2,\a) \Big\}.
$$
The requirement that the distance is smaller than $\g\|\theta a\|_2$
forces to consider only non-trivial integer points as
approximations of $\theta a$ -- only those in a non-trivial cone
around the direction of $a$. One typically uses this definition with
$\g$ a small constant, and for $\a = c \sqrt{N}$ with a small constant $c>0$.
The inequality $\dist(\theta a, \Z^N) < \a$ then yields that most coordinates of
$\theta a$ are within a small constant distance from integers.

The definition of the essential least common denominator
carries over naturally to higher dimensions and thus
allows one to control the arithmetic structure
of a sequence $a = (a_1,\ldots,a_N)$ of vectors $a_k \in \R^m$.
To this end, we define the product of such multi-vector $a$ and a vector $\theta \in \R^m$ as
$$
\theta \cdot a = (\< \theta, a_1\> , \ldots, \< \theta, a_N\> ) \in \R^N.
$$
A more traditional way of looking at $\theta \cdot a$ is to regard
it as the product of the matrix $a$ with rows $a_k$ and the vector
$\theta$.

Then we define, for $\a > 0$ and $\g \in (0,1)$,
$$
\LCD_{\a,\g}(a)
:= \inf \Big\{ \|\theta\|_2: \; \theta \in \R^m,
     \dist(\theta \cdot a, \Z^N) < \min(\g\|\theta \cdot a\|_2,\a) \Big\}.
$$

The following theorem gives a bound on the small ball probability
for a random sum $S = \sum_{k=1}^N a_k \xi_k$ in terms of the additive
structure of the coefficient sequence $a$.
The less structure in $a$, the bigger its least common denominator is,
and the smaller is the small ball probability for $S$.

\begin{theorem}[Small ball probability]                         \label{SBP}
  Consider a sequence $a = (a_1,\ldots,a_N)$ of vectors $a_k \in \R^m$,
  which satisfies
  \begin{equation}                                  \label{super-isotropy}
    \sum_{k=1}^N \pr{a_k}{x}^2 \ge \norm{x}_2^2
    \qquad \text{for every $x \in \R^m$.}
  \end{equation}
  Let $\xi_1, \ldots, \xi_N$ be independent and identically distributed,
  mean zero random variables, such that
  $\LL(\xi_k,1) \le 1-b$ for some $b > 0$.
  Consider the random sum $S = \sum_{k=1}^N a_k \xi_k$.
  Then, for every $\a > 0$ and $\g \in (0,1)$, and for
  $$
  \e \ge \frac{\sqrt{m}}{\LCD_{\a,\g}(a)},
  $$
  we have
  $$
  \LL(S,\e \sqrt{m})
  \le \Big( \frac{C\e}{\g \sqrt{b}} \Big)^m + C^m e^{-2b\a^2}.
  $$
\end{theorem}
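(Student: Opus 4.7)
The plan is to adapt the Fourier-analytic Halász / Friedland-Sodin approach to the multi-dimensional setting. After a symmetrization step (replacing $\xi_k$ by $\xi_k - \xi_k'$ with i.i.d.\ copies $\xi_k'$, which preserves the small ball hypothesis $\LL(\xi,1) \le 1-b$ up to absolute constants and renders the characteristic functions real and nonnegative), the multi-dimensional Esseen inequality gives an estimate of the form
$$\LL(S, \e\sqrt{m}) \le (C_1\e)^m \int_{\|\theta\|_2 \le c_2 \sqrt{m}/\e} \prod_{k=1}^N |\phi_\xi(\pr{\theta}{a_k})| \, d\theta,$$
where $\phi_\xi$ denotes the characteristic function of $\xi$. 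The calibration $\e \ge \sqrt{m}/\LCD_{\a,\g}(a)$ is chosen precisely so that the integration domain sits inside $B(0, \LCD_{\a,\g}(a))$.

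The hypothesis $\LL(\xi,1) \le 1-b$ yields, via the standard computation starting from $|\phi_\xi(s)|^2 = \E \cos(2\pi s(\xi-\xi'))$, the pointwise Fourier bound
$$|\phi_\xi(s)| \le \exp\bigl(-cb\,\|s\|_{\R/\Z}^2 \bigr).$$
Taking the product over $k$ and using $\sum_k \|\pr{\theta}{a_k}\|^2_{\R/\Z} = \dist(\theta \cdot a, \Z^N)^2$ gives
$$\prod_k |\phi_\xi(\pr{\theta}{a_k})| \le \exp\bigl(-cb\,\dist(\theta \cdot a, \Z^N)^2\bigr).$$
Now the defining property of $\LCD_{\a,\g}(a)$ applies at every $\theta$ in the integration domain:
$$\dist(\theta \cdot a, \Z^N) \ge \min\bigl(\g\|\theta \cdot a\|_2,\, \a\bigr).$$
I would split the integration domain according to which term realizes this minimum. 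On the region $\{\g\|\theta \cdot a\|_2 \le \a\}$, the super-isotropy hypothesis \eqref{super-isotropy} gives $\dist(\theta \cdot a, \Z^N) \ge \g\|\theta\|_2$, so the integrand is majorized by $\exp(-cb\g^2\|\theta\|_2^2)$; extending the integral to all of $\R^m$ produces a Gaussian integral of value $(\pi/(cb\g^2))^{m/2}$, which combined with the $(C_1\e)^m$ prefactor yields the first term $(C\e/(\g\sqrt{b}))^m$. On the complementary region the integrand is uniformly at most $\exp(-cb\a^2)$; integrating over the full ball of radius $c_2\sqrt{m}/\e$ and combining with the prefactor produces $C^m e^{-cb\a^2}$, which one rescales (by shrinking $c$) to the claimed $C^m e^{-2b\a^2}$.

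The main obstacle is the careful bookkeeping of the $m$-dependent constants and volumes in the multi-dimensional Esseen inequality, so that the Fourier domain radius matches $\sqrt{m}/\e$ and the Gaussian contribution in the first region scales correctly in $m$. A secondary but delicate point is extracting the bound $|\phi_\xi(s)| \le \exp(-cb\|s\|_{\R/\Z}^2)$ from the one-sided concentration assumption alone, with no additional moment hypothesis on $\xi$; this is precisely where the Friedland-Sodin simplification of the original \cite{RV} argument enters, and its multi-dimensional adaptation is the technical core of the theorem.
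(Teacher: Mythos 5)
There is a genuine gap, and it is exactly at the step you yourself flag as the "technical core": the pointwise Fourier bound $|\phi_\xi(s)| \le \exp(-cb\,\|s\|_{\R/\Z}^2)$ does not follow from $\LL(\xi,1)\le 1-b$, and is in fact false. Take $\xi$ uniform on $\{-2,2\}$: then $\LL(\xi,1)=1/2$, so the hypothesis holds with $b=1/2$, but $\phi_\xi(t)=\cos(4\pi t)$ satisfies $|\phi_\xi(1/4)|=1$ while $\|1/4\|_{\R/\Z}=1/4$. The standard symmetrization computation only gives
$$
|\phi_\xi(t)| \le \exp\Big(-c\,b\;\E\big(\min_{q\in\Z}|t\bar{\xi}-q|^2 \;\big|\; |\bar{\xi}|\ge 1\big)\Big),
\qquad \bar{\xi}=\xi-\xi',
$$
i.e.\ the torus distance is evaluated at the randomly dilated point $\bar{\xi}t$, not at $t$; since $|\bar{\xi}|$ can be much larger than $1$, this cannot be collapsed to $\|t\|_{\R/\Z}$. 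Consequently your subsequent step breaks down: after taking the product over $k$ (and Jensen to pull out the expectation in $\bar{\xi}$), the quantity you must control is $\dist\big(\tfrac{z}{\e}\,\theta\cdot a,\Z^N\big)$ for an arbitrary dilation $z\ge 1$, and for large $z$ the point $\tfrac{z}{\e}\theta$ leaves the ball of radius $\LCD_{\a,\g}(a)$, so the defining property of the LCD gives no pointwise lower bound on the integrand, and your split of the integration domain according to which term realizes $\min(\g\|\theta\cdot a\|_2,\a)$ has nothing to apply to.

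The paper avoids any pointwise domination of the integrand. For each fixed $z\ge 1$ it studies the sub-level ("recurrence") sets $I(t)=\{\theta\in B(0,\sqrt{m}): \min_{p\in\Z^N}\|\tfrac{z}{\e}\theta\cdot a-p\|_2\le t\}$ and applies the LCD dichotomy not to $\theta$ itself but to \emph{differences}: for $\theta',\theta''\in I(t)$ the vector $\tau=\tfrac{z}{\e}(\theta'-\theta'')$ satisfies $\dist(\tau\cdot a,\Z^N)\le 2t<\a$, so either $\|\tau\|_2\ge \LCD_{\a,\g}(a)\ge\sqrt{m}/\e$ or $2t\ge\g\|\tau\cdot a\|_2\ge\g\|\tau\|_2$ (using \eqref{super-isotropy}). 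This forces $I(t)$ into a union of at most $(3\sqrt{m}z/\sqrt{m})$-type separated clusters of radius $2t\e/(\g z)$, whence $\vol(I(t))\le (Ct\e/\g\sqrt{m})^m$ \emph{uniformly in} $z$; the layer-cake integration over $t<\a/2$ then gives the term $(C\e/\g\sqrt{b})^m$, and the complement of $I(\a/2)$ gives $C^m e^{-2b\a^2}$. Your Esseen setup, calibration $\e\ge\sqrt{m}/\LCD_{\a,\g}(a)$, use of \eqref{super-isotropy}, and the final two-term accounting are all in the right spirit, but without replacing the false pointwise bound by a level-set (or equivalent) argument that is uniform in the dilation $\bar{\xi}$, the proof does not go through.
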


\begin{remark}
  The non-degeneracy condition \eqref{super-isotropy} is meant to guarantee that the system
  of vectors $(a_k)$ is genuinely $m$-dimensional. It disallows these vectors
  to lie on or close to any lower-dimensional subspace of $\R^m$.

  Hal\'{a}sz \cite{H} developed a powerful approach to bounding
  concentration function; his approach influenced our arguments below.
  Hal\'{a}sz \cite{H} operated under a similar non-degeneracy condition on the vectors $a_k$:
  for every $x \in S^{m-1}$, at least $cN$ terms satisfy $|\pr{a_k}{x}| \ge 1$.
  After properly rescaling $a_k$ by the factor $\sqrt{c/N}$,
  Hal\'{a}sz's condition is seen to be more restrictive than
  \eqref{super-isotropy}.
\end{remark}

\subsection{Proof of the Small Ball Probability Theorem}
%.......................................................

To estimate the L\'{e}vy concentration function we apply the
Ess\'{e}en Lemma, see e.g. \cite{TV}, p.~290.
\begin{lemma}  \label{l: Esseen}
  Let $Y$ be a random vector in $\R^m$. Then
  \[
    \sup_{v \in \R^m} \P (\norm{Y-v}_2 \le \sqrt{m})
    \le C^m \int_{B(0, \sqrt{m})} |\phi_Y(\theta)| \, d \theta
  \]
  where $\phi_Y(\theta) = \E \exp (2\pi i \pr{\theta}{Y} )$ is the
  characteristic function of $Y$.
\end{lemma}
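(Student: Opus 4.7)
The plan is a classical Fourier-inversion argument: pointwise dominate $\one_{B(0, \sqrt m)}$ by a non-negative smooth majorant $F$ whose Fourier transform $\hat F$ is supported in $B(0, \sqrt m)$ and has $\|\hat F\|_\infty \leq C^m$ for some absolute $C > 0$, then expand $\E F(Y - v)$ via Fourier inversion.

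First I would construct such an $F$. A concrete candidate is the tensor product $F(x) = \prod_{i=1}^m f_0(x_i/\sqrt m)$ where $f_0$ is a $1$-dimensional non-negative bump like $f_0(t) = (\pi^2/4)(\sin(\pi t/2)/(\pi t/2))^2$, which satisfies $f_0 \geq 1$ on $[-1, 1]$ and has $\hat{f_0}$ supported in $[-1/2, 1/2]$. Then $F \geq 1$ on the cube $[-\sqrt m, \sqrt m]^m \supseteq B(0, \sqrt m)$, and $\hat F$ is supported in the small cube $[-1/(2\sqrt m), 1/(2\sqrt m)]^m \subset B(0, \sqrt m)$. For the sharp constant in $\|\hat F\|_\infty \leq C^m$, a radial construction adapted to the ball (such as a spherical Fej\'er-type kernel, or a Beurling--Selberg-type extremal majorant of $\one_{B(0,\sqrt m)}$ with exponential type $\sqrt m$) is preferable to the tensor product.

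Given such $F$, the argument is immediate. For any $v \in \R^m$ property (a) gives
\[
\P(\|Y - v\|_2 \leq \sqrt m) \leq \E F(Y - v).
\]
Using $F(x) = \int \hat F(\theta) e^{2\pi i \pr{\theta}{x}} d\theta$ and Fubini (justified because $\hat F \in L^1$),
\[
\E F(Y - v) = \int \hat F(\theta) e^{-2\pi i \pr{\theta}{v}} \phi_Y(\theta) d\theta,
\]
and taking absolute values together with the support and $L^\infty$-bound on $\hat F$,
\[
\E F(Y - v) \leq \int |\hat F(\theta)||\phi_Y(\theta)| d\theta \leq C^m \int_{B(0, \sqrt m)} |\phi_Y(\theta)| d\theta.
\]
Taking supremum over $v$ proves the lemma.

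The main obstacle is ensuring the sharp $C^m$ bound on $\|\hat F\|_\infty$ with $C$ truly absolute. The tensor-product construction above yields $\|\hat F\|_\infty$ of order $(C_0\sqrt m)^m$, off by a factor of $m^{m/2}$ from the claimed bound. Obtaining the clean $C^m$ forces one to exploit the spherical symmetry of $B(0, \sqrt m)$ rather than embed it inside a cube: the construction of a radial non-negative band-limited majorant of $\one_{B(0, \sqrt m)}$ (via the radial Paley--Wiener theorem and a Fej\'er-type positivization trick $F = c |\check{g}|^2$ for an appropriate radial $g$ supported in $B(0, \sqrt m/2)$, with $c$ chosen so that the oscillatory minima of $|\check g|^2$ are lifted above $1$ on $B(0,\sqrt m)$) is the only non-routine ingredient; the Fourier manipulations above are standard.
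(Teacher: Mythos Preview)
The paper does not supply its own proof of this lemma; it is quoted as the classical Ess\'een inequality with a reference to Tao--Vu, \emph{Additive Combinatorics}, p.~290. So there is no in-paper argument to compare against.

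Your Fourier-inversion strategy is exactly the classical route to Ess\'een's inequality: dominate the indicator of the ball by a non-negative band-limited majorant $F$ and read off the bound from $\E F(Y-v)=\int \hat F(\theta)e^{-2\pi i\langle\theta,v\rangle}\phi_Y(\theta)\,d\theta$. Your diagnosis of the constants is also correct. The tensor-product majorant built from one-dimensional Fej\'er kernels lives above the cube $[-\sqrt m,\sqrt m]^m$, whose volume is $(2\sqrt m)^m$, whereas $\vol(B(0,\sqrt m))\asymp(2\pi e)^{m/2}$; this volume gap is precisely the extra $m^{m/2}$ you found in $\|\hat F\|_\infty$. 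Since $\|\hat F\|_\infty\ge\hat F(0)=\int F\ge\vol(B(0,\sqrt m))$, the target $\|\hat F\|_\infty\le C^m$ forces $\int F$ to be of the same order as the ball volume, which rules out the cube-based construction and mandates a ball-adapted one. Your suggestion $F=c\,|\check g|^2$ with $g$ supported in $B(0,\sqrt m/2)$ gives non-negativity and the correct Fourier support for free; the remaining work---arranging $F\ge 1$ on all of $B(0,\sqrt m)$ despite the zeros of the radial sinc---is, as you say, the only non-routine point, and is handled in the literature either by a Beurling--Selberg-type extremal construction or, more cheaply, by combining a few dilates/translates of $|\check g|^2$ to fill in the zeros at a cost absorbed into $C^m$.

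In short: your plan is correct and complete modulo the explicit radial majorant, and it is the standard proof that the paper is implicitly invoking via its citation.
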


Applying Lemma \ref{l: Esseen} to the vector $Y=S/\e$ and using the
independence of random variables $\xi_1 \etc \xi_N$, we obtain
\begin{equation}  \label{esseen}
  \LL(S,\e \sqrt{m})
  \le C^m \int_{B(0, \sqrt{m})} \prod_{k=1}^N
  |\phi(\pr{\theta}{a_k}/\e)| \, d \theta,
\end{equation}
where $\phi(t)=\E \exp (2 \pi i t \xi)$ is the characteristic
function of $\xi := \xi_1$. To estimate this characteristic function,
we follow the conditioning argument of \cite{R}, \cite{RV}. Let $\xi'$ be an independent copy
of $\xi$ and denote by $\bar{\xi}$ the symmetric random variable $\xi-\xi'$.
Then
\[
  |\phi(t)|^2=\E \exp (2 \pi i t \bar{\xi})
  = \E \cos (2 \pi  t \bar{\xi}).
\]
Using the inequality $|x| \le \exp (- \frac{1}{2} (1-x^2))$, which is valid for all
$x \in \R$, we obtain
\[
    |\phi(t)|
    \le \exp \Big( -\frac{1}{2} \big( 1- \E \cos (2 \pi  t \bar{\xi}) \big) \Big).
\]
By conditioning on $\xi'$ we see that our assumption $\LL(\xi,1)
\le 1-b$ implies that $\P(|\bar{\xi}| \ge 1) \ge b$. Therefore
\begin{align*}
  1- \E \cos (2 \pi  t \bar{\xi})
  &\ge \P(|\bar{\xi}| \ge 1) \cdot
    \E \Big (1- \cos (2 \pi  t \bar{\xi}) \mid |\bar{\xi}| \ge 1 \Big )
  \\
  &\ge b \cdot \frac{4}{\pi^2}
    \E \Big (\min_{q \in \Z} | 2 \pi  t \bar{\xi} - 2 \pi q|^2 \mid |\bar{\xi}| \ge 1 \Big )
  \\
  &= 16 b \cdot
    \E \Big (\min_{q \in \Z} |  t \bar{\xi} -  q|^2
        \mid |\bar{\xi}| \ge 1 \Big ).
\end{align*}
Substituting of this into \eqref{esseen} and using Jensen's
inequality, we get
\begin{align*}
  &\LL(S,\e \sqrt{m}) \\
  &\le C^m \int_{B(0, \sqrt{m})}
    \exp \Big( -8b \E \Big (
      \sum_{k=1}^N \min_{q \in \Z} |\bar{\xi} \pr{\theta}{a_k}/\e  -  q|^2
      \; \Big| \; |\bar{\xi}| \ge 1 \Big ) \Big) \, d \theta
  \\
  &\le C^m \E \Big( \int_{B(0, \sqrt{m})}
    \exp \Big( -8b \min_{p \in \Z^N} \Big\| \frac{\bar{\xi}}{\e} \, \theta \cdot a
- p \Big\|_2 \Big)
       \, d \theta
       \; \Big | \; |\bar{\xi}| \ge 1 \Big)
  \\
  &\le C^m \sup_{z \ge 1} \int_{B(0,\sqrt{m})} \exp(- 8 b f^2(\theta)) \; d\theta,
\end{align*}
where
$$
f(\theta) = \min_{p \in \Z^N} \Big\| \frac{z}{\e} \, \theta \cdot a
- p \Big\|_2.
$$

The next and major step is to bound the size of the {\em recurrence set}
$$
I(t) := \Big\{ \theta \in B(0,\sqrt{m}) : \; f(\theta) \le t \}.
$$

\begin{lemma}[Size of the recurrence set]               \label{size recurrence}
  We have
  $$
  \vol(I(t)) \le \Big( \frac{Ct\e}{\g \sqrt{m}} \Big)^m, \qquad t < \a/2.
  $$
\end{lemma}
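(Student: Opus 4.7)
The plan is to convert the LCD hypothesis into a \emph{dichotomy} for pairs of points in $I(t)$---they must be either very close or very far apart---and then to bound the volume of $I(t)$ by a covering argument.

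First I would establish the dichotomy. Fix $\theta,\theta'\in I(t)$ and choose integer vectors $p,p'\in\Z^N$ with $\|(z/\e)\theta\cdot a-p\|_2\le t$ and $\|(z/\e)\theta'\cdot a-p'\|_2\le t$. Setting $\tilde\theta:=(z/\e)(\theta-\theta')$, the triangle inequality yields $\dist(\tilde\theta\cdot a,\Z^N)\le 2t<\a$. If $\|\tilde\theta\|_2<\LCD_{\a,\g}(a)$, the definition of the LCD forces
\[
 2t\;\ge\;\dist(\tilde\theta\cdot a,\Z^N)\;\ge\;\min\bigl(\g\|\tilde\theta\cdot a\|_2,\a\bigr)\;=\;\g\|\tilde\theta\cdot a\|_2,
\]
and the non-degeneracy hypothesis \eqref{super-isotropy} gives $\|\tilde\theta\|_2\le\|\tilde\theta\cdot a\|_2\le 2t/\g$. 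Taking contrapositives, using $\e\ge\sqrt m/\LCD_{\a,\g}(a)$, and unwinding the scaling produces the dichotomy: for every pair $\theta,\theta'\in I(t)$,
\[
 \|\theta-\theta'\|_2\le\frac{2t\e}{\g z}\qquad\text{or}\qquad\|\theta-\theta'\|_2\ge\frac{\LCD_{\a,\g}(a)\,\e}{z}\ge\frac{\sqrt m}{z}.
\]

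Next I would cover $I(t)$ by small balls. Let $\NN$ be a maximal $(2t\e/(\g z))$-separated subset of $I(t)$; by maximality, $I(t)\subseteq\bigcup_{\theta_0\in\NN}B(\theta_0,2t\e/(\g z))$, and, by the dichotomy, any two distinct points of $\NN$ are at distance at least $\sqrt m/z$. Comparing the volumes of the disjoint balls of radius $\sqrt m/(2z)$ centered at points of $\NN$ (all contained in $B(0,2\sqrt m)$) gives $|\NN|\le(Cz)^m$. Together with the standard estimate $\vol(B(0,r))\le(Cr/\sqrt m)^m$ in $\R^m$, this yields
\[
 \vol(I(t))\le(Cz)^m\cdot\left(\frac{C\cdot 2t\e}{\g z\sqrt m}\right)^{\!m}=\left(\frac{C't\e}{\g\sqrt m}\right)^{\!m},
\]
as claimed.

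The main obstacle is the dichotomy step: the LCD controls only a \emph{single} vector whose image nearly lies in $\Z^N$, yet we need a separation statement for arbitrary pairs in $I(t)$. The key trick is to apply the LCD to the difference $\tilde\theta=(z/\e)(\theta-\theta')$ and to use the non-degeneracy \eqref{super-isotropy} to pass from $\|\tilde\theta\cdot a\|_2$ back to $\|\tilde\theta\|_2$. One small edge case remains: the dichotomy is non-vacuous only when $2t/\g<\LCD_{\a,\g}(a)$, i.e.\ $t<\g\LCD_{\a,\g}(a)/2$; when $t$ exceeds this threshold the target bound $(Ct\e/(\g\sqrt m))^m$ already dominates $\vol(B(0,\sqrt m))$, so the lemma holds trivially for a suitable choice of the constant $C$.
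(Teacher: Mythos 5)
Your proposal is correct and takes essentially the same route as the paper's proof: the same dichotomy for rescaled differences of points of $I(t)$ obtained from the definition of $\LCD_{\a,\g}(a)$ together with the non-degeneracy condition \eqref{super-isotropy}, followed by the same covering of $I(t)$ by balls of radius $2t\e/(\g z)$ with $(\sqrt m/z)$-separated centers and a volume comparison. The only difference is cosmetic: you handle the case $2t/\g \ge \LCD_{\a,\g}(a)$ explicitly, whereas the paper's formulation of the covering step absorbs it automatically.
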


\begin{proof}
Fix $t < \a/2$. Consider two points $\theta', \theta'' \in I(t)$.
There exist $p', p'' \in \Z^N$ such that
$$
\Big\| \frac{z}{\e} \, \theta' \cdot a - p' \Big\|_2 \le t, \quad
\Big\| \frac{z}{\e} \, \theta'' \cdot a - p'' \Big\|_2 \le t.
$$
Let
$$
\t := \frac{z}{\e} \, (\theta' - \theta''), \qquad p := p' - p''.
$$
Then, by the triangle inequality,
\begin{equation}                            \label{<2t}
  \|\t \cdot a - p\|_2 \le 2t.
\end{equation}
Recall that by the assumption of the theorem,
$$
\LCD_{\a,\g}(a) \ge \frac{\sqrt{m}}{\e}.
$$
Therefore, by the definition of the least common denominator, we have that
either
$$
\|\t\|_2 \ge \frac{\sqrt{m}}{\e},
$$
or otherwise
\begin{equation}                            \label{>min}
  \|\t \cdot a - p\|_2 \ge \min(\g\|\t \cdot a\|_2,\a).
\end{equation}
In the latter case, since $2t <\a$, inequalities \eqref{<2t} and \eqref{>min}
together yield
$$
2t \ge \g\|\t \cdot a\|_2 \ge \g \|\tau\|_2,
$$
where the last inequality follows from condition
\eqref{super-isotropy}.

Recalling the definition of $\t$, we have proved that every pair of
points $\theta', \theta'' \in I(t)$ satisfies:
$$
\text{either} \quad \|\theta' - \theta''\|_2 \ge \frac{\sqrt{m}}{z} =: R
\quad \text{or} \quad \|\theta' - \theta''\|_2 \le \frac{2t\e}{\g z} =: r.
$$
It follows that $I(t)$ can be covered by Euclidean balls of radii
$r$,  whose centers are $R$-separated in the Euclidean distance.
Since $I(t) \subset B(0,\sqrt{m})$, the number of such balls is at
most
\[
  \frac{\vol (B(0, \sqrt{m} + R/2))}{\vol (B(0,  R/2))}
  =\Big( \frac{2\sqrt{m}}{R} + 1 \Big)^m
  \le \Big( \frac{3\sqrt{m}}{R} \Big)^m.
\]
(In the last inequality we used that $R \le \sqrt{m}$ because $z \ge 1$).
Recall that the volume of a Euclidean ball of radius $r$ in $\R^m$ is bounded by
$(Cr/\sqrt{m})^m$.
Summing these volumes, we conclude that
$$
\vol(I(t)) \le \Big( \frac{3Cr}{R} \Big)^m,
$$
which completes the proof of the lemma.
\end{proof}

\begin{proof}[Proof of Theorem~\ref{SBP}.]
We decompose the domain into two parts. First, by the definition
of $I(t)$, we have
\begin{align}                           \label{outside recurrence}
\int_{B(0,\sqrt{m}) \setminus I(\alpha/2)} \exp(- 8b f^2(\theta)) \; d\theta
  &\le \int_{B(0,\sqrt{m})} \exp(-2b\a^2) \; d\theta \nonumber \\
  &\le C^m \exp(-2b\a^2).
\end{align}
In the last line, we used the estimate $|\vol(B(0,\sqrt{m})| \le C^m$.

Second, by the integral distribution formula and using Lemma~\ref{size recurrence},
we have
\begin{align}                           \label{on recurrence}
\int_{I(\alpha/2)} \exp(- 8 b f^2(\theta)) \; d\theta
  &= \int_0^{\a/2} 16 b t \exp(-8 b t^2) |\vol(I(t))| \; dt \nonumber \\
  &\le 16 b \Big( \frac{C\e}{\g \sqrt{m}} \Big)^m
    \int_0^\infty t^{m+1} \exp(-8 b t^2) \; dt \nonumber \\
  &\le \Big( \frac{C'\e}{\g \sqrt{b}} \Big)^m \sqrt{m}
  \le \Big( \frac{C''\e}{\g \sqrt{b}} \Big)^m.
\end{align}
Combining \eqref{outside recurrence} and \eqref{on recurrence}
completes the proof of Theorem~\ref{SBP}.
\end{proof}

\subsection{Least common denominator of incompressible vectors}

We now prove a simple fact that the least common denominator
of any incompressible vector $a$ in $\R^N$ is at least of order $\sqrt{N}$.
Indeed, by Lemma~\ref{l: spread} such a vector has many
coordinates of order $1/\sqrt{N}$. Therefore,
to make a dilation $\theta a$ of this vector close to an integer point,
one has to scale $a$ by at least $\theta  \gtrsim \sqrt{N}$.
We now make this heuristic reasoning formal.

\begin{lemma}[LCD of incompressible vectors]                    \label{LCD sqrt N}
  For every $\d,\rho \in (0,1)$ there exist $c_1(\d,\rho) > 0$ and $c_2(\d) > 0$
  such that the following holds.
  Let $a \in \R^N$ be an incompressible vector: $a \in \Incomp(\d,\rho)$.
  Then, for every  $0 < \g < c_1(\d,\rho)$ and every $\a > 0$, one has
  $$
  \LCD_{\a,\g}(a) > c_2(\d) \sqrt{N}.
  $$
\end{lemma}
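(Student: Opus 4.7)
The plan is to use Lemma \ref{l: spread} to locate a large set of coordinates of $a$ whose magnitudes are comparable to $1/\sqrt{N}$. For any $\theta > 0$ of order less than $\sqrt{N}$, the vector $\theta a$ restricted to these coordinates has entries bounded by $1/2$ in absolute value, so the integer nearest to $\theta a_k$ on this set is simply $0$. This will force $\dist(\theta a, \Z^N)$ to be proportional to $\theta$ with a constant depending only on $\d,\rho$, and this lower bound will contradict the small-distance condition defining the LCD, provided $\g$ is chosen small enough.

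More precisely, set $c_2(\d) := \sqrt{\d}/2$ and $c_1(\d,\rho) := \rho^2\sqrt{\d}/2$, and suppose for contradiction that $\theta > 0$ satisfies
\[
  \dist(\theta a, \Z^N) < \min(\g \|\theta a\|_2, \a)
\]
with $\g < c_1(\d,\rho)$ and $\theta \le c_2(\d)\sqrt{N}$. Since $a \in \Incomp(\d,\rho) \subseteq S^{N-1}$, we have $\|\theta a\|_2 = \theta$. Apply Lemma \ref{l: spread} to obtain a set $\s \subseteq \{1,\ldots,N\}$ with $|\s| \ge \tfrac{1}{2}\rho^2\d N$ on which
\[
  \frac{\rho}{\sqrt{2N}} \le |a_k| \le \frac{1}{\sqrt{\d N}}.
\]

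For every $k \in \s$, the choice of $\theta$ gives $\theta |a_k| \le \theta/\sqrt{\d N} \le 1/2$, and hence the nearest integer to $\theta a_k$ is $0$. Therefore
\[
  \dist(\theta a, \Z^N)^2 \ge \sum_{k \in \s} (\theta a_k)^2 \ge |\s| \cdot \frac{\theta^2 \rho^2}{2N} \ge \frac{\rho^4 \d}{4}\, \theta^2,
\]
so $\dist(\theta a, \Z^N) \ge \tfrac{\rho^2\sqrt{\d}}{2}\, \theta = c_1(\d,\rho)\, \theta$. But the assumption $\g < c_1(\d,\rho)$ yields $\g\theta < c_1(\d,\rho)\theta \le \dist(\theta a, \Z^N)$, contradicting the defining inequality. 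Thus any admissible $\theta$ must exceed $c_2(\d)\sqrt{N}$, which is precisely the claim.

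There is no serious obstacle: the entire argument is a direct quantitative rewriting of the heuristic that an $N$-dimensional vector with many coordinates of size $\sim 1/\sqrt{N}$ must be dilated by at least $\sqrt{N}$ before it can come close to a lattice point in a genuinely nontrivial direction. The only care needed is to keep track of the two regimes in the $\min(\g \|\theta a\|_2,\a)$ condition; but since we only need to contradict the $\g\|\theta a\|_2$ part, the parameter $\a$ plays no role here and the bound is uniform in $\a > 0$.
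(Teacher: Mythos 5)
Your argument is correct, and it rests on the same foundation as the paper's proof — Lemma \ref{l: spread} together with the observation that only the $\g\|\theta a\|_2$ branch of the LCD condition matters — and it even yields the same constants $c_1(\d,\rho)=\tfrac12\rho^2\sqrt{\d}$, $c_2(\d)=\tfrac12\sqrt{\d}$. The mechanics differ, however. The paper takes an admissible $\theta$ with its integer approximant $p$, applies Chebychev's inequality to produce a large set of coordinates on which $|a_k-p_k/\theta|$ is small, intersects that set with the spread set $\s$, and then works at a single common coordinate: the lower bound in \eqref{eq: spread} forces $|p_k|\ge 1$, and the upper bound then forces $\theta>\tfrac12\sqrt{\d N}$. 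You instead argue by contradiction for $\theta\le c_2(\d)\sqrt{N}$: on $\s$ each entry satisfies $|\theta a_k|\le 1/2$, so its nearest integer is $0$, whence $\dist(\theta a,\Z^N)\ge\big(\sum_{k\in\s}(\theta a_k)^2\big)^{1/2}\ge c_1(\d,\rho)\,\theta>\g\theta$, contradicting the defining inequality; this dispenses with the Chebychev/intersection step and is somewhat more direct. The only cosmetic difference in the conclusion is that your argument shows every admissible $\theta$ exceeds $c_2(\d)\sqrt{N}$, hence $\LCD_{\a,\g}(a)\ge c_2(\d)\sqrt{N}$ rather than the strict inequality as stated (the paper's proof has the symmetric attainment-of-infimum subtlety when it sets $\theta:=\LCD_{\a,\g}(a)$); since the lemma is only ever used in the form $\LCD_{\a,c}(x)\ge c_0\sqrt{N}$, this is immaterial, and can in any case be repaired by halving $c_2$.
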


\begin{remark}
  The proof gives $c_1(\d,\rho) = \frac{1}{2} \rho^2 \sqrt{\d}$
  and $c_2(\d) = \frac{1}{2} \sqrt{\d}$.
\end{remark}

\begin{proof}
By Lemma~\ref{l: spread}, there exists a set $\s_1 \subseteq
\{1,\ldots, N\}$ of size
$$
|\s_1| \ge \frac{1}{2} \rho^2 \d N
$$
and such that
\begin{equation}                            \label{spread}
  \frac{\rho}{\sqrt{2N}} \le |a_k| \le \frac{1}{\sqrt{\d N}}
  \qquad \text{for $k \in \s_1$.}
\end{equation}

Let $\theta := \LCD_{\a,\g}(a)$. Then there exists $p \in \Z^N$ such that
$$
\|\theta a - p\|_2 < \g \|\theta a\|_2 = \g \theta.
$$
This shows in particular that $\theta > 0$; dividing by $\theta$ gives
$$
\Big\| a - \frac{p}{\theta} \Big\|_2 < \g.
$$
Then by Chebychev inequality, there exists a set $\s_2 \subseteq \{1,\ldots, N\}$
of size
$$
|\s_2| > N - \frac{1}{2} \rho^2 \d N
$$
and such that
\begin{equation}                            \label{diophantine}
\Big| a_k - \frac{p_k}{\theta} \Big|
  < \frac{\sqrt{2}}{\rho \sqrt{\d}} \cdot \frac{\g}{\sqrt{N}}
  \qquad \text{for $k \in \s_2$.}
\end{equation}

Since $|\s_1| + |\s_2| > N$, there exists $k \in \s_1 \cap \s_2$.
Fix this $k$.
By the left hand side of \eqref{spread}, by \eqref{diophantine} and the
assumption on $\g$ we have:
$$
\Big| \frac{p_k}{\theta} \Big| \ge \frac{\rho}{\sqrt{2N}} -
\frac{\sqrt{2}}{\rho \sqrt{\d}} \cdot \frac{\g}{\sqrt{N}}
> 0.
$$
Thus $|p_k| > 0$; since $p_k$ is an integer, this yields $|p_k| \ge 1$.
Similarly, using the right hand side of \eqref{spread}, \eqref{diophantine} and the
assumption on $\g$, we get
$$
\Big| \frac{p_k}{\theta} \Big| \le \frac{1}{\sqrt{\d N}} +
\frac{\sqrt{2}}{\rho \sqrt{\d}} \cdot \frac{\g}{\sqrt{N}} <
\frac{2}{\sqrt{\d N}}.
$$
Since $|p_k| \ge 1$, this yields
$$
|\theta| > \frac{\sqrt{\d N}}{2}.
$$
This completes the proof.
\end{proof}

\section{The distance problem and arithmetic structure}  \label{s: distance}
%-----------------------------------------------------

Here we use the Small Ball Probability Theorem \ref{SBP} to give an optimal
bound for the distance problem: {\em how close is a random vector $X$ in $\R^N$
from an independent random subspace $H$ of codimension $m$}?

If $X$ has the standard normal distribution, then the distance
does not depend on the distribution of $H$. Indeed, for an arbitrary fixed $H$,
the distance $\dist(X,H)$ is distributed identically with the Euclidean norm of a standard
normal random vector in $\R^m$. Therefore,
$$
\dist(X,H) \sim \sqrt{m} \qquad \text{with high probability}.
$$
More precisely, standard computations give for every $\e > 0$ that
\begin{equation}                                    \label{dist normal}
  \P \big( \dist(X,H) < \e \sqrt{m} \big) \le (C\e)^m.
\end{equation}
However, if $X$ has a more general distribution with independent coordinates,
the distance $\dist(X,H)$ may strongly depend on the subspace $H$.
For example, if the coordinates of $X$ are $\pm 1$ symmetric random variables.
then for $H = \{ x:\; x_1 + x_2 = 0\}$ the distance equals $0$ with probability $1/2$,
while for $H = \{ x:\; x_1 + \cdots + x_N = 0\}$ the distance equals $0$ with probability
$\sim 1/\sqrt{N}$.

Nevertheless, a version of the distance bound \eqref{dist normal}
remains true for general distributions if $H$ is a random subspace. For spaces
of codimension $m=1$, this result was proved in \cite{RV}. In this paper,
we prove an optimal distance bound for general dimensions.

\begin{theorem}[Distance to a random subspace]                       \label{distance}
  Let $X$ be a vector in $\R^N$ whose coordinates
  are independent and identically distributed mean zero subgaussian random variables
  with unit variance.
  Let $H$ be a random subspace in $\R^N$ spanned by $N-m$ vectors, $0< m < \tilde{c} N$,
  whose coordinates are independent and identically distributed mean zero subgaussian
  random variables with unit variance,
  independent of $X$.
  Then, for every $v \in \R^N$ and every $\e > 0$, we have
  $$
  \P \big( \dist(X,H+v) < \e \sqrt{m} \big)
  \le (C\e)^m + e^{-cN},
  $$
  where $C, c, \tilde{c} > 0$ depend only on the subgaussian moments.
\end{theorem}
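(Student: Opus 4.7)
My plan is to pass to the orthogonal complement $E := H^\perp$ and apply the Small Ball Probability Theorem~\ref{SBP} to the random vector $P_E X$, with the main input being that a random subspace $E$ carries no arithmetic structure, in the sense of having exponentially large $\LCD$.

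With probability $\ge 1-e^{-cN}$ the random subspace $H$ has dimension $N-m$, so $E := H^\perp$ has dimension exactly $m$; condition on this event. Since $X$ is independent of $E$, and $\dist(X, H+v) = \|P_E X - P_E v\|_2$, the problem reduces to estimating the L\'evy concentration function of
\[
  P_E X = \sum_{k=1}^N X_k\, a_k, \qquad a_k := P_E e_k,
\]
conditional on $E$. Identifying $E \cong \R^m$ via an orthonormal basis places us in the setting of Theorem~\ref{SBP}. The non-degeneracy condition holds with equality, since for every $x \in E$, Parseval gives $\sum_k \pr{a_k}{x}^2 = \sum_k \pr{e_k}{x}^2 = \|x\|_2^2$. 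The anti-concentration hypothesis $\LL(X_k,1) \le 1-b$ is Lemma~\ref{nontrivial conc}. Hence, provided $\e \ge \sqrt{m}/\LCD_{\a,\g}(a)$,
\[
  \LL\bigl(P_E X,\, \e\sqrt{m}\bigr) \le (C\e/\g)^m + C^m e^{-2b\a^2}.
\]

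The crux is then to show that, with probability $\ge 1-e^{-cN}$ over the random subspace $E$,
\[
  \LCD_{\a,\g}(a) \ge D, \qquad D := c_0\sqrt{m}\,e^{c_0 N/m},
\]
for a small absolute $\g$ and $\a := c\sqrt{N}$. A violation would produce $\theta \in E$ of norm $\le D$ whose nearest integer point $p \in \Z^N \cap B(0,2D)$ satisfies $\|P_H p\|_2 \le \g\|p\|_2$, i.e.\ $p$ lies unusually close to $E$. For a fixed $p$, this event is driven by the subgaussian sums $\pr{Y_j}{p}$ over the independent vectors $Y_1,\dots,Y_{N-m}$ spanning $H$, and its probability is exponentially small in $N-m$ by a L\'evy concentration bound per $j$ combined with independence across $j$. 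A naive union bound over $\Z^N \cap B(0,2D)$ would ruin this gain by a catastrophic factor $D^N$; the point is that witnesses $\theta$ actually live in the $m$-dimensional space $E$, so $E \cap B(0,D)$ admits a scale-adapted dyadic net of cardinality $e^{O(N)}$, and it suffices to union-bound over centers. This is the main obstacle, and it is exactly what the paper's forthcoming Theorem on $\LCD$ of random subspaces (previewed in the introduction) is designed to provide.

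Assembly is then routine. Fix $\a = c\sqrt{N}$, so that $C^m e^{-2b\a^2} \le e^{-c'N}$ provided $m < \tilde{c} N$ with $\tilde{c}$ sufficiently small. On the good event for $E$, Theorem~\ref{SBP} gives for every $\e \ge \sqrt{m}/D$ that
\[
  \LL(P_E X,\, \e\sqrt{m}) \le (C'\e)^m + e^{-c'N},
\]
which dominates $\P(\dist(X,H+v) < \e\sqrt{m})$ upon setting $w := P_E v$ and using $\P(\|P_E X - w\|_2 < \e\sqrt{m}) \le \LL(P_E X, \e\sqrt{m})$. For $\e < \sqrt{m}/D$, monotonicity of $\LL$ in its second argument and the choice of $D$ force the $(C\e)^m$ term to be $\le e^{-c'N}$, so the same bound holds. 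Adding the $e^{-cN}$ probability of the bad event for $E$ completes the proof.
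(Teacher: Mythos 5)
Your reduction of the distance bound to Theorem~\ref{SBP} is correct and is essentially the paper's own first step: writing $\dist(X,H+v)=\|\sum_k a_k X_k - P_{H^\perp}v\|_2$ with $a_k=P_{H^\perp}e_k$, checking the isotropy condition by Parseval, and identifying the relevant least common denominator with that of the subspace $H^\perp$ (since $\theta\cdot a=\theta$ for $\theta\in H^\perp$) -- this is exactly the paper's Theorem~\ref{distance general}. (A cosmetic point: Lemma~\ref{nontrivial conc} is stated for $\e\in(0,1)$, so one applies it at $\e=1/2$ and rescales to $\xi_k/2$, as the paper does.) Your assembly for all $\e>0$, using monotonicity below the threshold $\e\sim\sqrt{m}/D$ and the choice $\a=c\sqrt N$, also matches the paper.

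The genuine gap is the structure theorem -- the statement that $\LCD_{\a,\g}(H^\perp)\ge c\sqrt{N}e^{cN/m}$ with probability $1-e^{-cN}$ -- which is where essentially all of the work behind Theorem~\ref{distance} lies, and which you neither prove nor sketch in a way that could work. First, you propose to union-bound over a net of $E\cap B(0,D)$ with $E=H^\perp$; but $E$ is random, determined by the very vectors $Y_1,\dots,Y_{N-m}$ whose randomness you want to exploit, so no net of $E$ can be fixed in advance: any valid discretization must be deterministic. Second, your per-witness estimate is far too weak: for a fixed integer point $p$, constant anti-concentration per row gives at best a probability of order $e^{-c(N-m)}$ for the event $\|P_H p\|_2\le\g\|p\|_2$, whereas the number of integer points in $B(0,2D)$ with $D\approx\sqrt{N}e^{cN/m}$ is of order $(CD/\sqrt N)^N=e^{cN^2/m}$, so the union bound collapses. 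The paper's proof of Theorem~\ref{LCD random} avoids both problems: it works on the unit sphere, removes compressible vectors first (Lemma~\ref{random incompressible}), and stratifies the incompressible vectors into deterministic level sets $S_D$ according to their LCD; on $S_D$ the one-dimensional case of Theorem~\ref{SBP} plus tensorization gives the much stronger per-vector bound $(Ct+C/D+Ce^{-c\a^2})^{N-m}$ -- the crucial $C/D$ term is exponentially small precisely because vectors in $S_D$ already have LCD at least $D$ -- and the arithmetic structure of $S_D$ yields a special net of rescaled integer points of cardinality $(C_0D/\sqrt N)^N$ (Lemma~\ref{net}), which that bound does beat; a norm bound $\|B\|\le K\sqrt N$ transfers the estimate from the net to $S_D$, and a dyadic union over $D$ finishes. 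Citing the forthcoming Theorem~\ref{LCD random} is reasonable bookkeeping, but as a proof of Theorem~\ref{distance} your argument is missing this entire mechanism, and the route you indicate for it would fail as stated.
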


\begin{remark}
  To explain the term $e^{-cN}$, consider $\pm 1$ symmetric random variables.
  Then with probability at least $2^{-n}$ the random vector $X$ coincides with one of
  the random vectors that span $H$, which makes the distance equal zero.
\end{remark}

We will deduce Theorem~\ref{distance} from a more general inequality that holds for
{\em arbitrary} fixed subspace $H$. This bound will depend on the arithmetic
structure of the subspace $H$, which we express using the least common
denominator.

For $\a > 0$ and $\g \in (0,1)$, the essential {\em least common denominator of a subspace}
$E$ in $\R^N$ is defined as
$$
\LCD_{\a,\g} (E) := \inf \{ \LCD_{\a,\g} (a) :\; a \in S(E) \}.
$$
Clearly,
$$
\LCD_{\a,\g}(E)
= \inf \Big\{ \|\theta\|_2 :\;
    \theta \in E, \, \dist(\theta, \Z^N) < \min(\g\|\theta\|_2,\a) \Big\}.
$$
Then Theorem~\ref{SBP} quickly leads to the following general distance bound:

\begin{theorem}[Distance to a general subspace]         \label{distance general}
  Let $X$ be a vector in $\R^N$ whose coordinates
  are independent and identically distributed mean zero subgaussian random variables
  with unit variance.
  Let $H$ be a subspace in $\R^N$ of dimension $N-m > 0$.
  Then for every  $v \in \R^N$, $\a > 0$, $\g \in (0,1)$, and for
  $$
  \e \ge \frac{\sqrt{m}}{\LCD_{\a,\g}(H^\perp)},
  $$
  we have
  $$
  \P \big( \dist(X,H+v) < \e \sqrt{m} \big)
  \le \Big( \frac{C\e}{\g} \Big)^m + C^m e^{-c\a^2}
  $$
  where $C, c > 0$ depend only on the subgaussian moment.
\end{theorem}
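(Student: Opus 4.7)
The plan is to reduce this statement to a direct application of Theorem~\ref{SBP}. First I would rewrite the distance as an orthogonal projection: setting $E = H^\perp$ (an $m$-dimensional subspace) and $w = P_E v$, we have
$$
\dist(X, H+v) = \|P_E X - w\|_2 = \Big\| \sum_{k=1}^N X_k a_k - w \Big\|_2,
$$
where $a_k := P_E e_k \in E \cong \R^m$ and $X_k$ are the (i.i.d.\ subgaussian, mean zero, unit variance) coordinates of $X$. Thus the left-hand side equals $\LL(S, \e\sqrt{m})$ with $S = \sum_k X_k a_k$, after taking the supremum over shifts $w$.

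Next I would verify the two hypotheses needed to apply Theorem~\ref{SBP} with this sequence $a = (a_1,\ldots,a_N)$. For the non-degeneracy condition \eqref{super-isotropy}, for any $x \in E$, $\pr{a_k}{x} = \pr{P_E e_k}{x} = \pr{e_k}{x}$, so $\sum_k \pr{a_k}{x}^2 = \|x\|_2^2$ holds with equality. For the concentration bound on $\xi = X_k$, I would invoke Lemma~\ref{nontrivial conc}, which gives $\LL(X_k,1) \le 1-b$ for some $b > 0$ depending only on the subgaussian moment.

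The key identification is between $\LCD_{\a,\g}(a)$ (LCD of the vector sequence) and $\LCD_{\a,\g}(H^\perp)$ (LCD of the subspace). For $\theta \in \R^m \cong E$, the vector $\theta \cdot a = (\pr{\theta}{a_k})_{k=1}^N = (\pr{\theta}{P_E e_k})_{k=1}^N = (\theta_k)_{k=1}^N$ is simply $\theta$ viewed as an element of $\R^N$ (its components in the standard basis), and $\|\theta \cdot a\|_2 = \|\theta\|_2$. Substituting into the definition of $\LCD_{\a,\g}(a)$ then recovers exactly the formula
$$
\LCD_{\a,\g}(E) = \inf\{\|\theta\|_2 : \theta \in E,\ \dist(\theta,\Z^N) < \min(\g\|\theta\|_2,\a)\},
$$
so the two LCDs coincide.

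With these pieces in place, Theorem~\ref{SBP} applied to $S$ yields, whenever $\e \ge \sqrt{m}/\LCD_{\a,\g}(H^\perp)$,
$$
\P(\dist(X,H+v) < \e\sqrt{m}) \le \LL(S,\e\sqrt{m}) \le \Big( \tfrac{C\e}{\g\sqrt{b}} \Big)^m + C^m e^{-2b\a^2},
$$
and absorbing $b$ into the constants $C, c$ (since $b$ depends only on the subgaussian moment) gives the desired bound. Because everything reduces so cleanly, I do not expect a real obstacle; the only step requiring care is the LCD identification above, and making sure one handles the shift $w$ correctly by taking $\LL(S,\cdot)$ rather than a single-point probability.
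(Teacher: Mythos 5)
Your argument is essentially the paper's own proof: you rewrite the distance as $\|P_{H^\perp}(X-v)\|_2 = \|\sum_{k=1}^N \xi_k a_k - w\|_2$ with $a_k = P_{H^\perp}e_k$, note that \eqref{super-isotropy} holds with equality on $H^\perp$, observe that $\theta\cdot a=\theta$ for $\theta\in H^\perp$ so that $\LCD_{\a,\g}(a)=\LCD_{\a,\g}(H^\perp)$, and then apply Theorem~\ref{SBP}; this is exactly the route taken in the paper. The one imprecision is your claim that Lemma~\ref{nontrivial conc} gives $\LL(X_k,1)\le 1-b$: that lemma is stated only for radii $\e\in(0,1)$, and the claim is in fact false at radius $1$ for the central example of $\pm1$ Bernoulli coordinates, where $\LL(\xi,1)=1$; so Theorem~\ref{SBP} cannot be invoked verbatim with the $\xi_k$ themselves. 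The standard fix, which is what the paper does, is to apply Lemma~\ref{nontrivial conc} at radius $1/2$ and pass to rescaled variables (equivalently, apply Theorem~\ref{SBP} to $2S=\sum_k a_k(2\xi_k)$ at radius $2\e\sqrt{m}$, since $\LL(2\xi_k,1)=\LL(\xi_k,1/2)\le 1-b$); this only changes the absolute constants in the final bound, so your proof goes through after this minor adjustment.
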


\begin{proof}
Let us write $X$ in coordinates, $X = (\xi_1,\ldots,\xi_N)$.
By Lemma~\ref{nontrivial conc} and the remark below it,
all coordinates of $X$ satisfy the inequality
$\LL(\xi_k,1/2) \le 1 - b$ for some $b > 0$ that depends only on
the subgaussian moment of $\xi_k$.
Hence the random variables $\xi_k/2$ satisfy the assumption in Theorem~\ref{SBP}.

Next, we connect the distance to a sum of independent random vectors:
\begin{equation}                    \label{dist as sum}
  \dist(X,H+v) = \|P_{H^\perp} (X-v)\|_2 = \Big\| \sum_{k=1}^N a_k \xi_k - w \Big\|_2,
\end{equation}
where
$$
a_k = P_{H^\perp} e_k, \quad w = P_{H^\perp} v,
$$
and where $e_1,\ldots,e_N$ denotes the canonical basis of $\R^N$. Therefore,
the sequence of vectors $a = (a_1, \ldots, a_N)$ is in the isotropic
position:
$$
\sum_{k=1}^N \pr{a_k}{x}^2 = \norm{x}_2^2 \qquad \text{for any }
  x \in H^\perp,
$$
so we can use Theorem~\ref{SBP} in the space $H^\perp$ (identified with $\R^m$
by a suitable isometry).

For every $\theta = (\theta_1,\ldots,\theta_N) \in H^\perp$ and every $k$ we have
$
\< \theta, a_k \> = \< P_{H^\perp} \theta, e_k \>
= \< \theta, e_k \>  = \theta_k,
$
so
$$
\theta \cdot a = \theta
$$
where the right hand side is considered as a vector in $\R^N$.
Therefore the least common denominator of a subspace can be
expressed by that of a sequence of vectors $a=(a_1 \etc a_N)$:
$$
\LCD_{\a,\g}(H^\perp) = \LCD_{\a,\g}(a).
$$
The theorem now follows directly from Theorem~\ref{SBP}.
\end{proof}

In order to deduce the Distance Theorem~\ref{distance}, it will now suffice to
bound below the least common denominator of a random subspace $H^\perp$.
Heuristically, the randomness should remove any arithmetic
structure from the subspace,
thus making the least common denominator exponentially large. Our next results
shows that this is indeed true.

\begin{theorem}[Structure of a random subspace]              \label{LCD random}
  Let $H$ be a random subspace in $\R^N$ spanned by $N-m$ vectors, $1 \le m < \tilde{c} N$,
  whose coordinates are independent and identically distributed mean zero subgaussian
  random variables with unit variance.
  Then, for $\a = c \sqrt{N}$, we have
  $$
  \P \big( \LCD_{\a,c}(H^\perp) < c \sqrt{N} e^{cN/m} \big)
  \le e^{-cN},
  $$
  where $c \in (0,1)$ and $\tilde{c} \in (0,1/2)$ depend only on the subgaussian moment.
\end{theorem}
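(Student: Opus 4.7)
Set $W = [X_1 \mid \cdots \mid X_{N-m}]$, so $H^\perp = \ker(W^T)$. The plan is to bound the complementary event that some ``witness'' $\theta \in H^\perp$ exists with $\|\theta\|_2 \le D_0 := c\sqrt{N} e^{cN/m}$ and $\dist(\theta, \Z^N) < \min(c\|\theta\|_2, \alpha)$, via a union bound over the nearest integer approximant $p \in \Z^N$. First, applying Lemma~\ref{l: compressible} to $W^T$ (valid since $m < \tilde{c} N \le N/2$) shows that with probability $\ge 1 - e^{-cN}$ the subspace $H^\perp$ contains no compressible unit vector; Lemma~\ref{LCD sqrt N} then yields $\LCD(a) \ge c_2\sqrt{N}$ for every $a \in S(H^\perp)$, so any witness automatically satisfies $\|\theta\|_2 \ge c_2\sqrt{N}$. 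Partitioning $\|\theta\|_2$ into $O(N/m)$ dyadic intervals $[D/2, D]$ with $D \in [c_2\sqrt{N}, D_0]$, and observing that $\theta \in H^\perp$ forces $\|P_H p\|_2 = \|P_H(p-\theta)\|_2 \le \|p-\theta\|_2 < \alpha$, it suffices to bound, for each $D$, the probability that some $p \in \Z^N$ with $\|p\|_2 \in [D/4, 2D]$ satisfies $\|P_H p\|_2 < \alpha$.

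For fixed such $p$, the operator-norm bound $\|W\|_{\mathrm{op}} \le C\sqrt{N}$ from Proposition~\ref{norm} (holding w.p.\ $\ge 1 - e^{-cN}$), combined with $\|P_H p\|_2 \ge \|W^T p\|_2/\|W\|_{\mathrm{op}}$, reduces matters to estimating $\P(\|W^T p\|_2 \le C\alpha\sqrt{N})$. The key technical input is the sharp polynomial small-ball bound
\[
  \P\bigl(\|W^T p\|_2 \le s\bigr) \;\le\; \Bigl(\frac{C s}{\|p\|_2\sqrt{N-m}}\Bigr)^{N-m},
\]
proved by applying Theorem~\ref{SBP} in dimension one to each independent coordinate $\langle X_i, p\rangle$ (iid mean-zero subgaussian of variance $\|p\|_2^2$, with the 1D LCD of $p/\|p\|_2$ at least of order $\sqrt{N}$ because the approximately incompressible $p$ inherits the incompressibility of $\theta$ via Lemma~\ref{LCD sqrt N}), and tensorizing the resulting coordinate-wise L\'{e}vy concentration into a ball concentration in $\R^{N-m}$. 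With $s = C\alpha\sqrt{N}$, $\alpha = c\sqrt{N}$, and $m \le N/2$, the per-$p$ bound becomes $(C'c\sqrt{N}/\|p\|_2)^{N-m}$.

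Using the volumetric estimate $|\Z^N \cap B(0, 2D)| \le (CD/\sqrt{N})^N$, the union bound over $p$ in a single dyadic shell gives
\[
  \bigl(CD/\sqrt{N}\bigr)^N \cdot \bigl(C'c\sqrt{N}/D\bigr)^{N-m}
  \;=\; \bigl(CD/\sqrt{N}\bigr)^m \cdot (CC'c)^{N-m}.
\]
For $D \le D_0$ the first factor is $\le (Cc_1)^m e^{cN}$; choosing $c$ small enough that $CC'c \le e^{-2}$ makes the second factor at most $e^{-2(N-m)}$. Since $\tilde{c} < 1/2$, the product is $\le (Cc_1)^m e^{cN - 2(N-m)} \le e^{-c''N}$, and summing over the $O(N/m) \le N$ dyadic values plus the $e^{-cN}$ errors from the operator-norm and compressibility steps yields the claimed bound.

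The principal obstacle is the sharp polynomial-in-$s$ small-ball estimate for $\|W^T p\|_2$. A naive chi-squared-type Laplace-transform bound on $\sum_i \langle X_i, p\rangle^2$ only yields an exponential $e^{-c(N-m)}$ decay that is independent of $s$, which cannot beat the integer-point count $(CD/\sqrt{N})^N$ once $D$ grows past $\sqrt{N}$. Extracting the necessary polynomial factor requires combining the one-dimensional SBP estimates from Theorem~\ref{SBP}---whose applicability rests crucially on the bound $\LCD(p/\|p\|_2) \gtrsim \sqrt{N}$ coming from the incompressibility reduction---with a multidimensional tensorization that captures the joint (ball) rather than merely coordinate-wise (box) concentration of $W^T p \in \R^{N-m}$.
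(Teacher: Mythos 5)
Your reduction (witness $\theta \in H^\perp$, nearest integer point $p$, then $\|W^Tp\|_2=\|W^TP_Hp\|_2\le\|W\|\,\|p-\theta\|_2\le C\a\sqrt N$ on the norm event, union bound over $p$ in a dyadic shell) is sound in outline, and your final arithmetic $(CD/\sqrt N)^N(C'c\sqrt N/D)^{N-m}=(CD/\sqrt N)^m(CC'c)^{N-m}$ is the same computation the paper performs. The gap is the central estimate you rely on: $\P\bigl(\|W^Tp\|_2\le s\bigr)\le (Cs/(\|p\|_2\sqrt{N-m}))^{N-m}$. Theorem~\ref{SBP} in dimension one gives the per-coordinate bound $\P(|\langle X_i,p/\|p\|_2\rangle|\le\e)\le C\e+Ce^{-c\a^2}$ only for $\e\ge 1/\LCD_{\a,\g}(p/\|p\|_2)$, and in your application $\e\sim\a/\|p\|_2\sim c\sqrt N/D$, so you need $\LCD(p/\|p\|_2)\gtrsim D/\sqrt N$. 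The incompressibility transfer you invoke yields only $\LCD\gtrsim\sqrt N$, which is insufficient whenever $D\gg N$ --- i.e.\ in most of the range $D\le c\sqrt N e^{cN/m}$. Worse, the bound you need is genuinely false for some of the integer points you union over: take $p=q\,p'$ with $p'\in\{-1,1\}^N$ and $q\approx D/\sqrt N$; then $p/\|p\|_2=p'/\|p'\|_2$ has $\LCD\le\sqrt N$, and for $\pm1$ matrix entries $\P(|\langle X_i,p\rangle|\le C\a)\ge\P(\langle X_i,p'\rangle=0)\gtrsim 1/\sqrt N$, so the per-$p$ probability is at least of order $(c/\sqrt N)^{N-m}$. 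Multiplied by the integer-point count $(CD/\sqrt N)^N$ this exceeds $1$ by a huge margin in the upper range of $D$ (e.g.\ $D\sim\sqrt N e^{cN/m}$ with small $m$), so the union bound as you set it up collapses; nothing in your argument excludes such $p$, and decomposing by $\|\theta\|_2$ rather than by the arithmetic structure of the direction does not prevent them from appearing in a high shell.

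This is precisely the difficulty the paper's proof is organized around. It stratifies $S(H^\perp)$ by the LCD of the \emph{unit direction}, $S_D=\{x\in\Incomp:\ D\le\LCD_{\a,c}(x)<2D\}$, and the union bound runs not over raw integer points but over a net chosen \emph{inside} $S_D$ (Lemma~\ref{net}, whose points are selected from $S_D$ near the rational directions $p/\|p\|_2$). Hence every vector fed into the single-vector estimate (Lemma~\ref{for single vector}) has $\LCD\ge D$, which is exactly the hypothesis needed to produce the factor $(C\nu\sqrt N/D)^{N-m}$; the bound is then transferred from the net to all of $S_D$ using $\|B\|\le K\sqrt N$ (Lemma~\ref{for level set}), and the dominance of the $Ct$ term over $C/D$ and $Ce^{-c\a^2}$ is checked under the constraint $D\le c_1\sqrt N e^{c_1N/m}$ --- a verification your sketch also omits. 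To rescue your scheme you would have to treat integer approximants of large content separately (replace $p$ by $p/\gcd$ and reassign it to a lower shell), which amounts to re-indexing the shells by $\LCD(\theta/\|\theta\|_2)$ instead of $\|\theta\|_2$, i.e.\ to reconstructing the paper's level-set decomposition.
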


Assuming that this result holds, we can complete the proof of the
Distance Theorem~\ref{distance}.

\begin{proof}[Proof of Theorem~\ref{distance}.]
Consider the event
$$
\EE := \big\{ \LCD_{\a,c}(H^\perp) \ge c \sqrt{N} e^{cN/m} \big\}.
$$
By Theorem~\ref{LCD random}, $\P(\EE^c) \le e^{-cN}$.

Let us condition on a realization of $H$ in $\EE$. By the independence of
$X$ and $H$, Theorem~\ref{distance general} used with $\a = c \sqrt{N}$
and $\g = c$ gives
$$
\P \big( \dist(X,H) < \e \sqrt{m} \;|\; \EE \big) \le (C_1 \e)^m +
C^m e^{-c_1 N}
$$
for every
$$
\e > C_2 \sqrt{\frac{m}{N}} e^{-cN/m}.
$$
Since $m \le \tilde{c} N$, with an appropriate choice of $\tilde{c}$
we get
\[
  C_2 \sqrt{\frac{m}{N}} e^{-cN/m} \le \frac{1}{C_1} e^{-c_3 N/m}
  \quad \text{and }
  C^m e^{-c_1 N} \le e^{-c_3 N}.
\]
 Therefore, for every $\e
> 0$,
$$
\P \big( \dist(X,H) < \e \sqrt{m} \;|\; \EE \big) \le (C_1 \e)^m + 2
e^{-c_3 N}
  \le (C_1 \e)^m +
e^{-c_4 N}.
$$
By the estimate on the probability of $\EE^c$, this completes the proof.
\end{proof}

\subsection{Proof of the Structure Theorem \ref{LCD random}}
%..............................................

Note first, that throughout the proof we can assume that $N>N_0$,
where $N_0$ is a suitably large number, which may depend only the
subgaussian moment. Indeed, the assumption on $m$ implies that
$N > m/\tilde{c} \ge 1/\tilde{c}$. Choosing $\tilde{c}>0$ suitably small
depending on the subgaussian moment,  we can make $N_0$ suitably
large.

Let $X_1,\ldots,X_{N-m}$ denote the independent random vectors that
span the subspace $H$. Consider an $(N-m) \times N$ random matrix
$B$ with rows $X_k$. Then
$$
H^\perp \subseteq \ker(B).
$$
Therefore, for every set $S$ in $\R^N$ we have:
\begin{equation}                                        \label{navigation}
 \inf_{x \in S} \|Bx\|_2 > 0
  \text{ implies } H^\perp \cap S = \emptyset.
\end{equation}
This observation will help us to ``navigate'' the random subspace
$H^\perp$ away from undesired sets $S$ on the unit sphere.

We start with a variant of Lemma~3.6 of \cite{RV}; here we use the
concept of compressible and incompressible vectors in $\R^N$ rather
than $\R^n$.

\begin{lemma}[Random subspaces are incompressible]      \label{random incompressible}
  There exist $\d, \rho \in (0,1)$ such that
  $$
  \P \big( H^\perp \cap S^{N-1} \subseteq \Incomp(\d,\rho) \big)
  \ge 1 - e^{-cN}.
  $$
\end{lemma}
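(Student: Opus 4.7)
The plan is to apply the Invertibility-for-compressible-vectors lemma (Lemma~\ref{l: compressible}) to the matrix $B$ and then use the ``navigation'' observation \eqref{navigation} to transfer the result to $H^\perp$. Recall that $B$ is the $(N-m) \times N$ matrix whose rows are the i.i.d.\ subgaussian vectors $X_1,\ldots,X_{N-m}$ spanning $H$; in particular the entries of $B$ are i.i.d.\ mean zero subgaussian random variables with unit variance.

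First I would check that the dimensions meet the hypothesis of Lemma~\ref{l: compressible}. Choose $\tilde c \le 1/2$, so that $N-m \ge N/2$. With the roles of the parameters in Lemma~\ref{l: compressible} being played by $N-m$ rows and $N$ columns, the lemma supplies constants $\d,\rho,c_3 > 0$, depending only on the subgaussian moment, such that
\[
\P\Bigl(\inf_{x \in \Comp(\d,\rho) \cap S^{N-1}} \|Bx\|_2 \le c_3 \sqrt{N-m}\,\Bigr) \le e^{-c_3(N-m)} \le e^{-c_3 N/2}.
\]
Here $\Comp(\d,\rho)$ is understood as a subset of $S^{N-1}$.

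On the complementary event, which has probability at least $1 - e^{-c_3 N/2}$, one has $\|Bx\|_2 > 0$ for every $x \in \Comp(\d,\rho) \cap S^{N-1}$. By \eqref{navigation} this implies $H^\perp \cap \Comp(\d,\rho) \cap S^{N-1} = \emptyset$, i.e.\ $H^\perp \cap S^{N-1} \subseteq \Incomp(\d,\rho)$, which is exactly what we want (after replacing $c_3/2$ by a single constant $c$).

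I do not expect any serious obstacle: the whole argument is just the observation that $H^\perp \subseteq \ker(B)$ combined with the already-established lower bound on $\|Bx\|_2$ over compressible $x$. The only subtlety to flag is the parameter bookkeeping, namely verifying that the aspect ratio condition $N - m \ge N/2$ of Lemma~\ref{l: compressible} is met, which is guaranteed by taking $\tilde c \le 1/2$ (and can be further reduced later if other parts of the paper require it).
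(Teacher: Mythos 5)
Your proposal is correct and follows essentially the same route as the paper: apply Lemma~\ref{l: compressible} to the $(N-m)\times N$ matrix $B$ (valid since $m < \tilde c N$ with $\tilde c < 1/2$ gives $N-m \ge N/2$), then use \eqref{navigation} to conclude $H^\perp \cap \Comp(\d,\rho) = \emptyset$ with probability at least $1 - e^{-cN}$. The only cosmetic difference is the bookkeeping of whether the threshold and exponent read $\sqrt{N-m}$, $N-m$ or $\sqrt{N}$, $N$, which is immaterial since only positivity of $\|Bx\|_2$ and an exponential-in-$N$ probability are needed.
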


\begin{proof}
  Let $B$ be the $(N-m) \times N$ matrix defined above. Since $N-m
  > (1- \tilde{c}) N$ and $\tilde{c}<1/2$, we can apply Lemma \ref{l: compressible} for
  the matrix $B$. Thus, there exist $\d, \rho \in (0,1)$ such that
  $$
  \P \big( \inf_{x \in \Comp (\d, \rho)} \norm{Bx}_2 \ge c_3 \sqrt{N} \big)
  \ge 1- e^{-c_3 N}.
  $$
  By \eqref{navigation}, $H^\perp \cap \Comp (\d, \rho) = \emptyset$
  with probability at least $1- e^{-c_3 N}$.
\end{proof}

Fix the values of $\d$ and $\rho$ given by Lemma~\ref{random incompressible}
for the rest of this section. We will further decompose the set of incompressible vectors
into level sets $S_D$ according to the value of the least common denominator $D$.
We shall prove a nontrivial lower bound on $\inf_{x \in S_D} \|Bx\|_2 > 0$
for each level set up to $D$ of the exponential order. By \eqref{navigation},
this will mean that $H^\perp$ is disjoint from every such level set.
Therefore, all vectors in $H^\perp$ must have exponentially large least
common denominators $D$. This is Theorem~\ref{LCD random}.

\medskip

Let $\a = \mu \sqrt{N}$, where $\mu>0$ is a small number to be chosen
later, which depends only on the subgaussian moment. By Lemma~\ref{LCD sqrt N},
$$
\LCD_{\a,c}(x) \ge c_0 \sqrt{N} \qquad \text{ for every $x \in
\Incomp$}.
$$

\begin{definition}[Level sets]                  \label{def level sets}
  Let $D \ge c_0 \sqrt{N}$.
  Define $S_D \subseteq S^{N-1}$ as
  $$
  S_D := \big\{ x \in \Incomp :\; D \le \LCD_{\a,c}(x) < 2D \big\}.
  $$
\end{definition}

To obtain a lower bound for $\|Bx\|_2$ on the level set, we proceed by an $\e$-net argument.
To this end, we first need such a bound for a single vector $x$.

\begin{lemma}[Lower bound for a single vector]          \label{for single vector}
  Let $x \in S_D$. Then for every $t > 0$ we have
  \begin{equation}                          \label{eq for single vector}
    \P \big( \|Bx\|_2 < t \sqrt{N} \big)
    \le \Big( Ct + \frac{C}{D} + Ce^{-c\a^2} \Big)^{N-m}.
  \end{equation}
\end{lemma}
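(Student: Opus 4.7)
The plan is to reduce the statement to a one-dimensional small ball estimate followed by tensorization over the $N-m$ independent rows of $B$. Write $B$ with rows $X_1,\ldots,X_{N-m}$, so that
$$
\|Bx\|_2^2 = \sum_{k=1}^{N-m} Y_k^2, \qquad Y_k := \pr{X_k}{x} = \sum_{j=1}^N x_j \xi_{k,j},
$$
where the $\xi_{k,j}$ are the i.i.d.\ entries of $X_k$. The $Y_k$ are i.i.d., which is the structural fact that lets the two ingredients combine cleanly.

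First I would control the one-dimensional concentration function $\LL(Y_k,\cdot)$ via Theorem~\ref{SBP} applied in dimension $m=1$ to the coefficient sequence $a=(x_1,\ldots,x_N)$. The non-degeneracy hypothesis \eqref{super-isotropy} reads $\|x\|_2^2 \ge 1$ and holds because $x \in S^{N-1}$; the scalar random variables $\xi_{k,j}$ (after rescaling by $1/2$ if needed) satisfy $\LL(\xi_{k,j}/2,1) \le 1-b$ by Lemma~\ref{nontrivial conc}. Since $x \in S_D$ gives $\LCD_{\alpha,c}(x) \ge D$, Theorem~\ref{SBP} yields for every $\varepsilon \ge 1/D$
$$
\LL(Y_k,\varepsilon) \le C\varepsilon + C e^{-c\alpha^2}.
$$
For $\varepsilon < 1/D$, monotonicity of the L\'evy function gives $\LL(Y_k,\varepsilon) \le \LL(Y_k,1/D) \le C/D + Ce^{-c\alpha^2}$. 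Combining, for \emph{every} $\varepsilon > 0$,
$$
\LL(Y_k,\varepsilon) \le C\varepsilon + C/D + C e^{-c\alpha^2}.
$$

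Next I would tensorize. This is the standard ``square-and-exponentiate'' lemma used throughout \cite{RV}: for any independent nonnegative random variables $\zeta_k$ with $\P(\zeta_k < \lambda) \le K\lambda + p_0$ for all $\lambda > 0$, one has $\P\bigl(\sum \zeta_k^2 < \lambda^2 n\bigr) \le (C(K\lambda + p_0))^n$. The proof of this is a Markov bound on $\E \exp(-\sum \zeta_k^2/\lambda^2) = \prod \E \exp(-\zeta_k^2/\lambda^2)$, estimating each factor via the integrated tail $\int_0^1 \P(\zeta_k < \lambda \sqrt{\log(1/s)})\,ds$. Applying this to $\zeta_k = |Y_k|$ with $K = C$ and $p_0 = C/D + Ce^{-c\alpha^2}$ gives
$$
\P\bigl( \|Bx\|_2 < t\sqrt{N-m} \bigr) \le \bigl( Ct + C/D + Ce^{-c\alpha^2} \bigr)^{N-m}.
$$
Since $m < \tilde c N < N/2$, we have $\sqrt{N-m} \ge \sqrt{N/2}$, so replacing the threshold $t\sqrt{N-m}$ by $t\sqrt{N}$ only absorbs a $\sqrt{2}$ into the constant $C$ and yields \eqref{eq for single vector}.

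The main technical point is the combination of the two terms in $\LL(Y_k,\varepsilon)$: the SBP theorem naturally produces a ``floor'' from the exponential tail $e^{-c\alpha^2}$ plus an arithmetic floor $1/D$ when $\varepsilon$ drops below $1/D$. The tensorization must then be the version that handles a nontrivial atom $p_0$ (not just the linear bound $K\lambda$), so that the floors $1/D$ and $e^{-c\alpha^2}$ survive to the final estimate as additive terms inside the base, rather than being raised to an unwanted power. Everything else is bookkeeping; the tensorization argument is the step to execute carefully.
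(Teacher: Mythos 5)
Your proposal is correct and follows essentially the same route as the paper: apply the Small Ball Probability Theorem~\ref{SBP} in dimension one to each coordinate $\zeta_j = \pr{X_j}{x}$ (using $\LCD_{\a,c}(x) \ge D$ for $x \in S_D$, with the scalar concentration hypothesis supplied by Lemma~\ref{nontrivial conc}), then combine the independent coordinates via the tensorization lemma (Lemma~2.2 of \cite{RV}) and pass from $\sqrt{N-m}$ to $\sqrt{N}$ using $N \le 2(N-m)$. Your explicit handling of the regime $\e < 1/D$ by monotonicity of the concentration function is a minor bookkeeping point the paper leaves implicit; otherwise the arguments coincide.
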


\begin{proof}
Denoting the elements of $B$ by $\xi_{jk}$, we can write the $j$-th coordinate of $Bx$ as
$$
(Bx)_j = \sum_{j=1}^N \xi_{jk} x_k =: \zeta_j, \qquad j = 1,\ldots, N-m.
$$
Now we can use the Small Ball Probability Theorem~\ref{SBP} in dimension $m=1$
for each of these random sums.
By Lemma~\ref{nontrivial conc} and the remark below it,
$\LL(\xi_{jk},1/2) \le 1 - b$ for some $b > 0$ that depends only on
the subgaussian moment of $\xi_{jk}$.
Hence the random variables $\xi_{jk}/2$ satisfy the assumption in Theorem~\ref{SBP}.
This gives for every $j$ and every $t>0$:
$$
\P (|\zeta_j| < t) \le Ct + \frac{C}{\LCD_{\a,c}(x)} + C e^{-c\a^2}
\le Ct + \frac{C}{D} + C e^{-c\a^2}.
$$
Since $\zeta_j$ are independent random variables, we can use
Tensorization Lemma~2.2 of \cite{RV} to conclude that
for every $t>0$,
$$
\P \Big( \sum_{j=1}^{N-m} |\zeta_j|^2 < t^2 (N-m) \Big)
\le \Big( C''t + \frac{C''}{D} + C''e^{-c\a^2} \Big)^{N-m}.
$$
This completes the proof, because $\|Bx\|_2^2 = \sum_{j=1}^{N-m} |\zeta_j|^2$
and $N \le 2(N-m)$ by the assumption.
\end{proof}

Next, we construct a small $\e$-net of the level set $S_D$.
Since this set lies in $S^{N-1}$, Lemma~\ref{nets} yields the existence
of an $(\sqrt{N}/D)$-net of cardinality at most $(CD / \sqrt{N})^N$.
This simple volumetric bound is not sufficient for our purposes,
and this is the crucial step where we explore the additive structure of $S_D$
to construct a smaller net.

\begin{lemma}[Nets of level sets]                   \label{net}
  There exists a $(4\a/D)$-net of $S_D$ of cardinality at most $(C_0 D / \sqrt{N})^N$.
\end{lemma}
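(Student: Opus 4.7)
The strategy is to exploit the arithmetic structure encoded in the definition of $S_D$ to build a net out of (normalized) integer lattice points. For each $x \in S_D$, the condition $\LCD_{\alpha,c}(x) \in [D, 2D)$ will supply an integer point $p \in \mathbb{Z}^N$ with $\|p\|_2 \lesssim D$ such that $x$ is essentially equal to $p/\|p\|_2$ up to an error of order $\alpha/D$. The net is then obtained by choosing, for each admissible $p$, one representative in $S_D$; counting such $p$ by volume in $\mathbb{R}^N$ gives exactly the bound $(C_0 D/\sqrt{N})^N$.

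Concretely, I would first choose $\mu > 0$ small enough (depending on $c_0$ and $c$) so that $\alpha = \mu\sqrt{N} < c D$ for every $D \ge c_0 \sqrt{N}$. For each $x \in S_D$, the definition of LCD then produces $\theta = \theta(x) \in [D, 2D]$ and $p = p(x) \in \mathbb{Z}^N$ with
\[
  \|\theta x - p\|_2 < \min(c\theta, \alpha) = \alpha.
\]
Since $\|\theta x\|_2 = \theta$, the triangle inequality yields $\|p\|_2 \le \theta + \alpha \le 2D + \alpha \le 3D$. Setting $y(x) := p/\|p\|_2$, I would bound
\[
  \|x - y(x)\|_2 \le \Big\|x - \frac{p}{\theta}\Big\|_2 + \Big\|\frac{p}{\theta} - \frac{p}{\|p\|_2}\Big\|_2
  < \frac{\alpha}{\theta} + \frac{|\|p\|_2 - \theta|}{\theta} \le \frac{2\alpha}{D},
\]
where I used $\|\theta x - p\|_2 < \alpha$ twice (once directly, once via $\bigl|\|p\|_2 - \theta\bigr| \le \|p - \theta x\|_2 < \alpha$).

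Next, I would let $P = \{p \in \mathbb{Z}^N : \|p\|_2 \le 3D\}$, and for each $p \in P$ that arises as $p(x)$ for some $x \in S_D$, select one such $x$ to include in $\NN$. By construction $\NN \subseteq S_D$, and for any $x \in S_D$ the representative $x_{p(x)} \in \NN$ satisfies $\|x - x_{p(x)}\|_2 \le \|x - y(x)\|_2 + \|x_{p(x)} - y(x)\|_2 \le 4\alpha/D$. Hence $\NN$ is a $(4\alpha/D)$-net of $S_D$.

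Finally, I would count $|\NN| \le |P|$ by the unit-cube-per-lattice-point trick: every $p \in P$ is the center of a unit cube contained in $B(0, 3D + \sqrt{N}/2)$, and since $D \ge c_0\sqrt{N}$ we have $3D + \sqrt{N}/2 \le C'D$ for some $C' = C'(c_0)$. Using the standard volume bound $\vol(B(0,R)) \le (CR/\sqrt{N})^N$ in $\mathbb{R}^N$ (via Stirling), this gives $|\NN| \le (C_0 D/\sqrt{N})^N$, completing the proof. The only nonroutine point is choosing $\mu$ small enough to guarantee $\alpha < c\theta$ for $\theta \ge D$; there is no real obstacle, only careful bookkeeping of constants.
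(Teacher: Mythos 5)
Your proposal is correct and follows essentially the same route as the paper: use the definition of $\LCD_{\a,c}(x)\in[D,2D)$ to produce $p\in\Z^N\cap B(0,3D)$ with $x$ within $2\a/D$ of $p/\|p\|_2$, count such lattice points volumetrically using $D\ge c_0\sqrt{N}$, and then select one representative of $S_D$ per lattice direction to get a $(4\a/D)$-net inside $S_D$. Your extra step of shrinking $\mu$ to force $\min(c\theta,\a)=\a$ is unnecessary (the minimum is always at most $\a$, and the needed inequality $\a\le D$ can be assumed since otherwise the conclusion is trivial, as in the paper), but it is harmless.
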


\begin{remark}
  Recall that $\a$ is chosen as a small proportion of $\sqrt{N}$. Hence
  Lemma~\ref{net} gives a better bound than the standard volumetric bound in
  Lemma~\ref{nets}.
\end{remark}

\begin{proof}
We can assume that $4\a/D \le 1$, otherwise the conclusion is trivial.
For $x \in S_D$, denote
$$
D(x) := \LCD_{\a,c}(x).
$$
By the definition of $S_D$, we have $D \le D(x) < 2D$.
By the definition of the least common denominator, there exists $p \in \Z^N$ such that
\begin{equation}                    \label{Dx x}
  \|D(x)x - p\|_2 < \a.
\end{equation}
Therefore
$$
\Big\| x - \frac{p}{D(x)} \Big\|_2
< \frac{\a}{D(x)} \le \frac{\a}{D} \le \frac{1}{4}.
$$
Since $\|x\|_2 = 1$, it follows that
\begin{equation}                    \label{x-pp}
  \Big\| x - \frac{p}{\|p\|_2} \Big\|_2
  < \frac{2\a}{D}.
\end{equation}

On the other hand, by \eqref{Dx x} and using that $\|x\|_2 = 1$,
$D(x) \le 2D$ and $4\a/D \le 1$, we obtain
\begin{equation}                    \label{net bounded}
  \|p\|_2 < D(x) + \a
  \le 2D + \a
  \le 3D.
\end{equation}

Inequalities \eqref{x-pp} and \eqref{net bounded} show that every point $x \in S_D$
is within Euclidean distance $2\a/D$ from the set
$$
\NN := \Big\{ \frac{p}{\|p\|_2} :\; p \in \Z^N \cap B(0,3D) \Big\}.
$$
A known volumetric argument gives a bound on the number of integer points in
$B(0,3D)$:
$$
|\NN| \le (1+9D/\sqrt{N})^N \le (C_0 D/\sqrt{N})^N
$$
(where in the last inequality we used that by Definition~\ref{def level sets} of the level sets, $D > c_0\sqrt{N}$).
Finally, there exists a $(4\a/D)$-net of $S_D$ with the same cardinality as $\NN$,
and which lies in $S_D$. Indeed, to obtain such a net, one selects one (arbitrary) point from the
intersection of $S_D$ with a ball of radius $2\a/D$ centered at each point from $\NN$.
This completes the proof.
\end{proof}

\begin{lemma}[Lower bound for a level set]          \label{for level set}
  There exist $c_1, c_2, \mu \in (0,1)$ such that the following holds.
  Let $\a = \mu \sqrt{N} \ge 1$ and $D \le c_1 \sqrt{N} e^{c_1 N / m}$.
  Then
  $$
  \P \big( \inf_{x \in S_D} \|Bx\|_2 < c_2 N/D \big) \le 2e^{-N}.
  $$
\end{lemma}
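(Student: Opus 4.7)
The plan is an $\e$-net argument combining Lemma~\ref{net}, the single-vector estimate of Lemma~\ref{for single vector}, and the operator norm bound of Proposition~\ref{norm}. For $x \in S_D$ and a nearest net point $y \in \NN$, we have $\|x-y\|_2 \le 4\a/D$, hence by the triangle inequality $\|Bx\|_2 \ge \|By\|_2 - \|B\|\cdot 4\a/D$. On the event $\EE_1 := \{\|B\| \le K\sqrt{N}\}$, which by Proposition~\ref{norm} has probability at least $1 - e^{-N}$ once $K$ is a sufficiently large constant depending on the subgaussian moment, the subtracted term is at most $4K\mu N/D$. Therefore, to force $\inf_{x \in S_D} \|Bx\|_2 \ge c_2 N/D$ it suffices to prove $\|By\|_2 \ge t\sqrt{N}$ simultaneously for all $y \in \NN$, where $t := (c_2 + 4K\mu)\sqrt{N}/D$.

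For this choice of $t$ and $\a = \mu\sqrt{N}$, Lemma~\ref{for single vector} gives
\[
\P\bigl(\|By\|_2 < t\sqrt{N}\bigr) \le \Bigl(Ct + \tfrac{C}{D} + C e^{-c\mu^2 N}\Bigr)^{N-m}.
\]
Since $D \ge c_0 \sqrt{N}$ by Lemma~\ref{LCD sqrt N}, the term $C/D$ is absorbed into $Ct = C(c_2+4K\mu)\sqrt{N}/D$; and taking $\mu$ small makes $C e^{-c\mu^2 N}$ negligible compared to $Ct$ for all $N$ large enough. A union bound over the net, whose cardinality is at most $(C_0 D/\sqrt{N})^N$ by Lemma~\ref{net}, then yields
\[
\P\Bigl(\inf_{y \in \NN} \|By\|_2 < t\sqrt{N}\Bigr) \le \Bigl(\frac{C_0 D}{\sqrt{N}}\Bigr)^N \Bigl(\frac{C'(c_2+4K\mu)\sqrt{N}}{D}\Bigr)^{N-m} = (C_0 C')^N (c_2 + 4K\mu)^{N-m}\Bigl(\frac{D}{\sqrt{N}}\Bigr)^m.
\]

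Substituting the hypothesis $D \le c_1 \sqrt{N}\, e^{c_1 N/m}$ replaces $(D/\sqrt{N})^m$ by $c_1^m e^{c_1 N}$, leaving the bound $(C_0 C')^N (c_2+4K\mu)^{N-m} c_1^m e^{c_1 N}$. Taking logarithms, this is at most $e^{-N}$ provided the absolute constants $c_1$, $c_2$, and $\mu$ are chosen small enough in terms of $C_0$, $C'$, $K$ (hence in terms of the subgaussian moment alone); explicitly it suffices that both $c_1$ and $c_2 + 4K\mu$ be at most $e^{-(2 + \log(C_0 C'))}$. Combined with $\P(\EE_1^c) \le e^{-N}$, this gives the claimed $2e^{-N}$. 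The main obstacle is the delicate balance between the net entropy $(D/\sqrt{N})^N$ and the single-vector decay $(\sqrt{N}/D)^{N-m}$: they collapse to $(D/\sqrt{N})^m$, and the hypothesis on $D$ is calibrated precisely so that this is at most $e^{c_1 N}$, then overcome by the constant factors $(c_2+4K\mu)^{N-m} c_1^m$. The three small parameters $c_1$, $c_2$, $\mu$ must be tuned jointly --- one cannot shrink $c_2$ alone without also controlling $\mu$ (which enters through the norm error) and $c_1$ (which caps the permissible exponential range of $D$).
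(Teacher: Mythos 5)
Your overall skeleton is exactly the paper's: bound $\|B\|$ by $K\sqrt N$ off an event of probability $e^{-N}$, apply Lemma~\ref{for single vector} to each point of the special net from Lemma~\ref{net}, take a union bound, and transfer to all of $S_D$ by the triangle inequality, with the entropy $(D/\sqrt N)^N$ and the decay $(\sqrt N/D)^{N-m}$ collapsing to $(D/\sqrt N)^m \le c_1^m e^{c_1 N}$. The arithmetic of that part is fine. The genuine gap is in how you dispose of the third term $Ce^{-c\a^2}=Ce^{-c\mu^2 N}$ in \eqref{eq for single vector}. You claim that ``taking $\mu$ small makes $Ce^{-c\mu^2 N}$ negligible compared to $Ct$ for all $N$ large enough.'' This is backwards: shrinking $\mu$ makes $e^{-c\mu^2 N}$ \emph{larger}, and, more importantly, whether $e^{-c\mu^2 N}\le t$ holds at all depends on a coupling between $c_1$ and $\mu$ that your final parameter prescription (``$c_1$ and $c_2+4K\mu$ at most an absolute constant'') does not impose. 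Indeed, with $t=(c_2+4K\mu)\sqrt N/D$ and $D$ near its permitted maximum $c_1\sqrt N e^{c_1 N/m}$ with, say, $m=1$, one has $t\sim e^{-c_1 N}$, which is exponentially \emph{smaller} than $e^{-c\mu^2 N}$ whenever $c_1>c\mu^2$; then the single-vector bound is governed by $(Ce^{-c\mu^2 N})^{N-m}$, and against the net cardinality $(C_0D/\sqrt N)^N\sim e^{c_1N^2/m}$ the union bound produces a factor of order $e^{(c_1/m-c\mu^2/2)N^2}$, which blows up rather than being at most $e^{-N}$. So the step as justified can fail for matrices close to square, which is precisely the regime the lemma is needed for.

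The fix is the paper's: the admissible range of $D$ must be tied to $\a$. The paper sets $c_1=c\mu^2$ (with $c$ the constant from Lemma~\ref{for single vector}), which guarantees $t\ge e^{-c\a^2}$ for every $D\le c_1\sqrt N e^{c_1N/m}$ and every $m\ge 1$, so that $Ct$ genuinely dominates all three terms; after that the computation you wrote goes through verbatim. (Two smaller points: absorbing $C/D$ into $Ct$ requires $(c_2+4K\mu)\sqrt N\ge 1$, i.e.\ $N$ larger than a constant depending on the parameters --- the paper secures this by taking $\tilde c$ small so that $N>\nu^{-2}$ --- and not the inequality $D\ge c_0\sqrt N$ you cite; and one should note that the net of Lemma~\ref{net} lies in $S_D$, which is what licenses applying Lemma~\ref{for single vector} to its points.)
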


\begin{proof}
By Lemma~\ref{norm}, there exists $K \ge 1$ that depends only on the subgaussian moment
and such that
$$
\P (\|B\| > K \sqrt{N}) \le e^{-N}.
$$
Therefore, in order to complete the proof, it is enough to find
$\nu > 0$ which depends only on the subgaussian moment, and such that the event
$$
\EE :=  \Big\{ \inf_{x \in S_D} \|Bx\|_2 < \frac{\nu N}{2D}
  \text{ and } \|B\| \le K\sqrt{N} \Big\}
$$
has probability at most $e^{-N}$.

We claim that this holds with the following choice of parameters:
$$
\nu = \frac{1}{(3 C C_0)^2 e}, \quad
\mu = \frac{\nu}{9K}, \quad
c_1 = c \mu^2 \le \nu,
$$
where $C \ge 1$ and $c \in (0,1)$ are the constants from Lemma~\ref{for single vector}
and $C_0 \ge 1$ is the constant from Lemma~\ref{net}.

By choosing $\tilde{c}$ in the statement of Theorem~\ref{LCD random} suitably small, we
can assume that $N > \nu^{-2}$ (this is because by the assumptions, $N > m/\tilde{c} \ge 1/\tilde{c}$).
We apply Lemma~\ref{for single vector}
with $t = \nu \sqrt{N}/D$. Then recalling the choice of $\a$ and $c_1$ and our assumption on $D$,
one easily checks that the term $Ct$ dominates in the right hand side of \eqref{eq for single vector}:
$$
t \ge 1/D \quad \text{and} \quad t \ge e^{-c \a^2}.
$$
This gives for arbitrary $x_0 \in S_D$:
\[
  \P \Big( \|Bx_0\|_2 < \frac{ \nu N}{D}  \Big)
  \le \Big( \frac{3C\nu \sqrt{N}}{D} \Big)^{N-m}.
\]
Now we use Lemma~\ref{net}, which yields a small $(4\a/D)$-net $\NN$
of $S_D$. Taking the union bound, we get
\[
  p := \P \Big( \inf_{x_0 \in \NN} \|Bx_0\|_2 < \frac{ \nu N}{D} \Big)
  \le \Big( \frac{C_0 D}{\sqrt{N}} \Big)^N
  \Big( \frac{3C\nu \sqrt{N}}{D} \Big)^{N-m}.
\]
Denote $C_1 := 3 C C_0$. Using the fact that $c_1 \le \nu$ and our assumption on $D$, we have:
\begin{equation}                            \label{for the net}
  p
  \le C_1^N \Big( \frac{D}{\sqrt{N}} \Big)^m \nu^{N-m}
  \le C_1^N (\nu e^{\nu N / m})^m \nu^{N-m}
  \le C_1^{2N} \nu^N
  = e^{-N}.
\end{equation}

Assume $\EE$ occurs. Fix $x \in S_D$ for which $\|Bx\|_2 < \frac{\nu
N}{2D}$; it can be approximated by some element $x_0 \in \NN$ as
$$
\|x - x_0\|_2 \le \frac{4\mu \sqrt{N}}{D}.
$$
Therefore, by the triangle inequality we have
$$
  \|Bx_0\|_2 \le \|Bx\|_2 + \|B\| \cdot \|x-x_0\|_2
  \le \frac{\nu N}{2D} + K \sqrt{N} \cdot \frac{4\mu \sqrt{N}}{D}
  <  \frac{\nu N}{D},
$$
where in the last inequality we used our choice of $\mu$.

We have shown that the event $\EE$ implies the event that
$$
\inf_{x_0 \in \NN} \|Bx_0\|_2 < \frac{\nu N}{D},
$$
whose probability is at most $e^{-N}$ by \eqref{for the net}.
The proof is complete.
\end{proof}

\bigskip

\begin{proof}[Proof of Theorem~\ref{LCD random}.]
Consider $x \in S^{N-1}$ such that
$$
\LCD_{\a,c}(x) < c_1 \sqrt{N} e^{c_1 N / m},
$$
where $c_1$ is the constant from Lemma~\ref{for level set}.
Then, by the Definition~\ref{def level sets} of the level sets, either $x$ is compressible
or $x \in S_D$ for some $D \in \DD$, where
$$
\DD := \{ D :\; c_0 \sqrt{N} \le D < c_1 \sqrt{N} e^{c_1 N / m}, \; D = 2^k, \; k \in \N\}.
$$
Therefore, recalling the definition of the least common denominator of the subspace
$$
\LCD_{\a,c}(H^\perp) = \inf_{x \in S(H^\perp)} \LCD_{\a,c}(x),
$$
we can decompose the desired probability as follows:
\begin{align*}
p 
  &:= \P \big( \LCD_{\a,c}(H^\perp)  < c_1 \sqrt{N} e^{c_1 N / m} \big) \\
  &\le \P(H^\perp \cap \Comp \ne \emptyset) 
    + \sum_{D \in \DD}
      \P(H^\perp \cap S_D \ne \emptyset).
\end{align*}

By Lemma~\ref{random incompressible}, the first term in the right hand side
is bounded by $e^{-cN}$.
Further terms can be bonded using \eqref{navigation} and Lemma~\ref{for level set}:
$$
\P(H^\perp \cap S_D \ne \emptyset)
\le \P \big( \inf_{x \in S_D} \|Bx\|_2 = 0 \big)
\le 2e^{-N}.
$$
Since there are $|\DD| \le C'N$ terms in the sum, we conclude that
$$
p \le e^{-cN} + C'N e^{-N} \le e^{-c'N}.
$$
This completes the proof.
\end{proof}

\section{Decomposition of the sphere}                   \label{s: decomposition}
%-----------------------------------------------------

Now we begin the proof of Theorem~\ref{t: subgaussian}.
We will make several useful reductions first.

Without loss of generality, we can assume that the entries of $A$ have a
an absolutely continuous distribution. Indeed, we can add to each entry
an independent Gaussian random variable with small variance $\s$, and
later let $\s \to 0$.

Similarly,  we can assume that $n \ge n_0$, where $n_0$ is a
suitably large number that depends only on the subgaussian moment
$B$.

We let
$$
N = n  - 1+ d
$$
for some $d \ge 1$.
We can assume that
\begin{equation}                        \label{nd}
  1 \le d \le c_0 n,
\end{equation}
with suitably small constant $c_0 > 0$ that depends only on the
subgaussian moment $B$. Indeed, as we remarked in the Introduction,
for the values of $d$ above a constant proportion of $n$,
Theorem~\ref{t: subgaussian} follows from \eqref{constant
proportion}. Note that
$$
\sqrt{N} - \sqrt{n-1} \le \frac{d}{\sqrt{n}}.
$$

Using the decomposition of the sphere $S^{n-1} = \Comp \cup \Incomp$,
we break the invertibility problem into two subproblems,
for compressible and incompressible vectors:
\begin{multline}                        \label{two terms}
  \P \Big( s_n(A) \le \e \big (\sqrt{N} - \sqrt{n-1} \, \big) \Big)
  \le \P \big( s_n(A) \le \e \frac{d}{\sqrt{n}} \big) \\
  \le \P \big( \inf_{x \in \Comp(\d,\rho)} \|A x\|_2 \le \e \frac{d}{\sqrt{n}} \big)
    + \P \big( \inf_{x \in \Incomp(\d,\rho)} \|A x\|_2 \le \e \frac{d}{\sqrt{n}} \big).
\end{multline}

A bound for the compressible vectors follows from Lemma~\ref{l:
compressible}. Using \eqref{nd} we get
$$
\e \frac{d}{\sqrt{n}} \le c_0 \sqrt{n} \le c_0 \sqrt{N}.
$$
Hence, Lemma~\ref{l: compressible} implies
\begin{equation}\label{eq: compressible}
  \P \Big( \inf_{x \in \Comp(\d,\rho)} \|A x\|_2 \le\e \frac{d}{\sqrt{n}} \Big)
  \le e^{-c_3 N}.
\end{equation}

It remains to find a lower bound on $\|Ax\|$ for the incompressible vectors $x$.

\section{Invertibility via uniform distance bounds}        \label{s: via dist}
%-------------------------------------------------------

In this section, we reduce the problem of bounding $\|Ax\|_2$ for incompressible
vectors $x$ to the distance problem that we addressed in Section~\ref{s: distance}.

Let $X_1,\ldots,X_n \in \R^N$ denote the columns of the matrix $A$.
Given a subset $J \subseteq \{1,\ldots,n\}$ of cardinality $d$,
we consider the subspace
$$
H_J := \Span(X_k)_{k \in J} \subset \R^N.
$$

For levels $K_1, K_2 > 0$ that will only depend on $\d,\rho$, we
define the set of totally spread vectors
\begin{equation}                        \label{SJ}
  \Spread_J:= \Big\{ y \in S(\R^J) : \;
    \frac{K_1}{\sqrt{d}} \le |y_k| \le \frac{K_2}{\sqrt{d}}
    \quad \text{for all $k \in J$} \Big\}.
\end{equation}

In the following lemma, we let $J$ be a random subset uniformly distributed
over all subsets of $\{1,\ldots,n\}$ of cardinality $d$. To avoid
confusion, we often denote the probability and expectation over
the random set $J$ by $\P_J$ and $\E_J$, and with respect to the
random matrix $A$ by $\P_A$ and $\E_A$.

\begin{lemma}[Total spread]                     \label{total spread}
  For every $\d,\rho \in (0,1)$,
  there exist $K_1, K_2, c_0 > 0$ which depend only on $\d,\rho$,
  and such that the following holds.
  For every $x \in \Incomp(\d,\rho)$, the event
  $$
  \EE(x) : = \Big\{ \frac{P_J x}{\|P_J x\|_2} \in \Spread_J
  \quad \text{and} \quad
  \frac{\rho \sqrt{d}}{\sqrt{2n}} \le \norm{P_Jx}_2
  \le \frac{\sqrt{d}}{\sqrt{\d n}}
  \Big\}
  $$
  satisfies $\P_J(\EE(x)) > c_0^d$.
\end{lemma}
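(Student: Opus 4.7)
The plan is to exploit Lemma~\ref{l: spread}, which guarantees that every incompressible $x$ has a large set of coordinates of the ``typical'' size $\sim 1/\sqrt{n}$; once $J$ is forced to lie inside that good set, both desired estimates follow immediately.

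More precisely, given $x \in \Incomp(\d,\rho)$, let $\s = \s(x)$ be the set of cardinality $|\s| \ge \frac{1}{2}\rho^2 \d n$ produced by Lemma~\ref{l: spread}, on which
$$
  \frac{\rho}{\sqrt{2n}} \le |x_k| \le \frac{1}{\sqrt{\d n}}.
$$
Consider the event $\EE_0 := \{J \subseteq \s\}$. Because we may assume (via the choice of $c_0$ in \eqref{nd}) that $d \le \frac{1}{4}\rho^2 \d n \le |\s|/2$, a standard counting estimate gives
$$
  \P_J(\EE_0) = \frac{\binom{|\s|}{d}}{\binom{n}{d}}
  \ge \Big(\frac{|\s|-d}{n}\Big)^d \ge \Big(\frac{\rho^2 \d}{4}\Big)^d.
$$
This supplies the lower bound $c_0^d$ on the probability with $c_0 = \rho^2\d/4$ depending only on $\d,\rho$.

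On $\EE_0$, every coordinate $k \in J$ inherits the spread bounds from $\s$, so
$$
  \frac{\rho^2 d}{2 n} \le \norm{P_J x}_2^2 = \sum_{k \in J} x_k^2 \le \frac{d}{\d n},
$$
which is exactly the norm estimate $\frac{\rho\sqrt{d}}{\sqrt{2n}} \le \norm{P_J x}_2 \le \frac{\sqrt{d}}{\sqrt{\d n}}$ required in the definition of $\EE(x)$. Dividing the per-coordinate bound on $|x_k|$ by this norm bound then yields, for all $k \in J$,
$$
  \frac{\rho\sqrt{\d}}{\sqrt{2}} \cdot \frac{1}{\sqrt{d}}
  \le \frac{|x_k|}{\norm{P_J x}_2}
  \le \frac{\sqrt{2}}{\rho\sqrt{\d}} \cdot \frac{1}{\sqrt{d}},
$$
so setting $K_1 := \rho\sqrt{\d}/\sqrt{2}$ and $K_2 := \sqrt{2}/(\rho\sqrt{\d})$ places $P_J x/\norm{P_J x}_2$ in $\Spread_J$.

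There is no serious obstacle here: the mild technicality is simply keeping the constraint $d \le |\s|/2$ satisfied, which is handled once and for all by choosing the constant $c_0$ in \eqref{nd} small enough in terms of $\d,\rho$. The key conceptual point is that the inclusion $J \subseteq \s$ simultaneously enforces the two-sided size control on each coordinate of $P_J x$ and the two-sided control on $\norm{P_J x}_2$, which together are exactly what the definition of $\Spread_J$ demands after normalisation.
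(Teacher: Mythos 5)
Your proof is correct and follows essentially the same route as the paper: condition on the event $J \subseteq \s(x)$ with $\s(x)$ from Lemma~\ref{l: spread}, bound its probability by the ratio of binomial coefficients, and read off both the norm bounds and membership in $\Spread_J$ from the coordinate-wise spread estimates. The only cosmetic difference is that you bound $\binom{|\s|}{d}/\binom{n}{d}$ by the elementary term-by-term estimate $\bigl((|\s|-d)/n\bigr)^d$ (constant $\rho^2\d/4$) where the paper invokes Stirling's approximation (constant $\rho^2\d/2e$), which is immaterial since $c_0$ need only depend on $\d,\rho$.
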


\begin{remark}
  The proof gives $K_1 = \rho \sqrt{\d/2}$, $K_2 = 1/K_1$, $c_0 = \rho^2 \d / 2e$.
  In the rest of the proof, we shall use definition \eqref{SJ} of
  $\Spread_J$ with these values of the levels $K_1$, $K_2$.
\end{remark}

\begin{proof}
Let $\s \subset \{1 \etc n\}$ be the subset from Lemma \ref{l:
spread}. Recall that the parameters $\d$ and $\rho$ depend only on
the subgaussian moment $B$ (see Lemma \ref{l: compressible}). By
choosing the constant $c_0$ in \eqref{nd} appropriately small, we
may assume that $d \le |\s|/2$.
 Then, using Stirling's approximation we have
\[
  \P_J (J \subset \s) = \binom{|\s|}{d} \Big / \binom{n}{d}
  > \Big( \frac{\rho^2 \d}{2 e} \Big)^d
  = c_0^d.
\]
If $J \subset \s$, then summing \eqref{eq: spread} over $k \in J$, we obtain
the required two-sided bound for $\|P_J x\|_2$.
This and \eqref{eq: spread} yields $\frac{P_J x}{\norm{P_J x}_2} \in \Spread_J$.
Hence $\EE(x)$ holds.
\end{proof}

\begin{lemma}[Invertibility via distance]                     \label{l: via dist}
  Let $\d,\rho \in (0,1)$.
  There exist $C_1, c_1 > 0$ which depend only on $\d,\rho$,
  and such that the following holds.
  Let $J$ be any $d$-element subset of $\{1 \etc n\}$.
  Then for every $\e > 0$
  \begin{equation}                                  \label{eq via dist}
    \P \Big( \inf_{x \in \Incomp(\d,\rho)} \|Ax\|_2 < c_1 \e \sqrt{\frac{d}{n}} \Big)
    \le C_1^d \cdot  \P \big( \inf_{z \in \Spread_J} \dist(Az, H_{J^c}) < \e \big).
  \end{equation}
\end{lemma}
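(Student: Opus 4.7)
The plan is to reduce the invertibility problem to a distance problem by using Lemma~\ref{total spread} to carve out a ``spread'' restriction $z$ of an incompressible minimizer. Fix an incompressible $x \in \Incomp(\d,\rho)$ and a subset $J \subseteq \{1,\ldots,n\}$ of size $d$. Decompose $x = P_J x + P_{J^c} x$. Then
$$
Ax \;=\; A P_J x \;+\; \sum_{k \in J^c} x_k X_k,
$$
and the second term lies in $H_{J^c}$. Consequently,
$$
\dist(A P_J x,\, H_{J^c}) \;=\; \dist(Ax,\, H_{J^c}) \;\le\; \|Ax\|_2.
$$
When $P_J x \ne 0$, normalizing gives $z := P_J x/\|P_J x\|_2 \in S(\R^J)$ and
$$
\dist(Az,\, H_{J^c}) \;\le\; \frac{\|Ax\|_2}{\|P_J x\|_2}.
$$

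Next I would randomize over $J$ (uniformly among $d$-subsets) in order to activate Lemma~\ref{total spread}. On the event $\EE(x)$ given by that lemma, which has $\P_J$-probability at least $c_0^d$, the vector $z$ lies in $\Spread_J$ and $\|P_J x\|_2 \ge \rho\sqrt{d}/\sqrt{2n}$. Combining with the bound above,
$$
\dist(Az,\, H_{J^c}) \;\le\; \frac{\sqrt{2n}}{\rho\sqrt{d}}\,\|Ax\|_2.
$$
Hence, if I set $c_1 := \rho/\sqrt{2}$ and assume $\|Ax\|_2 < c_1 \e \sqrt{d/n}$, then $\dist(Az, H_{J^c}) < \e$, so the event $\{\inf_{z \in \Spread_J}\dist(Az,H_{J^c}) < \e\}$ occurs.

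The remaining step is a Fubini argument connecting random $J$ back to an arbitrary fixed $J$. Write $\mathcal{B}(A) := \{\,\inf_{x \in \Incomp}\|Ax\|_2 < c_1 \e \sqrt{d/n}\,\}$, and, by a measurable selection (the distribution of $A$ is absolutely continuous after the standard perturbation argument, so a measurable near-minimizer $x^\star(A) \in \Incomp$ exists), fix $x^\star(A)$ realizing $\mathcal{B}$ up to arbitrary precision. For any realization of $A \in \mathcal{B}$, the analysis above gives
$$
\P_J\bigl(\,\inf_{z \in \Spread_J}\dist(Az, H_{J^c}) < \e \;\big|\; A\bigr) \;\ge\; \P_J\bigl(\EE(x^\star(A))\bigr) \;\ge\; c_0^d.
$$
Integrating over $A$ and swapping the order of integration,
$$
c_0^d\,\P_A(\mathcal{B}) \;\le\; \E_J\,\P_A\bigl(\,\inf_{z \in \Spread_J}\dist(Az, H_{J^c}) < \e\bigr).
$$
Since the entries of $A$ are i.i.d., the law of $A$ is invariant under column permutations, so the $\P_A$-probability on the right is the same for every $d$-subset $J$; thus this expectation equals $\P_A(\inf_z \dist(Az, H_{J^c}) < \e)$ for the originally fixed $J$. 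Setting $C_1 := 1/c_0$ yields \eqref{eq via dist}.

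The main obstacle is the clean bookkeeping in this last paragraph: ensuring a measurable choice of $x^\star(A)$ (easy once we pass to the absolutely continuous regularization already adopted in Section~\ref{s: decomposition}, plus a standard approximation of the infimum) and correctly invoking permutation symmetry so that a random $J$ in the Fubini step can be replaced by the deterministic $J$ on the right-hand side of \eqref{eq via dist}. The geometric core of the argument, by contrast, is just the two-line reduction $\dist(Az,H_{J^c}) \le \|Ax\|_2/\|P_Jx\|_2$ together with Lemma~\ref{total spread}.
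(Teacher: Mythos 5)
Your proposal is correct and takes essentially the same route as the paper: the same pointwise reduction $\dist(Az,H_{J^c}) \le \|Ax\|_2/\|P_Jx\|_2$ on the event $\EE(x)$ of Lemma~\ref{total spread}, followed by the same Fubini argument over the random $J$ and the same permutation-invariance step to replace the average over $J$ by the fixed $J$, yielding $C_1 = 1/c_0$. The only cosmetic difference is that you fix a witness $x^\star(A)$ on the bad event, while the paper conditions on the event $\{\P_J(D(A,J)\ge\e) > 1-c_0^d\}$ and intersects it with $\EE(x)$ for every $x$ simultaneously; your measurable-selection worry is actually unnecessary, since the quantity being integrated, $\P_J\bigl(\inf_{z\in\Spread_J}\dist(Az,H_{J^c})<\e\bigr)$, is measurable in $A$ regardless, and only the existence of a witness for each fixed $A$ in the bad event is needed.
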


\begin{remark}
  The proof gives $K_1 = \rho \sqrt{\d/2}$, $K_2 = 1/K_1$,
  $c_1 = \rho/\sqrt{2}$, $C_1 = 2e / \rho^2 \d$.
\end{remark}

\begin{proof}
Let $x \in \Incomp(\d, \rho)$.
 For every subset $J$ of $\{1,\ldots,n\}$ we have
$$
\|Ax\|_2 \ge \dist(Ax, H_{J^c})
= \dist(A P_J x, H_{J^c}).
$$
In case the event $\EE(x)$ of Lemma~\ref{total spread} holds,
we use the vector $z= \frac{P_J x}{\norm{P_J x}_2 } \in \Spread_J$ to check that
$$
\|Ax\|_2 \ge  \|P_J x\|_2 \, D(A,J),
$$
where the random variable
$$
D(A,J) = \inf_{z \in \Spread_J} \dist(Az, H_{J^c})
$$
is independent of $x$.
Moreover, using the estimate on $\|P_J x\|_2$ in the definition of the event $\EE(x)$,
we conclude that
\begin{equation}                        \label{EEJ implies}
  \EE(x) \quad \text{implies} \quad \|Ax\|_2 \ge c_1 \sqrt{\frac{d}{n}}  \, D(A,J).
\end{equation}

Define the event
\[
  \mathcal{F} := \big\{ A : \; \P_J(D(A,J) \ge \e)
  > 1 - c_0^d \big\},
\]
where $c_0$ is the constant from Lemma~\ref{total spread}.
Chebychev inequality and Fubini theorem then yield
$$
\P_A(\mathcal{F}^c) \le c_0^{-d} \E_A \P_J (D(A,J) < \e) = c_0^{-d}
\E_J \P_A (D(A,J) < \e).
$$
Since the entries of $A$ are independent and identically
distributed, the probability $\P_A (D(A,J) < \e)$ does not depend on $J$. Therefore,
the right hand side of the previous inequality coincides with the
right hand side of \eqref{eq via dist}.

Fix any realization of $A$ for which $\mathcal{F}$ occurs, and fix
any $x \in \Incomp(\d, \rho)$. Then
\[
  \P_J (D(A,J) \ge \e) +\P_J(\EE(x))
  > (1-c_0^d) + c_0^d = 1,
\]
 so we conclude that
\begin{equation}                                \label{two ineq for J}
  \P_J \big( \EE(x)
    \text{ and } D(A,J) \ge \e \big) > 0.
\end{equation}
We have proved that for every $x \in \Incomp(\d,\rho)$ there exists
a subset $J = J(x)$ that satisfies both $\EE(x)$ and $D(A,J) \ge
\e$. Using this $J$ in \eqref{EEJ implies}, we conclude that every
matrix $A$ for which the event $\mathcal{F}$ occurs satisfies
$$
\inf_{x \in \Incomp(\d,\rho)} \|Ax\|_2 \ge \e c_1 \sqrt{\frac{d}{n}}.
$$
This and the estimate of $\P_A(\mathcal{F}^c)$ completes the proof.
\end{proof}

\section{The uniform distance bound}                            \label{s: uniform distance}
%------------------------------------------------------

In this section, we shall estimate the distance between a random ellipsoid
and a random independent subspace. This is the distance that we need to bound
in the right hand side of \eqref{eq via dist}.

Throughout this section, we let $J$ be a fixed subset of $\{1,\ldots,n\}$,
$|J| = d$. We shall use the notation introduced in the beginning of
Section~\ref{s: via dist}. Thus, $H_J$ denotes a random subspace, and $\Spread_J$
denotes the totally spread set whose levels $K_1$, $K_2$ depend only on
$\d$, $\rho$ in the definition of incompressibility.

We will denote by $K, K_0, C, c, C_1, c_1, \ldots$ positive numbers
that depend only on $\d$, $\rho$ and the subgaussian moment $B$.

\begin{theorem}[Uniform distance bound]             \label{uniform distance}
  For every $t > 0$,
  $$
  \P \Big( \inf_{z \in \Spread_J} \dist (Az, H_{J^c}) < t \sqrt{d} \Big)
  \le (Ct)^d + e^{-cN}.
  $$
\end{theorem}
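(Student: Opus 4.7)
Write $W := H_{J^c}^\perp$ so that $\dist(Az, H_{J^c}) = \|P_W A_J z\|_2$; in general position $\dim W = N - (n-d) = 2d-1$, and the columns $(X_k)_{k \in J^c}$ that determine $W$ are independent of $A_J$. The plan is to condition on $W$ together with the favorable event
\[
 \EE := \{\|A\| \le K\sqrt{N}\} \cap \{\LCD_{c_0\sqrt{N},\,c_0}(W) \ge c_0\sqrt{N}\, e^{c_0 N/(2d-1)}\},
\]
which, by Proposition~\ref{norm} and Theorem~\ref{LCD random} (applicable because $2d-1 < \tilde{c}N$ in the regime \eqref{nd}), has $\P(\EE^c) \le e^{-cN}$.

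For each fixed $z \in S^{d-1}$, the vector $A_J z = \sum_{k \in J} z_k X_k$ has i.i.d.\ centered unit-variance subgaussian coordinates, independent of $W$. Applying Theorem~\ref{distance general} with $\alpha = c_0 \sqrt{N}$, $\gamma = c_0$, and $\e$ chosen so that $\e\sqrt{2d-1} = t\sqrt{d}$, I get the pointwise estimate
\[
 \P \bigl( \|P_W A_J z\|_2 < t \sqrt{d} \,\bigm|\, \EE \bigr) \le (C_1 t)^{2d-1} + C_1^{2d-1} e^{-c_1 N}.
\]
The exponent $2d-1$ already contains a surplus of $d$ over the target $(Ct)^d$; this surplus must absorb the cost of passing from a single $z$ to the whole set $\Spread_J$.

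The main obstacle is that a naive $\e$-net argument fails: a $\mu$-net of $\Spread_J \subseteq S^{d-1}$ has cardinality $(C/\mu)^d$, and the Lipschitz constant $\|P_W A_J\|_{\mathrm{op}}$ needed to push the pointwise bound through it is of order $\sqrt{d}$ only with probability $1 - e^{-cd}$, which is much weaker than the required $1 - e^{-cN}$ for tall matrices. To bypass this I would use the decoupling flagged in the introduction. Split $J = J_1 \sqcup J_2$ with $|J_1| = |J_2| = \lfloor d/2 \rfloor$ and enlarge $H_{J^c}$ to $H' := \Span(X_k)_{k \in J_2 \cup J^c}$; since $A_{J_2} z_2 \in H'$,
\[
 \dist(Az, H_{J^c}) \ge \dist(A_{J_1} z_1, H').
\]
The spread property forces $\|z_1\|_2 \asymp 1$, so it now suffices to lower-bound $\dist(A_{J_1} v, H')$ uniformly over a subset of the sphere in $\R^{J_1}$. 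The gain is twofold: the net is now only $(C/\mu)^{d/2}$, while Theorem~\ref{distance general} applied to $H'$ (whose perpendicular also has large LCD by Theorem~\ref{LCD random}) still provides an exponent $m' = d + |J_1| - 1 \approx 3d/2$. The product $(C/\mu)^{d/2}[(C_1 t)^{3d/2} + \cdots]$ collapses to $(C't)^d$ at $\mu = \Theta(t)$, and the auxiliary bound $\|P_{H'^\perp} A_{J_1}\|_{\mathrm{op}} \lesssim \sqrt{d}$ now only needs to hold with probability $1 - (C't)^d - e^{-cN}$, which is achievable because its failure can be swept into the final error. The hard part is engineering this split so that (i) the reduced Lipschitz scale $\sqrt{d}$ matches the target $t\sqrt{d}$, (ii) the shrunken net cardinality $(C/\mu)^{d/2}$ is exactly paid for by the exponent surplus $m' - |J_1| \approx d$, and (iii) the $e^{-cN}$ exception probability is preserved rather than degrading to $e^{-cd}$. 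Putting everything together and de-conditioning over $\EE$ yields $\P \le (Ct)^d + e^{-cN}$.
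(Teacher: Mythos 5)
Your setup (the pointwise estimate via Theorem~\ref{distance general} with codimension $2d-1$, the identification of the net/Lipschitz obstruction, and the wish to decouple) matches the paper's strategy, but the step you propose to overcome the obstruction does not actually overcome it. After you replace $\dist(Az,H_{J^c})$ by $\dist(A_{J_1}z_1,H')$, the passage from a net of the sphere in $\R^{J_1}$ to the whole set still requires a Lipschitz bound of the form $\|P_{H'^\perp}A_{J_1}\|\lesssim\sqrt{d}$. Conditioned on $H'$, this is an operator norm of a subgaussian matrix with about $d/2$ independent columns projected onto a space of dimension about $3d/2$, so for a fixed constant threshold its failure probability is of order $e^{-cd}$ --- exactly the same order as before, and \emph{not} dominated by $(Ct)^d+e^{-cN}$ (take $d$ small and $t$ small but above $t_0=e^{-\bar cN/d}$; then $(Ct)^d\ll e^{-cd}$ while $e^{-cN}\ll e^{-cd}$ is false only when $d\sim N$, which is excluded here). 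So the claim that the norm event ``can be swept into the final error'' is the gap. Moreover, your enlargement of the subspace buys nothing on the critical bookkeeping: the exponent surplus (pointwise exponent minus net dimension) is $\approx d$ both in the naive argument ($2d-1$ versus $d-1$) and in yours ($3d/2-1$ versus $d/2-1$), so the only real issue --- the probability of the norm event --- is untouched. One could try to salvage your route by letting the norm threshold grow like $\sqrt{\log(1/t)}$, but then the net scale shrinks and logarithmic losses appear that you have not controlled; as written the argument does not close.

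The paper's fix is genuinely different. It does not ask for $\|W\|\lesssim\sqrt d$ with overwhelming probability at all. Instead it decomposes according to dyadic levels $sK_0\sqrt d<\|W\|\le 2sK_0\sqrt d$ (the top level $\|W\|>C_0\sqrt N$ being absorbed into $\P(\|A\|>C_0\sqrt N)\le e^{-cN}$), and for each level uses a decoupling inequality (Proposition~\ref{decoupling}): splitting the columns of $W$ indexed by $J$ into two halves and using that a totally spread $z$ has $\|z_I\|_2\gtrsim 1$ on each half, the event ``$\|Wz\|_2$ small'' can be witnessed on one half while ``$\|W\|$ large'' is witnessed on the other, and by independence the two probabilities multiply. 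The factor $e^{-cs^2d}$ from the norm tail then beats the net cardinality growth $(Cs/t)^{d-1}$, and summing over dyadic $s$ yields $(Ct)^d+e^{-cN}$. If you want to keep your enlarged-subspace reduction, you would still need to import this multiplicative (level-by-level, product-of-probabilities) treatment of the norm event; an additive treatment cannot give an error better than $e^{-cd}$. A further, smaller point: your conditioning event $\EE$ includes $\{\|A\|\le K\sqrt N\}$, which involves the columns in $J$ as well, so conditioning on $\EE$ destroys the independence of $A_Jz$ from the data defining $H'$; the paper conditions only on the columns in $J^c$ and adds the norm-event probability separately at the end.
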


Recall that $H_{J^c}$ is the span of $n-d$ independent random
vectors. Since their distribution is absolutely continuous (see the
beginning of Section~\ref{s: decomposition}), these vectors are
almost surely in general position, so
\begin{equation}                                            \label{dimH}
  \dim(H_{J^c}) = n-d.
\end{equation}

Without loss of generality, in the proof of Theorem~\ref{uniform distance}
we can assume that
\begin{equation}                                            \label{tzero}
  t \ge t_0 = e^{-\bar{c}N/d}
\end{equation}
with a suitably small $\bar{c} > 0$.

\subsection{First approach: nets and union bound}               \label{s: first}
%...................................................

We would like to prove Theorem~\ref{uniform distance} by a typical $\e$-net argument.
Theorem~\ref{distance} will give a useful probability bound for an individual $z \in S^{n-1}$.
We might then take a union bound over all $z$ in an $\e$-net of $\Spread_J$ and complete by
approximation. However, the standard approximation argument will leave us with a larger
error $e^{-cd}$ on the probability, which is unsatisfactory for small $d$.
To improve upon this step, we shall improve upon this approach using decoupling
in Section~\ref{s: second}.

For now, we start with a bound for an individual $z \in S^{n-1}$.

\begin{lemma}                       \label{nonuniform distance}
  Let $z \in S^{n-1}$ and $v \in \R^N$.
  Then for every $t$ that satisfies \eqref{tzero} we have
  $$
  \P \Big( \dist (Az, H_{J^c} + v) < t \sqrt{d} \Big)
  \le (C_1 t)^{2d-1}.
  $$
\end{lemma}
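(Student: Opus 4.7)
The key observation is geometric: under the absolute-continuity hypothesis introduced in Section~\ref{s: decomposition}, $H_{J^c}$ is a.s.\ a subspace of dimension $n - d$, so its orthogonal complement $E := H_{J^c}^\perp$ has dimension $N - (n - d) = 2d - 1$---exactly the exponent in the claimed bound. Since $P_E X_k = 0$ for all $k \in J^c$, the distance simplifies to
$$
\dist(Az, H_{J^c} + v) = \|P_E(Az - v)\|_2 = \Bigl\|\sum_{k \in J} z_k P_E X_k - P_E v\Bigr\|_2.
$$
Writing $W := \sum_{k \in J} z_k X_k = s \tilde X$ with $s := \|P_J z\|_2$, the rescaled vector $\tilde X \in \R^N$ has i.i.d.\ mean-zero unit-variance subgaussian coordinates, and is independent of $H_{J^c}$. (The trivial case $s = 0$ forces $z \in \R^{J^c}$ and is not relevant for the spread-vector application in Section~\ref{s: uniform distance}.)

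My plan is then to apply the Distance Theorem~\ref{distance} in $\R^N$ to $\tilde X$ and the random subspace $H_{J^c}$, with codimension $m := 2d - 1$. The hypothesis $m < \tilde c N$ of the theorem holds thanks to the standing assumption \eqref{nd} with $c_0$ small enough, and $H_{J^c}$ is indeed spanned by $n - d = N - m$ independent copies of a canonical subgaussian column vector, as required. Choosing $\e \sqrt{m} = (t \sqrt{d})/s$, so that $\e = t \sqrt{d / (2d - 1)}/s \le t/s$, Theorem~\ref{distance} yields
$$
\P\bigl(\dist(Az, H_{J^c} + v) < t \sqrt{d}\bigr) \le (C t / s)^{2d - 1} + e^{-c N}.
$$

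The last step is to absorb the additive $e^{-c N}$ term into the polynomial $(C_1 t)^{2d - 1}$. The standing hypothesis $t \ge t_0 = e^{-\bar c N / d}$ is designed precisely for this: it gives $t^{2d - 1} \ge e^{-(2d - 1) \bar c N / d} \ge e^{-2 \bar c N}$, so choosing $\bar c$ small enough (depending only on $c$) makes $e^{-cN} \le \tfrac12 (C_1 t)^{2d - 1}$. The main conceptual content of the lemma is thus the exponent $2d - 1$, which comes from the full dimension of the projection $P_E$; a naïve application to the $d$-term sum $\sum_{k \in J} z_k X_k$ would give only $(C_1 t)^d$, which would be insufficient for the subsequent $\e$-net argument in Section~\ref{s: uniform distance}. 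The main obstacle is essentially bookkeeping---verifying that the decoupling between $(X_k)_{k \in J}$ and $H_{J^c}$ fits the hypotheses of Theorem~\ref{distance} precisely, and tracking the interplay of the two natural scalings $\sqrt{d}$ (from the statement) and $\sqrt{2d-1}$ (from the codimension).
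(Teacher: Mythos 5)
Your proof is correct and follows essentially the same route as the paper's: apply the Distance Theorem~\ref{distance} to the subgaussian vector $Az$ (restricted/normalized to its coordinates in $J$) and the independent random subspace $H_{J^c}$ of codimension $m = N-(n-d) = 2d-1$, then absorb the $e^{-cN}$ term using \eqref{tzero}. Your additional bookkeeping --- the projection identity, the rescaling by $s=\|P_J z\|_2$, and the explicit absorption of $e^{-cN}$ --- merely spells out steps the paper leaves implicit, and since the lemma is only ever applied to $z \in \Spread_J \subseteq S(\R^J)$, your case $s=1$ covers everything that is needed.
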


\begin{proof}
Denote the entries of matrix $A$ by $\xi_{ij}$.
Then the entries of the random vector $Az$,
$$
\zeta_i := (Az)_i = \sum_{j=1}^n \xi_{ij} z_j, \quad j = 1,\ldots, N,
$$
are independent and identically distributed mean zero random variables.
Moreover, since the random variables $\xi_{ij}$ are subgaussian
and $\sum_{j=1}^n z_j^2 = 1$, the random variables $\zeta_i$
are also subgaussian (see Fact~2.1 in \cite{LPRT}).

Therefore the random vector $X = Az$ and the random subspace $H = H_{J^c}$
satisfy the assumptions of Theorem~\ref{distance} with
$m = N - (n-d) = 2d - 1$
(we used \eqref{dimH} here).
An application of Theorem~\ref{distance} completes the proof.
\end{proof}

We will use this bound for every $z$ in an $\e$-net of $\Spread_J$.
To extend the bound to the whole set $\Spread_J$ by approximation, we need a
certain stability of the distance. This is easy to quantify and prove using
the following representation of the distance in matrix form.
Let $P$ be the orthogonal projection in $\R^N$ onto $(H_{J^c})^\perp$, and let
\begin{equation}                            \label{W}
  W := P A |_{\R^J}.
\end{equation}
Then for every $v \in \R^N$, the following identity holds:
\begin{equation}                            \label{dist via W}
  \dist(Az, H_{J^c} + v) = \|Wz - w\|_2,
  \quad \text{where } w = Pv.
\end{equation}

Since $|J| = d$ and almost surely $\dim(H_{J^c})^\perp = N - (n-d) =
2d-1$, the random matrix $W$ acts as an operator from a
$d$-dimensional subspace into a $(2d-1)$-dimensional subspace.
Although the entries of $W$ are not necessarily independent, we
expect $W$ to behave as if this was the case. To this end, we
condition on the realization of the subspace $(H_{J^c})$. Now
the operator $P$ becomes a fixed projection, and the
columns of $W$ become independent random vectors.
Then $W$ satisfies a version of Proposition~\ref{norm}:

\begin{proposition}                         \label{norm W}
  Let $P$ be an orthogonal projection in $\R^N$ of rank $d$
  and let $W= P A |_{\R^J}$ be a random matrix. Then
  $$
  \P \big( \|W\| > t \sqrt{d} \big)
  \le e^{-c_0 t^2 d}
  \qquad \text{for } t \ge C_0.
  $$
\end{proposition}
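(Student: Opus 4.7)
The statement is a direct analogue of Proposition~\ref{norm} (Norm), with the ambient dimension $d$ of the operator $W$ replacing the ambient dimension $N$ of $A$. The plan is simply to rerun the covering/union-bound argument from that proof, but on the two $d$-dimensional unit spheres that are intrinsic to $W$, namely $S(\R^J)$ and $S(\range P)$. Since both spheres live in a $d$-dimensional Euclidean space, nets on them are exponential in $d$, not $N$, which is precisely what the conclusion demands.

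The plan is to proceed in four steps. First, apply Proposition~\ref{nets} to choose a $(1/2)$-net $\NN$ of $S(\R^J)$ and a $(1/2)$-net $\MM$ of $S(\range P)$, both of cardinality at most $2d\cdot 5^{d-1}\le 6^d$. Second, for fixed $x\in \NN$ and $y\in \MM$, observe the identity
\[
\pr{Wx}{y}=\pr{PA\widehat{x}}{y}=\pr{A\widehat{x}}{Py}=\pr{A\widehat{x}}{y},
\]
where $\widehat{x}\in\R^n$ denotes the zero extension of $x$ off $J$ and we used $y=Py$ since $y\in\range P$. Thus
\[
\pr{Wx}{y}=\sum_{i,j}A_{ij}\widehat{x}_j y_i,
\]
a linear combination of i.i.d.\ mean zero subgaussian random variables with coefficients whose squares sum to $\|\widehat{x}\|_2^2\|y\|_2^2=1$. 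By the standard stability of the subgaussian class (Fact~2.1 of \cite{LPRT}), $\pr{Wx}{y}$ is a subgaussian random variable with moment bounded by a constant depending only on $B$, so
\[
\P\bigl(|\pr{Wx}{y}|>s\sqrt d\bigr)\le C_1\exp(-c_1 s^2 d)\qquad\text{for all }s>0,
\]
with $C_1,c_1>0$ depending only on $B$.

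Third, apply Proposition~\ref{norm on nets} with $\e=\delta=1/2$ to deduce $\|W\|\le 4\sup_{x\in\NN,\,y\in\MM}|\pr{Wx}{y}|$. Fourth, a union bound over $\NN\times\MM$ and the previous display yield
\[
\P\bigl(\|W\|>4s\sqrt d\bigr)\le |\NN|\,|\MM|\,C_1\exp(-c_1 s^2 d)\le C_1\cdot 36^d\exp(-c_1 s^2 d).
\]
Choosing $s=t/4$ and then taking $t\ge C_0$ with $C_0$ large enough depending on $B$ so that $c_1 t^2/16$ dominates $2\log 6+(\log C_1)/d$, the right-hand side is bounded by $e^{-c_0 t^2 d}$, giving the claim.

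There is no real obstacle here beyond what already appears in the proof of Proposition~\ref{norm}; the only point worth noting is the need to work on the intrinsic $d$-dimensional spheres of $W$ (not on $S^{N-1}$ or $S^{n-1}$), which is what keeps the net cardinality at $6^d$ and hence produces a tail exponential in $d$ rather than in $N$.
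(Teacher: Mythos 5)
Your proof is correct and follows essentially the same route as the paper's: the paper also takes $(1/2)$-nets of $S(\R^J)$ and $S(P\R^N)$, uses the identity $\langle Wx,y\rangle=\langle Ax,y\rangle$ for $y$ in the range of $P$, and finishes exactly as in Proposition~\ref{norm}. Your write-up just fills in the details that the paper leaves as "the proof is completed as in Proposition~\ref{norm}."
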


\begin{proof}
The argument is similar to that of Proposition~\ref{norm}. Let $\NN$
be a $(1/2)$-net of $S(\R^J)$ and $M$ be a $(1/2)$-net of $S(P
\R^N)$. Note that for $x \in \NN$, $y \in \MM$, we have $\< Wx,y\> =
\< Ax,y\> $. The proof is completed as in  Proposition~\ref{norm}.
\end{proof}

Using Proposition\ref{norm W}, we can choose a constant $K_0$ that depends
only on the subgaussian moment, and such that
\begin{equation}                            \label{W small}
  \P \big( \|W\| > K_0 \sqrt{d} \big)
  \le e^{-d}.
\end{equation}

With this bound on the norm of $W$, we can run the approximation argument
and prove the distance bound in Lemma~\ref{nonuniform distance} uniformly
over all $z \in \Spread_J$.

\begin{lemma}                       \label{uniform distance W small}
  Let $W$ be a random matrix as in Proposition \ref{norm W}.
  Then for every $t$ that satisfies \eqref{tzero} we have
  \begin{equation}                  \label{eq: uniform distance W small}
    \P \Big( \inf_{z \in \Spread_J} \|Wz\|_2 < t \sqrt{d}
    \text{ and } \|W\| \le K_0 \sqrt{d} \Big)
    \le (C_2 t)^d.
  \end{equation}
\end{lemma}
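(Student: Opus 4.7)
The plan is a standard $\e$-net argument on $\Spread_J$, set up so that the gap between the single-point bound $(C_1 t)^{2d-1}$ from Lemma \ref{nonuniform distance} and the target $(C_2 t)^d$ is exactly covered by the net cardinality. The crucial observation is that $\Spread_J \subset S(\R^J) = S^{d-1}$ is a subset of a $(d-1)$-dimensional sphere, so Proposition \ref{nets} furnishes an $\e$-net $\NN$ of $\Spread_J$ of cardinality at most $2d(1+2/\e)^{d-1} \le (C/\e)^{d-1}$. This reduced exponent (noted in the paper as important) is what makes the estimate work.

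First I set $\e = t/(2K_0)$ and pick such a net $\NN$. Since $z \in \R^J$ gives $\dist(Az, H_{J^c}) = \|PAz\|_2 = \|Wz\|_2$, Lemma \ref{nonuniform distance} applied to each $z_0 \in \NN$ (with $v=0$, and $s=2t$) yields
\[
\P\big(\|Wz_0\|_2 < 2t\sqrt{d}\big) \le (2 C_1 t)^{2d-1}.
\]
A union bound over $\NN$ gives
\[
\P\Big( \min_{z_0 \in \NN}\|Wz_0\|_2 < 2t\sqrt{d} \Big)
\le \Big(\frac{C}{\e}\Big)^{d-1}(2 C_1 t)^{2d-1}
= (2 C K_0)^{d-1}(2 C_1)^{2d-1}\, t^{(2d-1)-(d-1)} \le (C_2 t)^d,
\]
since $(2d-1)-(d-1)=d$, and the constants collapse into a single exponential in $d$. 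The assumption $t \ge t_0 = e^{-\bar c N/d}$ ensures the $e^{-cN}$ term hidden in Theorem \ref{distance} (and hence in Lemma \ref{nonuniform distance}) is already absorbed into $(C_1 t)^{2d-1}$.

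Finally I transfer the bound from $\NN$ to all of $\Spread_J$ using the norm control. On the event $\{\|W\| \le K_0\sqrt{d}\}$ together with $\min_{z_0 \in \NN}\|Wz_0\|_2 \ge 2t\sqrt{d}$, any $z \in \Spread_J$ can be approximated by some $z_0 \in \NN$ with $\|z-z_0\|_2 \le \e$, and then
\[
\|Wz\|_2 \ge \|Wz_0\|_2 - \|W\|\,\|z-z_0\|_2
\ge 2t\sqrt{d} - K_0\sqrt{d}\cdot \frac{t}{2K_0}
\ge t\sqrt{d}.
\]
Hence the event in \eqref{eq: uniform distance W small} is contained in the union-bound event above, and the probability is at most $(C_2 t)^d$ as claimed. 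The only subtle point is keeping the net exponent at $d-1$; if one naively used $(C/\e)^d$ one would be short exactly one power of $t$, so the sharp form of Proposition \ref{nets} is essential.
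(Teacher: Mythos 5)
Your argument is correct and is essentially the paper's own proof: an $\e$-net of $\Spread_J$ with the sharp cardinality exponent $d-1$ from Proposition~\ref{nets}, the single-vector bound $(C_1 t)^{2d-1}$ from Lemma~\ref{nonuniform distance} via the identity $\dist(Az,H_{J^c})=\|Wz\|_2$, a union bound over the net, and approximation using $\|W\|\le K_0\sqrt{d}$ to pass from the net to all of $\Spread_J$ (the paper takes $\e=t/K_0$ and the threshold $2t\sqrt{d}$, exactly as you do up to constants). The only nit is that your intermediate bound $2d(1+2/\e)^{d-1}\le (C/\e)^{d-1}$ fails at $d=1$; the stray factor $2d$ should instead be absorbed directly into $(C_2 t)^d$, which is harmless and leaves the conclusion intact.
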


\begin{proof}
Let $\e = t/K_0$. By Proposition~\ref{nets}, there exists an $\e$-net $\NN$ of
$\Spread_J \subseteq S(\R^J)$ of cardinality
$$
|\NN| \le 2d \Big( 1 + \frac{2}{\e} \Big)^{d-1} \le 2 d \Big(
\frac{3K_0}{t} \Big)^{d-1}.
$$
Consider the event
$$
\EE := \Big\{ \inf_{z \in \NN} \|Wz\|_2 < 2 t \sqrt{d} \Big\}.
$$
Taking the union bound and using the representation \eqref{dist via W}
in Lemma~\ref{nonuniform distance}, we obtain
$$
\P(\EE) \le |\NN| \cdot \max_{z \in \NN} \P \big( \|Wz\|_2 \le
2t\sqrt{d} \big) \le 2 d \Big( \frac{3K_0}{t} \Big)^{d-1} (2 C_1
t)^{2d-1} \le (C_2 t)^d.
$$
Now, suppose the event in \eqref{eq: uniform distance W small} holds,
i.e. there exists $z' \in \Spread_J$ such that
$$
\|Wz'\|_2 < t \sqrt{d} \text{ and } \|W\| \le K_0 \sqrt{d}.
$$
Choose $z \in \NN$ such that $\|z-z'\|_2 \le \e$.
Then by the triangle inequality
$$
\|Wz\|_2 \le \|Wz'\|_2 + \|W\| \cdot \|z-z'\|_2
< t\sqrt{d} + K_0 \sqrt{d} \cdot \e
\le 2 t \sqrt{d}.
$$
Therefore, $\EE$ holds. The bound on the probability of $\EE$ completes the proof.
\end{proof}

Lemma~\ref{uniform distance W small} together with \eqref{W small} yield that
$$
\P \Big( \inf_{z \in \Spread_J} \|Wz\|_2 < t \sqrt{d} \Big)
\le (C_2 t)^d + e^{-d}.
$$
By representation \eqref{dist via W}, this is a weaker version of
Theorem~\ref{nonuniform distance}, with $e^{-d}$ instead of $e^{-cN}$.
Unfortunately, this bound is too weak for small $d$.
In particular, for square matrices we have $d=1$, and the bound is useless.

In the next section, we will refine our current approach using decoupling.

\subsection{Refinement: decoupling}               \label{s: second}
%...................................................

Our problem is that the probability bound in \eqref{W small} is too weak.
We will bypass this by decomposing our event according to all possible
values of $\|W\|$, and by decoupling the information about $\|Wz\|_2$
from the information about $\|W\|$.

\begin{proposition}[Decoupling]                 \label{decoupling}
  Let $W$ be an $N \times d$ matrix whose columns are independent
  random vectors. Let $\b > 0$ and
  let $z  \in S^{d-1}$ be a vector satisfying $|z_k| \ge
  \frac{\b}{\sqrt{d}}$ for all $k \in \{1 \etc d\}$.
  Then for every $0 < a < b$, we have
  $$
  \P \big( \|Wz\|_2 < a, \; \|W\| > b \big)
    \le 2 \sup_{x \in S^{d-1}, w \in \R^N}
      \P \Big( \|Wx-w\|_2 < \frac{\sqrt{2}}{\b} a \Big) \;
      \P \Big( \|W\| > \frac{b}{\sqrt{2}} \Big).
  $$
\end{proposition}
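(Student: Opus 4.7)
The proof is a standard decoupling argument: split the columns of $W$ into two roughly equal groups so that the operator norm cannot be large unless it is already large on one of the halves, while the lower bound $|z_k| \ge \beta/\sqrt{d}$ keeps both halves of $z$ substantial. Concretely, partition $\{1,\ldots,d\}$ into $S_1 \sqcup S_2$ with $|S_i| \approx d/2$, and let $W^{(i)}$ denote the $N \times |S_i|$ submatrix of columns indexed by $S_i$. Because $W^{(1)}$ and $W^{(2)}$ act on orthogonal coordinate subspaces of $\R^d$, the triangle inequality plus Cauchy--Schwarz gives $\|W\|^2 \le \|W^{(1)}\|^2 + \|W^{(2)}\|^2$. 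Hence $\|W\| > b$ forces $\|W^{(i_0)}\| > b/\sqrt{2}$ for some $i_0 \in \{1,2\}$, and a union bound reduces the claim to estimating the two corresponding events separately.

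Consider the term in which $\|W^{(1)}\| > b/\sqrt{2}$. Condition on $W^{(1)}$; since the columns of $W$ are independent, $W^{(2)}$ keeps its original distribution. Writing $Wz = W^{(1)}(z|_{S_1}) + W^{(2)}(z|_{S_2})$, the event $\|Wz\|_2 < a$ becomes $\|W^{(2)}(z|_{S_2}) - v\|_2 < a$, where $v := -W^{(1)}(z|_{S_1})$ is $W^{(1)}$-measurable. Normalize: let $\tilde y \in S^{d-1}$ be $z|_{S_2}/\|z|_{S_2}\|_2$ extended by zeros on $S_1$, so that $W\tilde y = W^{(2)}(z|_{S_2})/\|z|_{S_2}\|_2$. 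The coordinate lower bound yields $\|z|_{S_2}\|_2^2 \ge |S_2| \beta^2/d \ge \beta^2/2$ (the odd-$d$ case is handled identically, with at most a negligible adjustment). Therefore
\[
  \P(\|Wz\|_2 < a \mid W^{(1)})
  \le \sup_{w \in \R^N} \P\bigl(\|W\tilde y - w\|_2 < a/\|z|_{S_2}\|_2\bigr)
  \le \sup_{x \in S^{d-1},\, w \in \R^N} \P\bigl(\|Wx - w\|_2 < \tfrac{\sqrt{2}}{\beta} a\bigr).
\]
Taking expectation over $W^{(1)}$ and using $\|W^{(1)}\| \le \|W\|$,
\[
  \P\bigl(\|Wz\|_2 < a,\; \|W^{(1)}\| > b/\sqrt{2}\bigr)
  \le \sup_{x,w} \P\bigl(\|Wx - w\|_2 < \tfrac{\sqrt{2}}{\beta} a\bigr) \cdot \P\bigl(\|W\| > b/\sqrt{2}\bigr).
\]

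The symmetric argument (conditioning on $W^{(2)}$) produces the identical bound for the event $\{\|W^{(2)}\| > b/\sqrt{2}\}$, and summing the two contributions delivers the advertised factor of $2$. No part of this is technically deep: the main point is to set up the conditioning so that genuine independence of the two column-groups is used, and to notice that $W^{(2)}(z|_{S_2})$ is, up to the scalar $\|z|_{S_2}\|_2$, exactly $Wx$ for a unit vector $x$ supported on $S_2$. The only place where care is needed is the lower bound $\|z|_{S_i}\|_2 \ge \beta/\sqrt{2}$ for both $i$, which is precisely what the hypothesis $|z_k| \ge \beta/\sqrt{d}$ guarantees once the partition is chosen of equal (or nearly equal) size.
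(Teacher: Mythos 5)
Your argument is essentially the paper's own proof: the same split of the column index set into two (nearly) equal halves, the bound $\|W\|^2 \le \|W^{(1)}\|^2 + \|W^{(2)}\|^2$ followed by a union bound, conditioning on one half and using independence of the columns to produce $\sup_{x,w}\P(\|Wx-w\|_2 < \sqrt{2}a/\b)$ via the normalization $\|z|_{S_i}\|_2 \ge \b/\sqrt{2}$, with the factor $2$ coming from the two symmetric terms. The only (trivial) omission is the case $d=1$, where the partition is unavailable and the paper simply notes that $\|W\|=\|Wz\|_2$ makes the event empty since $a<b$; your gloss on the odd-$d$ constant matches the paper's own level of detail.
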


\begin{proof}
If $d=1$ then $\|W\| = \|Wz\|_2$, so the probability in the left hand side is zero.
So, let $d \ge 2$. Then we can decompose the index set $\{1,\ldots,n\}$
into two disjoint subsets $I$ and $H$ whose cardinalities differ by at most $1$,
say with $|I| = \lceil d/2 \rceil$.

We write $W = W_I + W_H$ where $W_I$ and $W_H$ are the submatrices
of $W$ with columns in $I$ and $H$ respectively. Similarly, for $z
\in \Spread_J$, we write $z = z_I + z_H$.

Since $\|W\|^2 \le \|W_I\|^2 + \|W_H\|^2$, we have
$$
\P \big( \|Wz\|_2 < a, \; \|W\| > b \big) = p_I + p_H,
$$
where
\begin{align*}
p_I &= \P \big( \|Wz\|_2 < a, \; \|W_H\| > b/\sqrt{2} \big) \\
    &= \P \big( \|Wz\|_2 < a \; \big| \; \|W_H\| > b/\sqrt{2} \big)
      \; \P \big( \|W_H\| > b/\sqrt{2} \big),
\end{align*}
and similarly for $p_H$. It suffices to bound $p_I$; the argument
for $p_H$ is similar.

Writing $Wz = W_I z_I + W_H z_H$ and using the independence of the
matrices $W_I$ and $W_H$, we conclude that
\begin{align}
p_I &\le \sup_{w \in \R^N} \P \big( \|W_I z_I - w\|_2 < a \big)
      \; \P \big( \|W_H\| > b/\sqrt{2} \big) \nonumber \\
    &\le \sup_{w \in \R^N} \P \big( \|W z_I - w\|_2 < a \big)
      \; \P \big( \|W\| > b/\sqrt{2} \big).             \label{decoupled}
\end{align}
(In the last line we used $W_I z_I = W z_I$ and $\|W_H\| \le \|W\|$).

By the assumption on $z$ and since $|I| \ge d/2$, we have
$$
\|z_I\|_2 = \Big( \sum_{k \in I} |z_k|^2 \Big)^{1/2} \ge \frac{\b}{\sqrt{2}}.
$$
Hence for $x := z_I / \|z_I\|_2$ and $u := w / \|z_I\|_2$, we obtain
$$
\P \big( \|W z_I - w\|_2 < a \big)
\le \P \big( \|W x - u\|_2 < \sqrt{2} a / \b \big).
$$
Together with \eqref{decoupled}, this completes the proof.
\end{proof}

We use this decoupling in the following refinement of Lemma~\ref{uniform distance W small}.

\begin{lemma}                       \label{uniform distance W}
  Let $W$ be a random matrix as in \eqref{W}, where $P$ is the orthogonal projection
  of $\R^N$ onto the random subspace $(H_{J^c})^\perp$, defined as
  in Theorem \ref{uniform distance}.
  Then for every $s \ge 1$ and every $t$ that satisfies \eqref{tzero}, we have
  \begin{align}                  \label{eq: uniform distance W}
    &\P \Big( \inf_{z \in \Spread_J} \|Wz\|_2 < t \sqrt{d}
    \text{ and } s K_0 \sqrt{d} < \|W\| \le 2 s K_0 \sqrt{d} \Big)
    \\
    &\le (C_3 t e^{-c_3 s^2})^d +e^{-cN}. \notag
  \end{align}
\end{lemma}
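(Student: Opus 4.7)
The plan is to refine Lemma~\ref{uniform distance W small} by combining a finer $\e$-net with the decoupling of Proposition~\ref{decoupling}. The constraint $sK_0\sqrt{d} < \|W\| \le 2sK_0\sqrt{d}$ is a large-norm event whose probability is exponentially small in $s^2 d$, and decoupling lets us extract this gain from the joint event.

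First I construct an $\e$-net $\NN$ of $\Spread_J$ with $\e = t/(2sK_0)$; by Proposition~\ref{nets} we may take $|\NN| \le (CsK_0/t)^d$. If the event in \eqref{eq: uniform distance W} occurs with some witness $z' \in \Spread_J$, then by the triangle inequality and $\|W\| \le 2sK_0\sqrt{d}$, any approximant $z \in \NN$ of $z'$ satisfies $\|Wz\|_2 < 2t\sqrt{d}$ while still $\|W\| > sK_0\sqrt{d}$. Hence it suffices to estimate $\P\!\left(\exists z \in \NN:\; \|Wz\|_2 < 2t\sqrt{d},\; \|W\| > sK_0\sqrt{d}\right)$.

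For each fixed $z \in \NN$ I condition on $H_{J^c}$, which makes $P$ deterministic and the columns of $W = PA|_{\R^J}$ independent. Since $z \in \Spread_J$ satisfies $|z_k| \ge K_1/\sqrt{d}$, Proposition~\ref{decoupling} with $a = 2t\sqrt{d}$, $b = sK_0\sqrt{d}$, $\b = K_1$ yields a product bound $2\, p_1(H_{J^c})\, p_2(H_{J^c})$, where $p_1 = \sup_{x,v} \P(\dist(Ax, H_{J^c}+v) < (2\sqrt{2}/K_1)\, t\sqrt{d} \mid H_{J^c})$ (using identity \eqref{dist via W}) and $p_2 = \P(\|W\| > sK_0\sqrt{d}/\sqrt{2} \mid H_{J^c})$. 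Choosing $K_0 \ge \sqrt{2}\, C_0$, Proposition~\ref{norm W} gives $p_2 \le e^{-c_0 K_0^2 s^2 d/2}$ uniformly in $H_{J^c}$. To control $p_1$, I introduce the event $\mathcal{G} = \{\LCD_{\a, c}((H_{J^c})^\perp) \ge c\sqrt{N}\, e^{cN/(2d-1)}\}$ with $\a = c\sqrt{N}$; by Theorem~\ref{LCD random}, applicable because $2d-1 < \tilde c N$ by \eqref{nd}, we have $\P(\mathcal{G}^c) \le e^{-cN}$. On $\mathcal{G}$, Theorem~\ref{distance general} applied conditionally with $m = 2d-1$ and $\g = c$ yields $p_1 \le (C_1 t)^{2d-1}$; the threshold condition $\e \ge \sqrt{m}/\LCD$ and the absorption of the subleading $C^m e^{-c\a^2}$ tail are both guaranteed by \eqref{tzero} for sufficiently small $\bar c$.

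A union bound over $\NN$, on the event $\mathcal{G}$, then yields a total probability at most $(CsK_0/t)^d \cdot 2(C_1 t)^{2d-1} e^{-c_0 K_0^2 s^2 d/2} \le (C's t)^d e^{-c'' s^2 d}$. Choosing $K_0$ large enough that $c_0 K_0^2/2 > 1/(2e)$ lets me absorb $s^d$ into the exponential, using $\ln s \le s^2/(2e)$ for $s \ge 1$, to obtain $(C_3 t e^{-c_3 s^2})^d$. Adding $\P(\mathcal{G}^c) \le e^{-cN}$ completes the bound.

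The main obstacle is the conditioning step: applying Theorem~\ref{distance general} requires a priori control of $\LCD((H_{J^c})^\perp)$, which is precisely what Theorem~\ref{LCD random} supplies, at the price of the residual $e^{-cN}$ term in the final estimate. A secondary bookkeeping issue is choosing $K_0$ large enough that the $s^d$ blow-up from the finer net is strictly dominated by the Gaussian-type norm tail $e^{-c_0 K_0^2 s^2 d/2}$ from Proposition~\ref{norm W}; this is what transforms the product $s^d e^{-c'' s^2 d}$ into the clean $e^{-c_3 s^2 d}$ that appears in the stated bound.
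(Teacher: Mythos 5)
Your argument follows the paper's proof essentially step for step: the net of $\Spread_J$ at scale $\e = t/(2sK_0)$, the approximation using $\|W\|\le 2sK_0\sqrt d$ to reduce to the net, the decoupling of Proposition~\ref{decoupling} with $\b=K_1$ applied after conditioning on $H_{J^c}$, the norm tail from Proposition~\ref{norm W}, and the control of the individual distance probability by Theorem~\ref{distance general} on the event that $\LCD_{\a,c}\big((H_{J^c})^\perp\big)$ is exponentially large, with Theorem~\ref{LCD random} contributing the additive $e^{-cN}$. All of these steps are carried out correctly and in the same way as in the paper.

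There is, however, one quantitative slip in the bookkeeping. You bound the net by $|\NN|\le (CsK_0/t)^d$, i.e.\ with exponent $d$, and then claim $(CsK_0/t)^d\cdot(C_1t)^{2d-1}e^{-c_0K_0^2s^2d/2}\le (C'st)^d e^{-c''s^2d}$. The powers of $t$ do not match: the left side is of order $t^{2d-1-d}=t^{d-1}$, and $t^{d-1}\le (Ct)^d$ fails for small $t$; nor can the missing factor of $t$ be recovered from \eqref{tzero}, since that would cost a multiplicative $e^{\bar cN/d}$, which is not absorbed by $e^{-c_3s^2d}$ (take $d=1$ and $N$ large). The repair is immediate and already in your toolkit: Proposition~\ref{nets} gives $|\NN|\le 2d\,(1+2/\e)^{d-1}\le 2d\,(6sK_0/t)^{d-1}$, with exponent $d-1$ rather than $d$ (the paper points out explicitly that this exponent is what makes the counting work), and then the product is $4d\,(6sK_0/t)^{d-1}(C_1t)^{2d-1}e^{-c's^2d}\le (C_3te^{-c_3s^2})^d$ as required, with the prefactors $d$, $s^{d-1}$ absorbed into the exponential since $s\ge 1$. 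With that correction your proof coincides with the paper's.
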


\begin{proof}
Let $\e = t / 2 s K_0$. By Proposition~\ref{nets}, there exists an $\e$-net $\NN$ of
$\Spread_J \subseteq S(\R^J)$ of cardinality
$$
|\NN| \le 2d \Big( 1 + \frac{2}{\e} \Big)^{d-1}
 \le 2 d \Big( \frac{6 s K_0}{t} \Big)^{d-1}.
$$
Consider the event
$$
\EE := \Big\{ \inf_{z \in \NN} \|Wz\|_2 < 2 t \sqrt{d}
  \text{ and } \|W\| > s K_0 \sqrt{d} \Big\}.
$$
We condition on the realization of the subspace $H_{J^c}$ as above
to make the columns of $W$ independent.
By the  definition \eqref{SJ} of $\Spread_J$, any $z \in \NN$ satisfies
the condition of the Decoupling Proposition~\ref{decoupling} with
$\b=K_1$. Taking the union bound and then using
Proposition~\ref{decoupling}, we obtain
\begin{align*}
   \P(\EE \mid H_{J^c})
   &\le |\NN| \cdot \max_{z \in \NN} \P \big( \|Wz\|_2 \le 2t\sqrt{d}
     \text{ and } \|W\| > s K_0 \sqrt{d} \mid H_{J^c} \big) \\
  &\le |\NN| \cdot 2 \max_{z \in S(\R^J), \; w \in \R^N}
    \P \Big( \|Wz- w\|_2 < \frac{\sqrt{2}}{K_1} \cdot 2t\sqrt{d} \mid H_{J^c}
    \Big) \\
    & \quad \cdot \P \Big( \|W\| > \frac{s K_0 \sqrt{d}}{\sqrt{2}} \mid H_{J^c}
    \Big).
\end{align*}
Assume now that $\LCD_{\a, c}(H_{J^c}^\perp) \ge c \sqrt{N}
e^{cN/m}$, where $\a$ and $c$ are as in Theorem~\ref{LCD random}.
Then using Proposition~\ref{norm W} and representation \eqref{dist
via W}, we conclude as in the proof of Theorem \ref{distance} that
$$
\P(\EE  \mid H_{J^c}) \le 4 d \Big( \frac{6 s K_0}{t} \Big)^{d-1}
  \cdot (C' t)^{2d-1} \cdot e^{-c's^2 d}
$$
 for any $t$ satisfying \eqref{tzero}.
Since $s \ge 1$ and $d \ge 1$, we can bound this as
$$
\P(\EE  \mid H_{J^c}) \le (C_3 t e^{-c_3 s^2})^d.
$$
 Therefore, by Theorem \ref{LCD random},
 \begin{align*}
   \P(\EE)
   &\le \P(\EE  \mid \LCD_{\a, c}(H_{J^c}^\perp) \ge c \sqrt{N}
            e^{cN/m} )
   + \P( \LCD_{\a, c}(H_{J^c}^\perp) < c \sqrt{N}  e^{cN/m} ) \\
   &\le (C_3 t e^{-c_3 s^2})^d + e^{-cN}.
 \end{align*}
Now, suppose the event in \eqref{eq: uniform distance W} holds, i.e.
there exists $z' \in \Spread_J$ such that
$$
\|Wz'\|_2 < t \sqrt{d} \text{ and } s K_0 \sqrt{d} < \|W\| \le 2 s K_0 \sqrt{d}.
$$
Choose $z \in \NN$ such that $\|z-z'\|_2 \le \e$.
Then by the triangle inequality
$$
\|Wz\|_2 \le \|Wz'\|_2 + \|W\| \cdot \|z-z'\|_2
< t\sqrt{d} + 2 s K_0 \sqrt{d} \cdot \e
\le 2 t \sqrt{d}.
$$
Therefore, $\EE$ holds. The bound on the probability of $\EE$ completes the proof.
\end{proof}

\bigskip

\begin{proof}[Proof of the Uniform Distance Theorem~\ref{uniform distance}]
Recall that, without loss of generality, we assumed that \eqref{tzero} held.
 Let $k_1$ be the smallest natural number such that
 \begin{equation}  \label{k_1}
   2^{k_1} \cdot K_0 \sqrt{d} > C_0 \sqrt{N},
 \end{equation}
 where $C_0$ and $K_0$ are constants from Lemma~\ref{norm} and
Lemma~\ref{uniform distance W} respectively.
Summing the probability estimates of Proposition~\ref{uniform distance W small} 
and Lemma~\ref{uniform distance W} for $s = 2^k$, $k=1,
\ldots, k_1$, we conclude that
\begin{align*}
  &\P \Big( \inf_{z \in \Spread_J} \|Wz\|_2 < t \sqrt{d} \Big) \\
  &\le (C_2 t)^d
   + \sum_{s = 2^k, \; k=1, \ldots, k_1}
       \Big ( (C_3 t e^{-c_3 s^2})^d+ e^{-cN} \Big )
   + \P(\norm{W}>C_0 \sqrt{N})  \\
  &\le (C_4 t)^d + k_1 e^{-cN} + \P(\norm{A}>C_0 \sqrt{N}).
\end{align*}
 By \eqref{k_1} and Proposition~\ref{norm}, the last expression does not
 exceed $(C t)^d +  e^{-cN}$.
In view of representation \eqref{dist via W}, this completes the
proof.
\end{proof}

\section{Completion of the proof}               \label{s: completion}
%----------------------------------------------------

In Section~\ref{s: via dist}, we reduced the invertibility problem
for incompressible vectors to computing the distance between a random ellipsoid
and a random subspace. This distance was estimated in Section~\ref{s: uniform distance}.
These together lead to the following invertibility bound:

\begin{theorem}[Invertibility for incompressible vectors]  \label{t: incompressible}
  Let $\d,\rho \in (0,1)$.
  There exist $C, c > 0$ which depend only on $\d,\rho$,
  and such that the following holds.
  For every $t > 0$,
  $$
  \P \Big( \inf_{x \in \Incomp(\d,\rho)} \|Ax\|_2 < t \frac{d}{\sqrt{n}} \Big)
    \le (Ct)^{d} + e^{-cN}.
  $$
\end{theorem}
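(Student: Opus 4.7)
The plan is to combine directly the two main tools developed in the previous two sections: Lemma~\ref{l: via dist} (invertibility via distance) and Theorem~\ref{uniform distance} (uniform distance bound). I begin by fixing any $d$-element subset $J \subseteq \{1,\ldots,n\}$; the constants $C_1, c_1$ of Lemma~\ref{l: via dist} and $C, c$ of Theorem~\ref{uniform distance} depend only on $\d, \rho$ and the subgaussian moment, not on the choice of $J$. Lemma~\ref{l: via dist} reduces the event $\{\inf_{x \in \Incomp(\d,\rho)} \|Ax\|_2 < c_1 \e \sqrt{d/n}\}$ to the distance event $\{\inf_{z \in \Spread_J} \dist(Az, H_{J^c}) < \e\}$, at the cost of a multiplicative factor $C_1^d$.

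The next step is to match parameters. Applying Theorem~\ref{uniform distance} with $\e = t_0 \sqrt{d}$, the threshold in the invertibility bound becomes $c_1 t_0 \sqrt{d} \cdot \sqrt{d/n} = c_1 t_0 \, d/\sqrt{n}$, so chaining the two inequalities gives
$$
\P \Big( \inf_{x \in \Incomp(\d,\rho)} \|Ax\|_2 < c_1 t_0 \frac{d}{\sqrt{n}} \Big)
\le C_1^d \bigl( (C t_0)^d + e^{-cN} \bigr)
= (C C_1 t_0)^d + C_1^d \, e^{-cN}.
$$
Rescaling $t = c_1 t_0$ produces $(C' t)^d$ as the first summand, which matches the form claimed.

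The only nontrivial point, and the main thing to check, is that the extra factor $C_1^d$ does not spoil the exponential bound in the second summand. Here I rely on the reductions made at the start of Section~\ref{s: decomposition}: we may assume $1 \le d \le c_0 n$ for a small constant $c_0$ depending only on the subgaussian moment (values of $d$ comparable to $n$ are already covered by \eqref{constant proportion}). Combined with $N = n-1+d$, this gives $d \le c_0' N$ for a small $c_0'$. By choosing $c_0$ (and hence $c_0'$) sufficiently small, depending also on $\d,\rho$, we can ensure $c_0' \log C_1 \le c/2$, so that
$$
C_1^d \, e^{-cN} \le e^{(c/2) N} \, e^{-cN} = e^{-cN/2}.
$$
Relabeling the constant in the exponent finishes the proof.

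In summary, this theorem is essentially a bookkeeping step: the real work was already done in Lemma~\ref{l: via dist} (where the incompressibility and total-spread arguments were carried out) and in Theorem~\ref{uniform distance} (where the decoupling was used to get an $e^{-cN}$ tail rather than $e^{-cd}$). The only obstacle is verifying that one may absorb the combinatorial factor $C_1^d$, which is harmless precisely because we have restricted to the regime $d \le c_0 n$ where Theorem~\ref{t: subgaussian} is not yet known.
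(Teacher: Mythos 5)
Your proposal is correct and follows essentially the same route as the paper: apply Lemma~\ref{l: via dist} with $\e = t\sqrt{d}$, plug in Theorem~\ref{uniform distance}, and rescale the constant. The one point the paper leaves implicit---absorbing the factor $C_1^d$ into the $e^{-cN}$ term using the standing reduction $1 \le d \le c_0 n$ from \eqref{nd} (with $c_0$ chosen small enough relative to the constants, which is legitimate since the regime $d > c_0 n$ is already covered by \eqref{constant proportion})---is exactly what you verify, and you do so correctly.
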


\begin{proof}
Without loss of generality, we can assume that \eqref{tzero} holds.
We use Lemma~\ref{l: via dist} with $\e = t\sqrt{d}$ and then
Theorem~\ref{uniform distance} to get the bound $(C't)^d$ on the
desired probability. This completes the proof.
\end{proof}

\begin{proof} [Proof of Theorem \ref{t: subgaussian}]
  This follows directly from \eqref{two terms},  \eqref{eq: compressible},
  and Theorem~\ref{t: incompressible}.
\end{proof}

{\small

}

\end{document}